\newcommand{\R}{\mathbb{R}}
\newcommand{\SSS}{\mathbb{S}}
\newcommand{\C}{\mathbb{C}}
\newcommand{\Z}{\mathbb{Z}}
\newcommand{\delcaption}[3]{\text{\footnotesize$#1$ ($#2$,\,$#3$)}}
\newcommand{\uhat}{\widehat{\mathcal{U}}^\C_+}
\newcommand{\PP}{\mathbb{P}}
\newcommand{\SL}{\operatorname{SL}}
\newcommand{\SU}{\operatorname{SU}}
\newcommand{\Ker}{\operatorname{Ker}}
\newcommand{\id}{I}
\newcommand{\bbar}{\begin{pmatrix}}
\newcommand{\ebar}{\end{pmatrix}}
\newcommand{\bdm}{\begin{displaymath}}
\newcommand{\edm}{\end{displaymath}}
\newcommand{\beq}{\begin{equation}}
\newcommand{\beqa}{\begin{eqnarray}}
\newcommand{\beqas}{\begin{eqnarray*}}
\newcommand{\eeq}{\end{equation}}
\newcommand{\eeqa}{\end{eqnarray}}
\newcommand{\eeqas}{\end{eqnarray*}}
\newcommand{\dd}{\textup{d}}
\newcommand{\calE}{\mathcal{E}}
\newcommand{\suchthat}{~|~}
\newcommand{\abs}[1]{{\lvert#1\rvert}}
\newcommand{\MINK}{{\R^{2,1}}}
\newcommand{\sym}{\mathcal{S}}
\newcommand{\Ad}{\textup{Ad}}
 \newcommand{\real}{{\mathbb R}}
\newcommand{\stilde}{\widetilde{\mathcal{S}}}
\newcommand{\hh}{\mathcal{U}}
   \newtheorem{theorem}{Theorem}[section]
   \newtheorem{proposition}[theorem]{Proposition}
   \newtheorem{corollary}[theorem]{Corollary}
   \newtheorem{lemma}[theorem]{Lemma}
   \newtheorem{definition}[theorem]{Definition}
 \theoremstyle{remark}
   \newtheorem{example}[theorem]{Example}
   \newtheorem{remark}[theorem]{Remark}
\begin{document}

\title[CMC Surfaces in Minkowski Space via Loop Groups]{Holomorphic
 Representation  of Constant Mean
Curvature Surfaces in Minkowski Space: Consequences of Non-Compactness
in Loop Group Methods}

\begin{abstract}
We give an infinite dimensional generalized Weierstrass representation 
 for spacelike
constant mean curvature (CMC) surfaces in Minkowski 3-space $\real^{2,1}$.
 The formulation
is analogous to that given by Dorfmeister, Pedit and Wu for CMC
surfaces in Euclidean space, replacing the group $SU_2$ with
 $SU_{1,1}$.  The non-compactness of the latter
group, however, means that the Iwasawa decomposition of the loop group,
used to construct the surfaces, is not global. We  prove that it
is defined on an open dense subset, after doubling the size of 
the real form $SU_{1,1}$, and prove several results concerning the behavior
of the surface as the boundary of this open set is encountered.
We then use the generalized Weierstrass representation 
to create and classify new examples of spacelike 
CMC surfaces in $\real^{2,1}$. In particular, we
classify surfaces of revolution and surfaces with screw motion 
symmetry, as well as studying another class of surfaces for which
the metric is rotationally invariant.  
\end{abstract}

\author{David Brander}
\address{Department of Mathematics, Matematiktorvet, Technical University of Denmark, DK-2800, Kgs. Lyngby, Denmark}
\email{D.Brander@mat.dtu.dk}

\author{Wayne Rossman}
\address{Department of Mathematics, Faculty of Science, Kobe University,
Japan}
\email{wayne@math.kobe-u.ac.jp}

\author{Nicholas Schmitt}
\address{GeometrieWerkstatt, Mathematisches Institut, Universit\"at 
T\"ubingen, Germany}
\email{nschmitt@mathematik.uni-tuebingen.de}

\keywords{differential geometry, surface theory, loop groups, integrable systems}

\subjclass[2000]{Primary 53C42, 14E20; Secondary 53A10, 53A35}


\maketitle

\section*{Introduction}
\subsection{Motivation}
It is well known that minimal surfaces in 
Euclidean $3$-space have a Weierstrass 
representation in terms of holomorphic functions, and that
the Gauss map of such a surface is holomorphic. For \emph{non}-minimal
constant mean curvature (CMC) surfaces, Kenmotsu \cite{kenmotsu}
showed that the Gauss map is harmonic, and gave a formula for
obtaining CMC surfaces from any such harmonic maps. 
On the other hand,
as a result of work by Pohlmeyer \cite{pohlmeyer}, Uhlenbeck \cite{Uhl}
and others, it became known that
harmonic maps from a Riemann surface into a symmetric space $G/H$ can be
lifted to holomorphic maps into the based loop group $\Omega G$,
satisfying a horizontality condition - see \cite{guest} for the history.  
Subsequently, Dorfmeister, Pedit
and Wu \cite{DorPW} gave a method, the so-called DPW method,
for obtaining such harmonic maps
directly from a certain holomorphic map into the complexified loop
group $\Lambda G^\C$, via the Iwasawa splitting of this group,
$\Lambda G^\C = \Omega G \cdot \Lambda ^+ G^\C$. This method has the advantage 
that the holomorphic loop group map
itself is obtained from a collection of arbitrary complex-valued holomorphic functions. Combined  with the Sym-Bobenko formula, discussed below, for obtaining a 
surface from its loop group extended frame, this gives an infinite
 dimensional 
 ``generalized Weierstrass representation'' for CMC surfaces in terms of holomorphic functions. 

Integrable systems methods have been shown to have many applications
in submanifold theory. Concerning CMC surfaces,
notable early results were the classification of CMC tori in
$\R^3$ by Pinkall and Sterling \cite{pinkallsterling}, and the 
rendering of all CMC tori in space forms in terms of theta functions by
Bobenko \cite{bobenko}.
The DPW method has led to new examples of 
non-simply-connected CMC surfaces in $\R^3$ - and other space 
forms - that have not yet been proven to exist by any other 
approach  \cite{KKRS}, \cite{KMS}, 
\cite{SKKR}.

Unsurprisingly, an analogous construction is obtained for spacelike, 
which is to say Riemannian, CMC surfaces in Minkowski space $\R^{2,1}$,
by replacing the group $SU_2$, used in the Euclidean case,
with the non-compact real form $SU_{1,1}$. However, there is a
major difference, in that the Iwasawa decomposition is not
global when the underlying group is non-compact, which has 
consequences for the global properties of the surfaces constructed.

There is already an extensive collection of work about spacelike 
non-maximal CMC 
surfaces in $\R^{2,1}$ and their harmonic (\cite{milnor})
Gauss maps.  
 Works of Treibergs 
\cite{Trei1}, Wan \cite{wan},  and Wan-Au \cite{wan2} 
 show existence of a large class of 
entire examples, which are then necessarily complete (Cheng and Yau 
\cite{cheng-yau}). Other studies, also without the loop group point
of view, include \cite{treibergschoi} and \cite{akutagawanishikawa}.
Inoguchi \cite{inoguchi} gave a loop group formulation and discussed
 finite type solutions and solutions obtained via dressing, which are
 two further methods, distinct from the DPW method employed here,
  that can also be  used for loop group type problems. 

Studying the generalized Weierstrass representation for CMC surfaces in $\real^{2,1}$
 is  interesting for various reasons: from the
viewpoint of surface theory, there is naturally a richer variety
of such surfaces, compared to the Euclidean case,
 due to the fact that not all directions are the
same in Minkowski space. 
 CMC surfaces in Minkowski space are important
in the study of classical relativity - see for example, the work of
Bartnik and Simon \cite{bartniksimon, bartnikacta}.
 The main issue addressed in those works was to give conditions which
would guarantee that surfaces obtained from a variational problem
are everywhere spacelike. The holomorphic representation studied
here is a completely different approach:
 all surfaces are, in principle,
obtained from this method and 
the surface is guaranteed
to be spacelike as long as the holomorphic loop group map takes its
values in an open dense subset of the loop group (the ``big cell'').
The surface fails to be spacelike or immersed
only when the corresponding holomorphic data encounters 
 the boundary of this dense set.
   Since
all CMC surfaces have such a representation,
understanding the behavior at this boundary potentially
gives a means to characterize the singularities.
More generally in the context of integrable systems
in geometry, this example can be thought of as a test case regarding the 
significance of the absence of a global Iwasawa decomposition,
or, more broadly, of the non-compactness of the group.

\subsection{Results}
In Sections \ref{section1-wayne} and \ref{twistediwasawasection} we present the Iwasawa decomposition
 associated to the group of loops in $SU_{1,1}$.
The general case for non-compact groups had been earlier treated
by Kellersch \cite{Kell}; we provide a rather explicit
proof for our case.  
 The main new result here, which is important for
our applications, is that, after doubling  the size of the group,
by setting $G=  SU_{1,1} \, \sqcup \, i\sigma_1 \cdot SU_{1,1} $, 
where $\sigma_1$ is a Pauli matrix,
 we are able to prove that the Iwasawa splitting we need
is almost global. That is, if $\Lambda G^\C$ is the group of loops in
a complexification $G^\C$ of $G$,
 $\Lambda ^+ G^\C$ is the subgroup of loops
which extend holomorphically to the unit disc, and $\Omega G$ is the subgroup
of based loops mapping 1 to the identity, then 
\beq \label{Iwasawa1}
\Omega G \cdot \Lambda ^+ G^\C
\eeq
is an open dense subset, called the \emph{(Iwasawa) big cell}, of 
$\Lambda G^\C$. We are primarily interested in this result
in the twisted setting, described in Section \ref{twistediwasawasection}.

We also prove, in Section \ref{nickssection}, 
that, for a loop which extends meromorphically to the unit disk with exactly one
pole,  the Iwasawa decomposition can be computed explicitly using finite linear 
algebra.  This result is used for the analysis of singularities arising in CMC
surfaces.

In Section \ref{section2} we give the loop group formulation and
the DPW method for CMC surfaces in Minkowski space. 
This uses the first factor $F$ of the decomposition  $\phi = F B$,
corresponding to (\ref{Iwasawa1}),
to obtain a CMC surface from a certain
holomorphic map $\phi: \Sigma \to \Lambda G^\C$, where $\Sigma$ is a
Riemann surface. 

In Section \ref{brander-section} we examine the behavior of the
surfaces  at the boundary of the big cell.  In Theorem \ref{summarythm1}, we prove that the DPW
construction maps an open dense set $\Sigma^\circ \subset \Sigma$
to a smooth CMC surface, and that the singular set, $\Sigma \setminus \Sigma^\circ$
is locally given as the zero set of a non-constant real analytic function.

The boundary of the big cell 
is a countable disjoint union of ``small cells'', the first two of which are
of lowest codimension in the loop group, and therefore the 
most significant. We examine the behavior of the surface as points on 
the set  $\Sigma \setminus \Sigma^\circ$ which correspond to
the first two small cells are approached. In Theorem \ref{summarythm},
we prove that the surface always has finite singularities
at points which are mapped by $\phi$ to the first small cell 
(and this also occurs 
along the zero set of a non-constant real analytic function).
On the other hand, we prove that, as points mapping to the second small cell
are approached, the surface  is always unbounded and the metric blows up.

The next two sections are devoted to applications. 
There are a variety of CMC rotational surfaces 
in $\real^{2,1}$, because the rotation axes can be either timelike or spacelike or 
lightlike.  Classifications of such rotational surfaces were considered by Hano and 
Nomizu 
\cite{HanoNom} and Ishihara and 
Hara \cite{HI}, with the aim of studying 
rolling curve constructions for the profile curves, but the moduli
space was not considered.  Here we find the moduli spaces for both surfaces
of revolution and the more general class of equivariant surfaces.
In Section \ref{section4-wayne}, we 
explicitly construct and classify all spacelike CMC surfaces of revolution in 
$\R^{2,1}$.  
In particular, this results in a new family of loops for which we know the 
explicit $SU_{1,1}$-Iwasawa splitting.  We also study the 
surfaces in the associate families of the CMC surfaces of revolution, 
which we prove give all spacelike CMC surfaces with screw motion symmetry
(equivariant surfaces). In both those cases, the explicit nature of
the construction can be used to study the singularities and the end 
behaviors of the surfaces.

In Section \ref{section5-wayne} we use the Weierstrass representation  
to  construct $\R^{2,1}$ analogues of 
Smyth surfaces \cite{smyth}  (surfaces whose metrics have
a rotational symmetry), and study their properties.



\section{The Iwasawa decomposition for the untwisted loop group} \label{section1-wayne}

If $G$ is a compact semisimple Lie group, then the Iwasawa decomposition
of $\Lambda G$, proved in \cite{PreS}, is 
\beq \label{psiwasawa}
\Lambda G^\C = \Omega G \cdot \Lambda^+ G,
\eeq 
where $\Omega G$ is the set of based loops $\gamma \in \Lambda G$ such that
$\gamma(1) = 1$.  For non-compact groups, this problem was investigated by
Kellersch \cite{Kell}.  An English presentation of those results can be found 
in the appendix of \cite{BD}.  Here we restrict to $SU_{1,1}$, as it is a 
representative example, and as it has applications to CMC surface theory.

\subsection{Notation and definitions} \label{notation1}
Throughout this article we will make extensive use of the Pauli  matrices
\[
 \sigma_1 := \bbar 0 & 1 \\ 1 & 0 \ebar \; , \;\; \sigma_2 :=
 \bbar 0 & -i \\ i & 0 \ebar \; , \;\; 
 \sigma_3:= \bbar 1 & 0 \\ 0 & -1 \ebar \; .
  \] 
Let $\SSS^1$  be the unit circle 
in the complex $\lambda$-plane, 
$D_+$ the open unit disk, and 
$D_- = \{\lambda \in \C \,| \,  |\lambda | > 1 \} \cup \{ \infty \}$  the 
exterior disk in $\C\PP^1$. 
 
If $G^\C$ is any complex semisimple Lie group then $\Lambda G^\C$ denotes the
Banach Lie group of maps from $\SSS^1$ into $G^\C$ with some $H^s$-topology,
$s> 1/2$. All subgroups are given the induced topology. For any subgroup 
$\mathcal{H}$ of $\Lambda G^\C$ we denote the subgroup of constant loops,
which is to say $\mathcal{H} \cap G^\C$,
 by $\mathcal{H}^0$.

For us, $G^\C$ will be the special linear group $SL_2\C$. Now the real
form $SU_{1,1}$ is the fixed point subgroup with respect to the involution
\beq  \label{gtaudef}
\tau(x) = 
\Ad_{\sigma_3} (\overline{x^t})^{-1}.
\eeq
For our application, however, it will become clear that it is convenient to 
set 
\bdm
G := \{ x \in SL_2\C ~|~ \tau (x) = \pm x \} .
\edm
As a manifold, $G$ is a disjoint union 
$SU_{1,1} \, \sqcup \, i\sigma_1 \cdot SU_{1,1}  $, and has a complexification
$G^\C = SL_2 \C$. 
It turns out that $G$ works just as well as $SU_{1,1}$ for
our application, and this choice will mean that the Iwasawa decomposition is
almost global.  We remark that an alternative way to achieve this would have
been to set $G^\C$ to be the group $\{ x \in GL(2,\C) ~|~ \det x = \pm 1 \}$,
and in this case the appropriate real form $G$ would be just the fixed point
subgroup with respect to $\tau$.

Let $\Lambda G$ denote the subgroup of $\Lambda G^\C$ consisting of
loops with values in the  subgroup $G$.  
We extend  $\tau$ to an involution of the loop group by the formula
\beqa  \label{extendtaudef}
(\tau(x))(\lambda) &:=& \tau(x(\bar{\lambda}^{-1})) \\
&=& \sigma_3 (\overline{x(\bar \lambda^{-1})}^t)^{-1} \sigma_3. \nonumber
\eeqa
Then it is easy to verify that the definition of $\Lambda G \subset \Lambda G^\C$ 
is the analogue of $G \subset G^\C$:
\beqas
 \Lambda G &=& \{x \in \Lambda G^\C ~|~ \tau(x) = 
\pm x \} \; , \\
&=& (\Lambda G^\C)_\tau \, \sqcup \, i \sigma_1 \cdot (\Lambda G^\C)_\tau  ,
\eeqas
where $(\Lambda G^\C)_\tau = \Lambda SU_{1,1}$ is the fixed point subgroup 
with respect to $\tau$.
We want a decomposition similar to the Iwasawa decomposition (\ref{psiwasawa}),
but our group $G$ is non-compact.

\subsubsection{Normalizations for the untwisted setting}
 
Let $\triangle^+$ and $\triangle^-$ denote the sets of 
$2 \times 2$ upper triangular and lower triangular matrices, 
respectively, and $\triangle^\pm_\real$ denote the subsets with the further
restriction that the diagonal components are positive and real.
 For any lie group $X$, let $\Lambda ^\pm X$ denote the subgroup
consisting of loops which extend holomorphically to $D_\pm$.
 We start by defining some further subgroups
of the untwisted loop group $\Lambda G^\C := \Lambda SL_2\C$.
Denote the centers of the interior and exterior disks, $D_\pm$,
 by $\lambda_+ := 0$ 
and $\lambda_- := \infty$.
 Set
\[ \Lambda_{\triangle}^\pm G^\C := \{ B \in \Lambda^\pm G^ \C \, | \, 
       B(\lambda_\pm) \in \triangle^\pm \} , \]
\[ \Lambda_{\real}^+G^\C := \{ B \in \Lambda^+ G^ \C \, | \, 
       B(0) \in \triangle^+_\real \} , \]
\[
\Lambda_{I}^\pm G^ \C := \{ B \in \Lambda^\pm G^ \C \, | \, 
       B(\lambda_\pm) = I \} \; , \] 

\subsection{The Birkhoff decomposition}
To obtain the corresponding results for the twisted loop
group later, we  normalize the factors in the Birkhoff
factorization theorem of \cite{PreS}, in a certain way:
\begin{theorem}\label{thm:birkhoff} 
(Birkhoff decomposition \cite{PreS})  
Any $\phi \in \Lambda G^ \C$, has a decomposition:
\[ \phi = B_-  M
 B_+ \, , \hspace{.5cm} B_\pm \in \Lambda^{\pm}_\triangle G^ \C \,
 ,  \] 
where either
\bdm
M = \bbar  \lambda^\ell &0 \\ 
    0 & \lambda^{-\ell}  \ebar, \hspace{.5cm} \textup{or} 
    \hspace{.5cm}
M= \bbar  0 & \lambda^\ell \\ -\lambda^{-\ell} &0 \ebar,
\hspace{.5cm} \ell \in \Z \; .
\edm
The middle term, $M$, is uniquely determined by $\phi$.
The big cell $\mathcal{B}^U$, where $l=0$, is 
open and dense in $\Lambda G^ \C$, and in this case 
there is a unique factorization $\phi = \hat{B}_- M_0 \hat{B}_+$,
with $\hat{B}_\pm \in \Lambda_I^\pm G^\C$ and $M_0 \in G^\C$.
 Moreover, the map
$\mathcal{B}^U \to \Lambda^{-}_I G^ \C \times G^ \C \times
\Lambda^{+}_I G^ \C$, given by $[ \phi \mapsto (\hat{B}_-,\,  M_0 , \, \hat{B}_+)]$,
is a real analytic diffeomorphism.
\end{theorem}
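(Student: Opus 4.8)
The plan is to deduce the statement from the classical (unnormalized) Birkhoff decomposition of Pressley--Segal \cite{PreS}, promoting the outer factors to the triangular normalizations at $\lambda_\pm$ by a finite-dimensional Bruhat-type argument, and handling the big cell $\mathcal{B}^U$ separately, where the statement is sharper. From \cite{PreS} I take: every $\phi \in \Lambda G^\C$ factors as $\phi = \beta_- M' \beta_+$ with $\beta_\pm \in \Lambda^\pm G^\C$ and $M' = \bbar \lambda^{\ell_0} & 0 \\ 0 & \lambda^{-\ell_0}\ebar$, $\ell_0 \geq 0$, the double coset $\Lambda^- G^\C \cdot M' \cdot \Lambda^+ G^\C$ being uniquely determined by $\phi$; and that the cell $\ell_0 = 0$ is open and dense in $\Lambda G^\C$, with a unique factorization $\phi = \gamma_- \gamma_+$, $\gamma_- \in \Lambda_I^- G^\C$, $\gamma_+ \in \Lambda^+ G^\C$, for each $\phi$ in it. (Here I fix the convention that $\lambda_- = \infty$ is the basepoint on the minus side, as in the statement.)

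Consider first the big cell. For $\phi$ in the $\ell_0 = 0$ double coset, take $\phi = \gamma_- \gamma_+$ as above and set $M_0 := \gamma_+(0) \in G^\C$, $\hat B_- := \gamma_-$, $\hat B_+ := M_0^{-1}\gamma_+$; then $\hat B_- \in \Lambda_I^- G^\C$, $\hat B_+ \in \Lambda_I^+ G^\C$, and $\hat B_- M_0 \hat B_+ = \phi$. Conversely every such triple recovers the Pressley--Segal factorization via $\gamma_- = \hat B_-$, $\gamma_+ = M_0 \hat B_+$, so the $\phi$ admitting such a factorization are exactly those in the Pressley--Segal big cell, which is the cell $\ell_0 = 0$; this is $\mathcal{B}^U$, open and dense. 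For uniqueness, if $\hat B_- M_0 \hat B_+ = \hat B_-' M_0' \hat B_+'$ with all factors so normalized, then $(\hat B_-')^{-1}\hat B_- M_0 = M_0' \hat B_+' \hat B_+^{-1}$; the left side extends holomorphically to $D_-$ and the right side to $D_+$, so this loop extends to a holomorphic map $\C\PP^1 \to G^\C$, hence is constant; evaluating the left side at $\infty$ and the right side at $0$ identifies the constant with $M_0$ and with $M_0'$, so $M_0 = M_0'$, and then $\hat B_-' = \hat B_-$, $\hat B_+' = \hat B_+$. To see that the resulting bijection is a real analytic diffeomorphism, I would apply the inverse function theorem to the multiplication map $\mu \colon \Lambda_I^- G^\C \times G^\C \times \Lambda_I^+ G^\C \to \Lambda G^\C$: it is analytic, being a composition of Banach Lie group operations; its differential at $(I,I,I)$ is the isomorphism furnished by the direct sum $\Lambda \slg_2 \C = \Lambda_I^- \slg_2\C \oplus \slg_2\C \oplus \Lambda_I^+ \slg_2\C$; and by left and right translations and conjugation by constant loops, $d\mu$ is an isomorphism at every point of the domain. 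Being also a bijection onto $\mathcal{B}^U$, $\mu$ is a diffeomorphism.

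For a general $\phi$ with $\ell_0 > 0$ I would produce the normalized factorization by modifying the outer factors of $\phi = \beta_- M' \beta_+$ within the ambiguity of the Birkhoff factorization, namely by pairs $(Q,P)$ with $Q \in \Lambda^- G^\C$, $P \in \Lambda^+ G^\C$ and $QM' = M'P$, which one computes to be a finite-dimensional group of triangular loops with polynomial off-diagonal entries of degree $\leq 2\ell_0$. Combined with the Bruhat decomposition of the constant matrices $\beta_-(\lambda_-)$ and $\beta_+(\lambda_+)$ in $SL_2\C$ --- each written as a lower-, resp.\ upper-, triangular factor times either $I$ or the Weyl element $w := \bbar 0 & 1 \\ -1 & 0 \ebar$ --- this lets one arrange $B_-(\lambda_-) \in \triangle^-$ and $B_+(\lambda_+) \in \triangle^+$. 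Following how the resulting diagonal monomial factors and a possible single factor of $w$ get absorbed into the middle term, using $w \bbar \lambda^{\ell_0} & 0 \\ 0 & \lambda^{-\ell_0}\ebar = \bbar 0 & \lambda^{-\ell_0} \\ -\lambda^{\ell_0} & 0 \ebar$ and that diagonal monomial loops commute, shows the new middle term to be exactly one of the two listed forms --- the diagonal one when no $w$ appears, the anti-diagonal one otherwise --- now with $\ell \in \Z$. (This is equivalently the Iwahori--Birkhoff, i.e.\ affine Bruhat, stratification of $\Lambda G^\C$ indexed by the affine Weyl group.) Uniqueness of $M$ then follows: if $\phi = B_- M B_+ = B_-' M' B_+'$ with all boundary normalizations, then $M$ and $M'$ lie in the same double coset by \cite{PreS}, hence among the listed representatives with $|\ell| = \ell_0$; setting $R := (B_-')^{-1} B_- \in \Lambda^- G^\C$ and $S := B_+' B_+^{-1} \in \Lambda^+ G^\C$ one has $RM = M'S$ with $R(\lambda_-) \in \triangle^-$, $S(\lambda_+) \in \triangle^+$, and an entrywise analysis --- the two analyticity conditions forcing the entries of $R$ to be constant or of bounded degree, so that $\det R = \det S = 1$ fails for every mismatched pair $(M,M')$ --- forces $M = M'$.

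The step I expect to be the main obstacle is this normalization together with the uniqueness of $M$: making explicit the ambiguity group of the Birkhoff factorization, and verifying both that the triangular conditions at $\lambda_\pm$ can always be met and that they single out exactly one of the diagonal and anti-diagonal monomial forms. The big-cell existence and uniqueness, by contrast, reduce quickly to the Pressley--Segal big-cell factorization and Liouville's theorem, and the real analyticity is then a formal inverse-function-theorem argument.
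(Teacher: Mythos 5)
Your proposal follows the same route as the paper: quote the Pressley--Segal Birkhoff factorization (Theorems 8.1.2 and 8.7.2 of \cite{PreS}) and then renormalize the outer factors to be triangular at $\lambda_\pm$, allowing the middle term to become off-diagonal. The paper compresses the normalization, the uniqueness of $M$, and the big-cell diffeomorphism into one sentence and a citation, whereas you carry them out explicitly (ambiguity group of the factorization, Bruhat decomposition of the constant terms, Liouville on the big cell, entrywise determinant analysis for uniqueness of $M$); these details are correct.
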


\begin{proof}
The result is stated and proved in an alternative form as 
Theorem 8.1.2 and Theorem 8.7.2 of \cite{PreS}, 
without the upper and lower triangular normalization of the
constant terms, and where the middle term, $M$, is a homomorphism
from $\SSS^1$ into a maximal torus, which is to say the first type
of middle term here. That is 
\bdm
\phi =  \phi_-  \bbar \lambda^l &0\\ 0 & \lambda^{-l} \ebar  \phi_+,
\hspace{1.5cm} \phi_\pm \in \Lambda^\pm G^\C.
\edm

Such a product can be manipulated so that the constant terms of $\phi_\pm$ 
are appropriately triangular if one allows the middle term to 
become off-diagonal.
\end{proof}

\subsection{The untwisted Iwasawa decomposition for $G$}

Define the untwisted Iwasawa big cell 
\bdm
\mathcal{B}_{1,1}^U := \{ \phi \in \Lambda G^\C ~|~ (\tau(\phi))^{-1}\phi \in \mathcal{B}^U \}.
\edm
\begin{theorem}\label{thm:SU11Iwa} (Untwisted 
$SU_{1,1}$ Iwasawa decomposition) 
\begin{enumerate}
\item 
The group $\Lambda G^\C$ is a disjoint union, 
\bdm
\mathcal{B}_{1,1}^U \sqcup \bigsqcup_{m \in \Z } \widehat{\mathcal{P}}_m,
\edm
where $\widehat{\mathcal{P}}_m$ are defined below at item (3).
\item 
Any element $\phi \in \mathcal{B}_{1,1}^U$ has a decomposition
\bdm
\phi = F  B, \hspace{1.5cm} 
F \in \Lambda G, \hspace{.5cm} B \in \Lambda^+_\triangle G^ \C.
\edm 
We can choose $B \in \Lambda^+_\real G^\C$,
 and then  $F$ and $B$ are uniquely determined, and 
 the product map
$\Lambda G   \times \Lambda^+_\real G^\C \to \mathcal{B}_{1,1}^U$ is a 
real analytic diffeomorphism. We call this unique decomposition \emph{normalized}.
\item 
Any element $\phi \in \widehat{\mathcal{P}}_m$ can be expressed as 
\bdm
\phi = F \hat \omega_m B, \hspace{1.5cm} 
F \in (\Lambda G^\C) _\tau, \hspace{.5cm} B \in \Lambda^+_\triangle G^ \C,
\edm 
where 
\bdm
\hat \omega_m := \begin{pmatrix}
\frac{1}{2} & \lambda^m \\ -\frac{1}{2} \lambda^{-m} & 1
\end{pmatrix}.
\edm
\item
The Iwasawa big cell $\mathcal{B}^U_{1,1}$ is an open dense set of $\Lambda G^\C$.
The complement of the big cell is locally given as the zero set of a non-constant
real analytic function $g: \Lambda G^\C \to \C$.
\end{enumerate}
\end{theorem}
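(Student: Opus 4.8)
The plan is to reduce the entire statement to the Birkhoff factorization of Theorem~\ref{thm:birkhoff} via the substitution $\chi:=(\tau(\phi))^{-1}\phi$. First I would record three elementary facts: $\tau$ is a group homomorphism of $\Lambda G^\C$ with $\tau^2=\id$; $\tau$ interchanges $\Lambda^+G^\C$ and $\Lambda^-G^\C$, since $\lambda\mapsto\bar\lambda^{-1}$ swaps $D_+$ and $D_-$; and $\tau(\chi)=\chi^{-1}$ for this $\chi$. Applying Theorem~\ref{thm:birkhoff} to $\chi$, write $\chi=B_-MB_+$ with $M$ the unique middle term. Then $\tau(\chi)=\tau(B_-)\tau(M)\tau(B_+)$ exhibits $\chi^{-1}$ as a product of the type $\Lambda^+G^\C\cdot\tau(M)\cdot\Lambda^-G^\C$, whereas $\chi^{-1}=B_+^{-1}M^{-1}B_-^{-1}$ is of the type $\Lambda^+G^\C\cdot M^{-1}\cdot\Lambda^-G^\C$; comparing middle terms in the mirror form of Theorem~\ref{thm:birkhoff} (which follows formally from the stated one) gives $\tau(M)=M^{-1}$. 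A one-line computation shows $\tau(M)=M$ when $M$ is diagonal and $\tau(M)=-M=M^{-1}$ when $M$ is off-diagonal, so $\tau(M)=M^{-1}$ forces $M=I$ or $M=M_m$, where $M_m:=\left(\begin{smallmatrix}0&\lambda^m\\-\lambda^{-m}&0\end{smallmatrix}\right)$, $m\in\Z$. Declaring $\phi\in\mathcal{B}_{1,1}^U$ in the first case and $\phi\in\widehat{\mathcal{P}}_m$ in the second yields the disjoint union of item~(1); and since a direct check gives $(\tau(\hat\omega_m))^{-1}\hat\omega_m=M_m$, the loop $\hat\omega_m$ is a representative of $\widehat{\mathcal{P}}_m$.

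For item~(2), given $\phi\in\mathcal{B}_{1,1}^U$, write $\chi=\hat B_-M_0\hat B_+$ in normalized form. The symmetry $\tau(\chi)=\chi^{-1}$ together with uniqueness gives $\tau(M_0)=M_0^{-1}$, which unwinds to: $\sigma_3M_0$ is Hermitian of determinant $-1$, hence of signature $(1,1)$, and its entry $(\sigma_3M_0)_{11}=(M_0)_{11}$ is real and nonzero (this last property is what characterizes the big cell). I would then solve the finite-dimensional equation $C^*\sigma_3C=\varepsilon\,\sigma_3M_0$ for $C\in SL_2\C$ upper triangular with positive real diagonal, where $\varepsilon:=\operatorname{sgn}(M_0)_{11}\in\{\pm1\}$; existence and uniqueness of such a $C$ is a Gram--Schmidt/$QR$-type argument which works precisely because the first column is $\sigma_3$-anisotropic. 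Setting $B:=C\hat B_+\in\Lambda^+_\real G^\C$ and $F:=\phi B^{-1}$, one computes $\tau(B)^{-1}B=\varepsilon\chi$ and hence $\tau(F)=\varepsilon F$, so $F\in\Lambda G$: it lies in $\Lambda SU_{1,1}$ when $\varepsilon=1$ and in the other component $i\sigma_1\cdot\Lambda SU_{1,1}$ when $\varepsilon=-1$ --- which is exactly where enlarging $SU_{1,1}$ to $G$ is forced. For uniqueness, if $\phi=FB=F'B'$ then $F^{-1}F'=B(B')^{-1}\in\Lambda G\cap\Lambda^+_\triangle G^\C$, and a Liouville argument --- an element of $\Lambda^+G^\C$ fixed up to sign by $\tau$ extends holomorphically across $\SSS^1$, hence is constant, and a constant lying in $G\cap\triangle^+_\real$ equals $I$ --- forces $F=F'$, $B=B'$. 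That the product map is a real analytic diffeomorphism then follows since it is real analytic and bijective, with inverse assembled from real analytic operations (the map $\phi\mapsto(\tau(\phi))^{-1}\phi$, the real analytic Birkhoff factorization of Theorem~\ref{thm:birkhoff}, and the explicit --- hence real analytic --- dependence of $C$ on $M_0$); alternatively, check the differential is an isomorphism and apply the inverse function theorem.

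Item~(3) is handled by the same mechanism: for $\phi\in\widehat{\mathcal{P}}_m$, the requirement $F\in(\Lambda G^\C)_\tau$ in a putative decomposition $\phi=F\hat\omega_mB$ is equivalent to $\tau(B)^{-1}M_mB=\chi$, and one produces such a $B\in\Lambda^+_\triangle G^\C$ by the analogous, if slightly more delicate, manipulation of the triangular-normalized Birkhoff factorization $\chi=B_-M_mB_+$, absorbing the constant and middle data and fixing a sign as before; only existence is asserted here, so no further normalization is needed. Finally, item~(4): openness of $\mathcal{B}_{1,1}^U$ is immediate, being the preimage of the open set $\mathcal{B}^U$ under the real analytic map $\phi\mapsto(\tau(\phi))^{-1}\phi$. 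For density and the zero-set description, one observes from the Birkhoff theory that near its generic points the complement of $\mathcal{B}^U$ in $\Lambda G^\C=\Lambda SL_2\C$ is the zero locus of a holomorphic function $h$ --- concretely $h=(M_0)_{11}$, the relevant coefficient of the normalized middle term (or a Toeplitz/Fredholm-type determinant); then $g:=h\circ(\phi\mapsto(\tau(\phi))^{-1}\phi)$ is a non-constant real analytic function on the connected manifold $\Lambda G^\C$ (non-constant because $I\in\mathcal{B}_{1,1}^U$) whose zero set is locally the complement of $\mathcal{B}_{1,1}^U$, and the zero set of such a function is nowhere dense, giving density.

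The step I expect to carry the weight is the interplay between the Birkhoff stratification and the involution $\tau$: identifying exactly which middle terms occur for $\chi=(\tau(\phi))^{-1}\phi$, and then, stratum by stratum, producing the factorization with the correct sign. The sign bookkeeping is really the crux --- it is the precise obstruction that forces the passage to the doubled group $G=SU_{1,1}\sqcup i\sigma_1\cdot SU_{1,1}$, the component of $F$ being governed by $\operatorname{sgn}(M_0)_{11}$, which is not constant on $\mathcal{B}_{1,1}^U$. A secondary technical point, needed only for the density clause of item~(4), is producing a concrete non-constant real analytic function cutting out the boundary of the big cell, which in the loop group setting means exhibiting the relevant scalar invariant rather than arguing by codimension alone.
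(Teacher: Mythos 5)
Your overall route is the same as the paper's: pass to $\chi=(\tau(\phi))^{-1}\phi$, which satisfies $\tau(\chi)=\chi^{-1}$, and sort everything by the Birkhoff middle term. Where you genuinely differ is the execution on the big cell: the paper's Lemma \ref{lem:preSU11Iwa} works throughout with the triangular-normalized factors and shows by a direct matrix computation that $B=B_+\tau(B_-)$ collapses to a real diagonal constant (giving the $\pm I$ middle term), whereas you exploit the exact uniqueness of the normalized splitting $\chi=\hat B_-M_0\hat B_+$ to get $\hat B_-=\tau(\hat B_+)^{-1}$ and $\tau(M_0)=M_0^{-1}$, and then reduce to the finite-dimensional factorization $C^*\sigma_3C=\varepsilon\,\sigma_3M_0$. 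That reduction is correct and clean, the sign $\varepsilon=\operatorname{sgn}(M_0)_{11}$ does track the component of $G$ exactly as the paper's $\hat\omega_\pm$ do, and your uniqueness (Liouville) and diffeomorphism arguments are fine and if anything more explicit than the paper's.

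Two points need attention. First, item (3) is precisely where the paper's Lemma \ref{lem:preSU11Iwa} does its real work, and your ``analogous, slightly more delicate manipulation'' does not transfer as stated: off the big cell the triangular-normalized factorization $\chi=B_-M_mB_+$ is \emph{not} unique, so the factor-comparison you used for $M_0$ is unavailable. The paper instead derives the constrained form of $B_+\tau(B_-)$ from the symmetry (constant unimodular diagonal, $bc\equiv 0$, finite Laurent tails) and then explicitly solves $x^{-1}M_m y=M_m$, $B=y^{-1}\tau(x)$ inside the stabilizer of $M_m$; some such construction must be supplied, and ``absorbing constants and fixing a sign as before'' is not yet an argument. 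Second, your parenthetical that $(M_0)_{11}\neq 0$ ``characterizes the big cell'' is correct only if $\mathcal{B}^U$ is read as the stratum with triangular-normalized middle term $I$, i.e.\ $\Lambda^-_\triangle G^\C\cdot\Lambda^+_\triangle G^\C$; it is false for the full stratum $\Lambda^-G^\C\cdot\Lambda^+G^\C$ cut out by the Toeplitz/Fredholm-type determinant you invoke in item (4) --- these are different functions cutting out different sets. Concretely, the constant $\phi=\tfrac{1}{\sqrt2}\left(\begin{smallmatrix}1&1\\-1&1\end{smallmatrix}\right)$ has $\chi=\left(\begin{smallmatrix}0&1\\-1&0\end{smallmatrix}\right)$, so the normalized factorization exists with $(M_0)_{11}=0$: such $\phi$ lies in $\widehat{\mathcal{P}}_0$ and admits no $FB$ splitting, so your Cholesky step fails there, as it must. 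Thus within the locus where $M_0$ is defined, $\{(M_0)_{11}=0\}$ \emph{is} the cell $\widehat{\mathcal{P}}_0$, and you need the small bridging fact (the constant LU criterion) identifying ``middle term $I$'' with ``$(M_0)_{11}\neq 0$''; in item (4) the local defining function of the complement must accordingly involve both the determinant-type section and $(M_0)_{11}$, since conflating them would break the disjointness claimed in item (1). (The paper's own phrase ``where $\ell=0$'' is loose on the same point; your proof simply has to adopt the $M=I$ reading consistently.)
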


The proof of Theorem \ref{thm:SU11Iwa} is a consequence of the following lemma:
\begin{lemma}\label{lem:preSU11Iwa}
If $\psi \in \Lambda G^ \C$ satisfies $(\tau(\psi))^{-1} 
= \psi$, then \[ \psi = (\tau(B_+))^{-1} (\pm I) B_+ \;\; 
\text{or} \;\;\; \psi = (\tau(B_+))^{-1} 
\bbar 0 & \lambda^m \\ -\lambda^{-m} & 0 \ebar  B_+ \] for some 
uniquely determined integer $m$, and for some 
$B_+ \in \Lambda_\triangle ^+ G^\C$.  
\end{lemma}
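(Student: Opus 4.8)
The plan is to reduce the statement to the Birkhoff decomposition (Theorem \ref{thm:birkhoff}) applied to $\psi$, exploiting the symmetry $(\tau(\psi))^{-1} = \psi$, equivalently $\tau(\psi) = \psi^{-1}$, to pin down the shape of the middle term and to produce the antisymmetric form of the factorization. First I would apply Theorem \ref{thm:birkhoff} to write $\psi = B_- M B_+$ with $B_\pm \in \Lambda^\pm_\triangle G^\C$ and $M$ one of the two normal forms (diagonal $\mathrm{diag}(\lambda^\ell,\lambda^{-\ell})$ or off-diagonal $\left(\begin{smallmatrix}0 & \lambda^\ell\\ -\lambda^{-\ell} & 0\end{smallmatrix}\right)$), with $M$ uniquely determined by $\psi$. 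Now I would compute $\tau(\psi)$ using the extended involution \eqref{extendtaudef}: since $\tau$ sends loops holomorphic on $D_+$ to loops holomorphic on $D_-$ and vice versa, and sends upper triangular constant terms to lower triangular ones (because $\tau$ on $SL_2\C$ is $x \mapsto \Ad_{\sigma_3}(\overline{x^t})^{-1}$, which preserves the Borel structure up to transpose-inverse), we get $\tau(\psi) = \tau(B_+)\,\tau(M)\,\tau(B_-)$ with $\tau(B_+) \in \Lambda^-_\triangle G^\C$ and $\tau(B_-) \in \Lambda^+_\triangle G^\C$. A direct matrix computation shows $\tau(M) = M$ in the diagonal case and $\tau(M) = M$ in the off-diagonal case as well (one checks $\sigma_3$-conjugation and transpose-inverse fix both normal forms, using that $\overline{\lambda^m} = \lambda^m$ on $\SSS^1$ after the $\bar\lambda^{-1}$ substitution).

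Next I would invoke uniqueness of the middle term. From $\tau(\psi) = \psi^{-1} = B_+^{-1} M^{-1} B_-^{-1}$ and the Birkhoff form of $\tau(\psi)$ just obtained, the uniqueness clause in Theorem \ref{thm:birkhoff} forces the middle term of $\psi^{-1}$ to equal $M^{-1}$; but the middle term of $\psi^{-1}$ read off from $\tau(\psi) = \tau(B_+)\tau(M)\tau(B_-)$ is $\tau(M) = M$. Hence $M = M^{-1}$, which in the diagonal case forces $\ell = 0$ so $M = \pm I$ (the constant-loop diagonal involutions), and in the off-diagonal case forces $\ell = 0$ as well, giving $M = \left(\begin{smallmatrix}0 & \lambda^0\\ -\lambda^0 & 0\end{smallmatrix}\right)$ — wait, that is not quite $M = M^{-1}$, so more care is needed here: I would instead track the integer $\ell$ directly. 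Comparing the power of $\lambda$ in the two expressions for the middle term shows the exponent is preserved under $M \mapsto \tau(M)$, and comparing with $M \mapsto M^{-1}$ (which negates the exponent) gives $\ell = -\ell$, hence $\ell = 0$ — except in the off-diagonal case the exponent need not vanish, so I relabel $\ell = m$ and keep $M = \left(\begin{smallmatrix}0 & \lambda^m\\ -\lambda^{-m} & 0\end{smallmatrix}\right)$, checking that $\tau$ and inversion act compatibly on this family. The cleanest route: in the off-diagonal case $M^{-1} = \left(\begin{smallmatrix}0 & -\lambda^m\\ \lambda^{-m} & 0\end{smallmatrix}\right)$, and $\tau(M) = \left(\begin{smallmatrix}0 & \lambda^{-m}\\ -\lambda^{m} & 0\end{smallmatrix}\right)$; setting these equal (after the sign normalization built into the Birkhoff normal form) pins $m$ and the sign simultaneously.

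Having constrained $M$, the final step is to upgrade $\psi = B_- M B_+$ to the symmetric form $\psi = (\tau(B_+))^{-1} M B_+$. By uniqueness of the Birkhoff factors once the middle term is fixed (Theorem \ref{thm:birkhoff} gives a real analytic diffeomorphism onto $\Lambda^-_I G^\C \times G^\C \times \Lambda^+_I G^\C$ on the big cell, and an analogous uniqueness of the $\Lambda^\pm_\triangle$-normalized factors up to the residual constant diagonal freedom), the equality $B_- M B_+ = \tau(B_+) \tau(M) \tau(B_-) = (\text{element of } \Lambda^-_\triangle) \cdot M \cdot (\text{element of } \Lambda^+_\triangle)$ forces $B_- = (\tau(B_+)) \cdot c^{-1}$ and $B_+ = c \cdot \tau(B_-)$ for some constant diagonal $c$, where $c$ commutes appropriately with $M$; absorbing $c$ (using $\tau(c) = c$ for the relevant $c$, which holds since $\tau$ fixes real diagonal matrices with the right determinant) yields $\psi = (\tau(B_+))^{-1} M B_+$ after renaming. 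I expect the main obstacle to be bookkeeping the constant-loop ambiguity: Theorem \ref{thm:birkhoff} only normalizes the constant terms to be triangular, not to be $I$, so on the big cell I must either restrict to the $\Lambda^\pm_I$-normalized version and carry an explicit constant $M_0 \in G^\C$ through the symmetry argument, or argue directly that the residual diagonal freedom can be chosen $\tau$-equivariantly. The uniqueness of the integer $m$ is immediate from uniqueness of the Birkhoff middle term; the uniqueness of $B_+$ itself is \emph{not} claimed and indeed fails (the residual constant freedom), which is consistent with the statement.
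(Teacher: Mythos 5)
Your starting point (Birkhoff-factor $\psi = B_- M B_+$ and exploit $\tau(\psi)=\psi^{-1}$) is the same as the paper's, but two steps go wrong. The first is repairable: $\tau$ as defined in \eqref{gtaudef}, \eqref{extendtaudef} is a group \emph{automorphism}, $\tau(xy)=\tau(x)\tau(y)$, not an anti-automorphism, so $\tau(\psi)=\tau(B_-)\tau(M)\tau(B_+)$ and hence $\psi=\tau(\psi)^{-1}=\tau(B_+)^{-1}\,\tau(M)^{-1}\,\tau(B_-)^{-1}$, which is the factorization in the standard order $\Lambda^-_\triangle\cdot(\text{middle})\cdot\Lambda^+_\triangle$ that one should compare with $B_-MB_+$; your order-reversed version is not covered by the uniqueness clause of Theorem \ref{thm:birkhoff}. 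Moreover your value of $\tau(M)$ in the off-diagonal case is incorrect: one finds $\tau(M)=\bbar 0 & -\lambda^m \\ \lambda^{-m} & 0 \ebar = M^{-1}$ identically, so the middle-term comparison imposes \emph{no} restriction on $m$ (as it must not, since every $m\in\Z$ occurs), while in the diagonal case it gives $\ell=0$ only; the sign in $\pm I$ is invisible to the Birkhoff middle term and can only come from an analysis of the constant terms.

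The serious gap is the final step, where you upgrade $\psi=B_-MB_+$ to $\psi=\tau(B_+)^{-1}MB_+$ by asserting that two Birkhoff factorizations with the same middle term have factors agreeing up to a constant diagonal $c$. That uniqueness holds only on the big cell (the $\Lambda^\pm_I$-normalized statement in Theorem \ref{thm:birkhoff}); for the off-diagonal middle terms, which are the whole point of the lemma, the stabilizer $\{(x,y)\in\Lambda^-_\triangle G^\C\times\Lambda^+_\triangle G^\C \mid x^{-1}My=M\}$ contains genuinely $\lambda$-dependent elements (for $m>0$ take $y=\bbar 1 & \beta \\ 0 & 1 \ebar$ with $\beta$ a nonconstant polynomial of degree at most $2m$ and $x=\bbar 1 & 0 \\ -\lambda^{-2m}\beta & 1 \ebar$), so factors with the same middle term need \emph{not} differ by constants, and your matching argument fails exactly where the lemma has content. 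What is actually needed -- and what the paper does -- is to set $B:=B_+\tau(B_-)\in\Lambda^+_\triangle G^\C$, translate $\tau(\psi)^{-1}=\psi$ into an explicit matrix identity for $B$, deduce its precise shape (diagonal case: $B$ constant, real, diagonal and $k=0$, whence $\pm I$; off-diagonal case: $\bar a=d$ constant with $|a|=1$, $bc\equiv 0$, and finite Laurent tails for $b$ or $c$), and then \emph{construct} $y\in\Lambda^+_\triangle G^\C$, $x\in\Lambda^-_\triangle G^\C$ with $x^{-1}My=M$ and $B=y^{-1}\tau(x)$, so that $\tilde B_+=yB_+$ gives $\psi=\tau(\tilde B_+)^{-1}M\tilde B_+$. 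This construction is the essential content of the proof and is missing from your proposal; only your last remark, that uniqueness of $m$ follows from uniqueness of the Birkhoff middle term, survives as stated.
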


\begin{proof}
Consider the two cases for the Birkhoff splitting of $\psi$ given in Theorem \ref{thm:birkhoff}. First, if
$\psi = B_- \text{diag}(\lambda^k,\lambda^{-k}) B_+$,  
then \begin{equation}\label{B-definition}
      B = B_+ \tau(B_-) =\begin{pmatrix}
      a & b \\ c & d \end{pmatrix}
     \end{equation}
is an 
element of $\Lambda_\triangle^+ G^ \C$, and the assumption that 
$(\tau(\psi))^{-1} = \psi$ is equivalent to the equation
\[
 \begin{pmatrix}
 a^* \lambda^{-k} & 
-c^* \lambda^k \\ 
-b^* \lambda^{-k} & 
d^* \lambda^k
 \end{pmatrix}
=
 \begin{pmatrix}
 a \lambda^k & b \lambda^k \\ 
c \lambda^{-k} & d \lambda^{-k}
 \end{pmatrix} \; . 
\]  It follows that $b$ and $c$ are both identically zero, 
that $a, d$ are constant and real, and that $k=0$.  So 
$B = \text{diag}(\alpha,\alpha^{-1}) (\pm I) 
  \text{diag}(\alpha,\alpha^{-1})$ for some constant 
$\alpha > 0$.  Then 
$\psi = (\tau (\psi))^{-1}= (\tau (B_+))^{-1}(\tau(B_-))^{-1} = \tau(\tilde B_+)^{-1} 
   (\pm I) \tilde B_+$, where 
$\tilde B_+ = \text{diag}(\alpha^{-1},\alpha) B_+$.  

Now consider the case  $\psi = B_- 
{\tiny{\bbar 0 & \lambda^k\\ -\lambda^{-k} & 0 \ebar}} B_+$.  Proceeding as before,
we have
\[
 \begin{pmatrix}
 a^* & 
-c^* \\ 
-b^* & 
d^*
 \end{pmatrix}
 \begin{pmatrix}
 0 & \lambda^k \\ -\lambda^{-k} & 0
 \end{pmatrix}
=
 \begin{pmatrix}
 0 & \lambda^k \\ -\lambda^{-k} & 0
 \end{pmatrix}
 \begin{pmatrix}
 a& b \\ c & d
 \end{pmatrix} \; ,
\] where $B$ is as in \eqref{B-definition}.  
It follows that $\bar a = d$ is constant and 
$|a|=1$, and $b \cdot c$ is identically zero.  Further, 
when $k<0$, then $b=0$ and  $c = c^* \lambda^{-2k}$ 
with a finite expansion in $\lambda$ of the form 
$c = c_1 \lambda^1 + ... + c_{-2k-1} \lambda^{-2k-1}$,
while, on the other hand, if $k \geq 0$, we have that
$c=0$ and 
$b=b^* \lambda^{2k}$, with
 $b = b_0 \lambda^0 + ... + b_{2k} \lambda^{2k}$.  

Setting $\tilde{B}_+ = y B_+$ and $\tilde{B}_- = B_- x^{-1}$ 
then the requirements that  $\tilde{B}_+ \in \Lambda_\triangle^+ G^\C$
and that $\psi = (\tau(\tilde{B}_+))^{-1} 
\bbar 0 & \lambda^m \\ -\lambda^{-m} & 0 \ebar  \tilde{B}_+$
 will be satisfied
if we can choose $y \in \Lambda^+_\triangle G^\C$ and 
$x \in \Lambda^-_\triangle G^\C$ with the properties:
\[
 x^{-1} \bbar 0 & \lambda^k \\ -\lambda^{-k} & 0 \ebar y = 
\bbar 0 & \lambda^k \\ -\lambda^{-k} & 0 \ebar \; , \hspace{1cm}
B = y^{-1} \tau(x). 
\]
 Set 
\[
 y =  \begin{pmatrix}
 \sqrt{a}^{-1} & y_1 \\ y_2 & \sqrt{a} 
 \end{pmatrix}
\; , \hspace{1cm}
 x^{-1} =  \begin{pmatrix}
 \sqrt{a}^{-1} & x_1 \\ x_2 & \sqrt{a}
 \end{pmatrix} \; , 
\]  
then when $k \geq 0$, we can take
$(y_1,y_2,x_1,x_2)=(-\sqrt{a} b/2,0,0,-\sqrt{a} b \lambda^{-2k}/2)$.
When $k < 0$, we take
$(y_1,y_2,x_1,x_2)=(0,-c/(2 \sqrt{a}),-c \lambda^{2k}/(2 \sqrt{a}),0)$.  
\end{proof}

\noindent \textbf{Proof of Theorem \ref{thm:SU11Iwa}}
\begin{proof}
Take any $\phi \in \Lambda G^ \C$.  Set 
$\psi := \tau(\phi)^{-1} \phi$. Then $(\tau(\psi))^{-1} 
= \psi$ and so we can apply Lemma \ref{lem:preSU11Iwa}, which 
implies that 
 \[ \psi = (\tau(B_+))^{-1} 
\tau(\hat \omega)^{-1} \hat \omega B_+ \; , \] 
where $\hat \omega$ is (uniquely) one of the following:
\bdm \hat \omega_+ = I, \hspace{1cm}
\hat \omega_- = \bbar 0 & 1 \\ -1 & 0 \ebar, \hspace{1cm} 
\hat \omega_m = \begin{pmatrix}
\tfrac{1}{2} & \lambda^m \\ -\tfrac{1}{2} \lambda^{-m} & 1
\end{pmatrix},
\edm
$m \in \Z$, and $B_+ \in \Lambda^+_\triangle G^\C$.
 To see this, 
compute that $(\tau(\hat \omega_+))^{-1} \hat \omega_+ = I$, 
$\tau(\hat \omega_-)^{-1} \hat \omega_-  = -I$ and 
$\tau(\hat \omega_m)^{-1} \hat \omega_m = 
{\tiny{\bbar 0 & \lambda^m \\ -\lambda^{-m} & 0 \ebar}}$.  

Hence 
\bdm
\phi = \hat{F} \hat \omega B_+,
\edm
 where $\hat F = \tau(\phi) 
\tau(\hat \omega B_+)^{-1}$.  Now $\psi = \tau(\hat \omega B_+)^{-1} \cdot 
\hat \omega B_+$ is equivalent to the equation $\tau(\hat F)= \hat F$, and
so $\hat F \in (\Lambda G^\C)_\tau$. 

To prove item (2) of the theorem, note that $\phi \in \mathcal{B}^U_{1,1}$
if and only if $(\tau(\phi))^{-1} \phi \in \mathcal{B}^U$, and this 
corresponds to $\hat \omega = \hat \omega_\pm$, by the construction in
Lemma \ref{lem:preSU11Iwa}.  Since $\tau (\hat \omega _\pm) = \pm \hat \omega _\pm$, 
$\phi = F  B_+$, with  $F := \hat F \hat \omega_\pm$, is the required decomposition.
The uniqueness and the diffeomorphism property follow from the corresponding 
properties on the big cell in Theorem \ref{thm:birkhoff}.

Item (3) has already been proved, and the disjointness property of item (1)
follows from the uniqueness of the middle term in the Birkhoff Theorem.

To prove item (4) note that,
by definition, $\mathcal{B}^U_{1,1} = h^{-1} (\mathcal{B}^U)$, where 
$h: \Lambda G^\C \to \Lambda G^\C$ takes $\phi \mapsto (\tau(\phi))^{-1} \phi$.
It is shown in \cite{DorPW} that the Birkhoff big cell $\mathcal{B}^U$ is given
 as the complement of the zero set of a non-trivial holomorphic section
$\mu$ (called $\tau$ in \cite{DorPW}) 
 of  the holomorphic line bundle $\psi^* Det^* \to \Lambda G^\C$,
where $\psi$ is a composition of holomorphic maps 
$\Lambda G^\C \to GL_{res}(H) \to Gr(H)$,
and $Det^* \to Gr(H)$ is the dual of the determinant line bundle.
Hence the Iwasawa big cell $\mathcal{B}^U_{1,1}$ is given as the complement of
the zero set of the section $h^* \mu$, locally represented by a real analytic
function $g: \Lambda G^\C \to \C$.  The complement of such a 
zero set is either open and dense or empty, and the big cell is not empty,
as it contains the identity.
\end{proof}

\begin{remark} A similar procedure can be used to prove the $SU_2$ Iwasawa splitting.
In that case, as a consequence of the compactness of the group,
everything is much simpler and the small cells $\hat{\mathcal{P}}_m$
do not appear.
\end{remark}


\subsection{Explicit Iwasawa factorization of Laurent loops} \label{nickssection}

Computing the Iwasawa
factorization explicitly is not possible in general.  However, if
$X \in \mathcal{B}^U_{1,1}$ extends meromorphically to
 the unit disk, with  just one pole at $\lambda = 0$,
then the Iwasawa decomposition can be computed by finite linear algebra.
To show this we will define a linear operator on a finite dimensional vector space
whose kernel corresponds to the $G$ factor of $X$.  

For $-\infty < p \le q \le \infty$, denote the
vector space of formal Laurent series by 
\[
\Lambda_{p,q} = \left\{
  \left.
    \textstyle\sum_{j=p}^q a_j\lambda^j
  \right|
    a_j\in M_{2 \times 2}\C  \right\},
\]
and let $P_{p,\,q}:\Lambda_{-\infty,\infty}\to\Lambda_{p,q}$
be the projection
\begin{equation*}
  P_{p,q}\left(\textstyle\sum_{j=-\infty}^\infty a_j\lambda^j\right)
    = \textstyle\sum_{j=p}^{q}a_j\lambda_j.
\end{equation*}
Define the anti-involution $\rho$ on $\Lambda_{-\infty,\infty}$
by, for $W \in \Lambda_{-\infty,\infty}$,
\[
(\rho{W})(\lambda) =
 \sigma_3 \overline{W\left(1/\overline{\lambda}\right)}^t \sigma_3 .
\]
Note that if $W(\lambda)$ is
an invertible matrix, then $\rho$ is the composition of 
$\tau$ with the matrix inverse operation.

For any $X \in \Lambda G^\C$, define a linear 
map
$\mathcal{L}_X: \Lambda_{-\infty,\infty}\to
   \Lambda_{-\infty,-1}\oplus \overline{\Lambda_{-\infty,-1}}\oplus \C^4$ 
by
\begin{equation}
\begin{split}
& \mathcal{L}_X(W) = \\
 &\quad\ \ %
 \left(
     P_{-\infty,-1}(W X),\ \overline{P_{-\infty,-1}(\textup{adj}(\rho W) X)},\,
    \left. \left( P_{0,0}(WX) - \overline{P_{0,0}(\textup{adj}(\rho W) X)}\right)\right|_{11},\,
 \right.\\
  &\quad\ \ %
  \left.
      \left. \left( P_{0,0}(WX) - \overline{P_{0,0}(\textup{adj}(\rho W) X)}\right) \right|_{22},\,
      P_{0,0}(WX)\Big|_{21},
      \left. \overline{P_{0,0}(\textup{adj}(\rho W) X))}\right|_{21}
  \right),\quad
\end{split}
\end{equation}
where 
$\text{adj}$ gives the adjugate matrix and
the subscripts $ij$ refer to matrix entries.
The map $\mathcal{L}_X$ is clearly complex linear.  

\begin{lemma}
\label{lem:laurent-iwasawa-kerL}
Let $n\in\Z_{\ge 0}$ and $X\in {\Lambda G^\C} \cap\Lambda_{-n,\infty}$.
Suppose $X$ lies in the big cell, and let 
$X = FB$ be its normalized $\SU_{1,1}$-Iwasawa factorization.  
Then
\begin{enumerate}
\item
\label{lem:laurent-iwasawa-kerL-1}
$\Ker\mathcal{L}_I = \C \cdot \id$ and 
$\Ker\mathcal{L}_{i\sigma_2} = \C\cdot\sigma_1$.  
\item
\label{lem:laurent-iwasawa-kerL-2}
If $F\in (\Lambda G^\C)_\tau$, then $\Ker\mathcal{L}_X = \C \cdot F^{-1}$.
\item
\label{lem:laurent-iwasawa-kerL-3}
If $F\in i \sigma_1 \cdot (\Lambda G^\C)_\tau$, then 
$\Ker\mathcal{L}_X = \C \cdot (F\sigma_3)^{-1}$.
\end{enumerate}
\end{lemma}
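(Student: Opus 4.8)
The plan is to first treat items (2) and (3) in a unified way and then extract item (1) as the special case $X = F^{-1}$ for the two constant loops $F = I$ and $F = i\sigma_1 \sigma_3^{-1} = i\sigma_2$ (using $\sigma_1\sigma_3 = -i\sigma_2$, so $(i\sigma_1\sigma_3)^{-1} = -(i\sigma_1\sigma_3) = \sigma_2\cdot(\text{scalar})$, and up to a scalar $\C\cdot(F\sigma_3)^{-1} = \C\cdot\sigma_2 \cdot \sigma_1$-type element — I will need to be careful with which Pauli matrix actually appears, matching the stated $\C\cdot\sigma_1$). So the core is: describe $\Ker\mathcal{L}_X$ in terms of the $G$-factor $F$ of the Iwasawa splitting $X = FB$.

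First I would unwind what $W\in\Ker\mathcal{L}_X$ means. Writing $X = FB$ with $B\in\Lambda^+_\real G^\C$ (so $B$ and $\mathrm{adj}(B) = B^{-1}\det B = B^{-1}$ extend holomorphically to $D_+$, i.e. lie in $\Lambda_{0,\infty}$), the first component $P_{-\infty,-1}(WX) = 0$ says that $WX$ has no strictly negative Fourier modes, i.e. $WFB \in \Lambda_{0,\infty}$, hence $WF = (WFB)B^{-1} \in \Lambda_{0,\infty}$ as well, i.e. $WF$ extends holomorphically to $D_+$. The second component says $\overline{P_{-\infty,-1}(\mathrm{adj}(\rho W)X)} = 0$, i.e. $\mathrm{adj}(\rho W)\,FB \in \Lambda_{0,\infty}$, hence $\mathrm{adj}(\rho W)\,F \in \Lambda_{0,\infty}$. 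Now apply $\rho$: since $\rho$ reverses products and, on invertible matrices, $\rho = \tau\circ(\text{inverse})$, and since $F\in\Lambda SU_{1,1}$ (or its $i\sigma_1$-coset) interacts simply with $\tau$, one computes that $\rho(\mathrm{adj}(\rho W)\,F) = \rho(F)\,\rho(\mathrm{adj}(\rho W)) = (\text{something involving }F^{-1}\text{ and }W)$. The cleanest route: set $V := WF$, so $V$ extends holomorphically to $D_+$; the second condition, after the $\rho$-manipulation and using $\tau(F) = \pm F$ (resp. $\tau(F) = \pm F$ on the other coset up to the $i\sigma_1$ twist), becomes that $\rho(V)$, equivalently $\mathrm{adj}(\rho V)$, also extends holomorphically to $D_+$ — forcing $V = WF$ to be a loop whose holomorphic extension to $D_+$ has holomorphic adjugate extension to $D_+$ too, which (since $\det$ is bounded) pins $V$ down to be \emph{constant} in $\lambda$. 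That is the heart of the argument: the two ``$P_{-\infty,-1} = 0$'' conditions together force $WF$ to be a $\lambda$-independent $2\times2$ matrix $C$.

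Then the remaining ($\C^4$-valued) components of $\mathcal{L}_X$ are exactly the linear constraints that cut the four-dimensional space of constant matrices $C$ down to a line. Concretely, with $V = WF = C$ constant, the $P_{0,0}$ terms record: $(P_{0,0}(WX) - \overline{P_{0,0}(\mathrm{adj}(\rho W)X)})|_{11} = 0$, the same at the $(2,2)$ entry, $P_{0,0}(WX)|_{21} = 0$, and $\overline{P_{0,0}(\mathrm{adj}(\rho W)X)}|_{21} = 0$. Translating these back through $X = FB$, $B(0)\in\triangle^+_\real$, and $V = C$ constant, they say that $C\cdot B(0)$ is diagonal with its $(2,2)$ entry the complex conjugate of its $(1,1)$ entry, while $\mathrm{adj}$ of the corresponding conjugate expression matches on the diagonal — i.e. $C\,B(0)\in SU_{1,1}$ essentially, equivalently $\tau(WX)^{-1}(WX)|_{\lambda=0}$ behaves like a multiple of the identity. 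Unwinding, $C\in\C\cdot F^{-1}$ (and in the $i\sigma_1$-coset case, $C\in\C\cdot\sigma_3 F^{-1} = \C\cdot(F\sigma_3)^{-1}$), since $F^{-1}$ is exactly the constant loop with $WF = $ const $= I$ and $\tau(WX) = WX$-type normalization at $\lambda = 0$; any scalar multiple works and no other constant does. Then item (1) is the instance $F = I$ (giving $\Ker = \C\cdot I$) and $F = i\sigma_2$ — checking $i\sigma_2\in\Lambda SU_{1,1}$? no: $i\sigma_2 = \begin{pmatrix}0&1\\-1&0\end{pmatrix}$, which satisfies $\tau(i\sigma_2) = i\sigma_2$, so it is in $(\Lambda G^\C)_\tau$, hence by item (2) $\Ker\mathcal{L}_{i\sigma_2} = \C\cdot(i\sigma_2)^{-1} = \C\cdot\sigma_1$ — matching the claim precisely, provided one notes $(i\sigma_2)^{-1} = -i\sigma_2\cdot(\text{as needed})$; in fact $(i\sigma_2)(i\sigma_2) = -(\sigma_2)^2 = -I$ is wrong, $(i\sigma_2)^2 = -\sigma_2^2 = -I$, so $(i\sigma_2)^{-1} = -(i\sigma_2)$, and $-i\sigma_2$ is \emph{not} $\sigma_1$; so the identification must instead come from the off-diagonal structure: I will recompute and present $(i\sigma_2)^{-1}$ against $\sigma_1$ carefully, likely the intended constant is such that $\Ker\mathcal{L}_{i\sigma_2}$ is the line through $\sigma_1$ via $i\sigma_2\cdot\sigma_1 = i(\sigma_2\sigma_1) = i(-i\sigma_3) = \sigma_3$, hmm — this entry-chasing is the routine part I will pin down in the write-up.

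The main obstacle is the second component of $\mathcal{L}_X$: correctly pushing the condition $\mathrm{adj}(\rho W)\,X\in\Lambda_{0,\infty}$ through the anti-involution $\rho$ and the splitting $X = FB$ to conclude $WF$ is constant, keeping precise track of $\tau(F) = \pm F$ across the two cosets of $G$ and of how $\mathrm{adj}$ and $\rho$ swap holomorphicity on $D_+$ versus $D_-$. Once that reduction to ``$WF$ constant'' is in hand, the four scalar conditions are a short finite linear-algebra computation showing the solution space is a line, and $F^{-1}$ (resp. $(F\sigma_3)^{-1}$) visibly lies in it, so equality follows by dimension count; item (1) drops out by specialization.
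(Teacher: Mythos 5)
Your core argument is essentially the paper's proof, just organized as a direct substitution rather than through the equivariance identities the paper records ($\Ker\mathcal{L}_{XB}=\Ker\mathcal{L}_X$ for $B\in\Lambda^+_\R G^\C$ and $\Ker\mathcal{L}_{FX}=\Ker\mathcal{L}_X\cdot F^{-1}$ for $F\in(\Lambda G^\C)_\tau$, applied to the base cases $X=I$ and $X=i\sigma_2$). Stripping off $B$ using $B^{\pm1}\in\Lambda_{0,\infty}$, using $\rho=\tau\circ(\,\cdot\,)^{-1}$ and $\tau(F)=\pm F$ to turn the second condition into $\mathrm{adj}\bigl(\rho(WF)\bigr)\in\Lambda_{0,\infty}$, concluding that $WF$ is constant, and then solving the four scalar conditions is exactly what the paper's one-line ``it follows that'' is doing. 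One caution: the reason $WF$ is constant is that $\rho(WF)$ lies in $\Lambda_{-\infty,0}\cap\Lambda_{0,\infty}=\Lambda_{0,0}$ (support of Fourier modes), not ``because $\det$ is bounded''; a loop with both factors holomorphic on $D_+$ need not be constant, so phrase this as the Liouville argument on $\C\PP^1$, not as an invertibility statement.

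The loose end you flagged does need fixing, and your diagnosis of it is off. The problem is not the computation of $(i\sigma_2)^{-1}$ but the coset: $\tau(i\sigma_2)=\sigma_3\bigl(\overline{(i\sigma_2)}^{\,t}\bigr)^{-1}\sigma_3=-i\sigma_2$, so $i\sigma_2\notin(\Lambda G^\C)_\tau$; it lies in $i\sigma_1\cdot(\Lambda G^\C)_\tau$ (indeed $i\sigma_2=(i\sigma_1)(i\sigma_3)$ with $i\sigma_3\in SU_{1,1}$). Hence the second half of item (1) is the specialization of item (3), not item (2): $\Ker\mathcal{L}_{i\sigma_2}=\C\cdot(i\sigma_2\,\sigma_3)^{-1}=\C\cdot(-\sigma_1)^{-1}=\C\cdot\sigma_1$, which matches the claim. (Equivalently, in your direct computation the sign $\tau(F)=-F$ flips the diagonal-matching condition from $p=s$ to $p=-s$, producing $C\in\C\cdot\sigma_3$ rather than $C\in\C\cdot I$, whence $W\in\C\cdot\sigma_3F^{-1}=\C\cdot(F\sigma_3)^{-1}$.) With that correction, and with the understanding that your item (1) is obtained by specializing your independently proved items (2)--(3) to $X=I$ and $X=i\sigma_2$ (so no circularity), the proof goes through.
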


\begin{proof}
Let $X\in {\Lambda G^\C} \cap\Lambda_{-n,\infty}$.
By the definition of $\mathcal{L}_X$, $W\in\Ker\mathcal{L}_X$ if and only if
for some $p,\,q\in\C$,
\bdm
P_{-\infty,0}(W X) =
  \begin{pmatrix}p & c_1 \\ 0 & q\end{pmatrix}
\;\;\; \text{and} \;\;\; 
P_{-\infty,0}(\textup{adj}(\rho W) X) =
  \begin{pmatrix}\overline{p} & c_2 \\ 0 & \overline{q}\end{pmatrix},
\edm
where $c_i \in \C$.
It follows that
\bdm
\begin{split}
&\Ker\mathcal{L}_I = \C \cdot \id\text{ and }
\Ker\mathcal{L}_{i\sigma_2} = \C\cdot\sigma_1 , \\
&\Ker\mathcal{L}_{XB} = \Ker\mathcal{L}_{X}
   \text{ for all $B\in \Lambda_{\R}^{+}G^{\C}$,}\\
&\Ker\mathcal{L}_{FX} = \Ker\mathcal{L}_{X}\cdot F^{-1}
   \text{ for all $F\in (\Lambda G^\C)_\tau$.}
\end{split}
\edm

Statement \ref{lem:laurent-iwasawa-kerL-2} follows from 
\[
\Ker\mathcal{L}_{FB} = \Ker\mathcal{L}_{\id} \cdot F^{-1} = \C\cdot F^{-1}.
\]
If $F\in i \sigma_1 \cdot (\Lambda G^\C)_\tau$, then 
$F = G i \sigma_2$ for some $G\in (\Lambda G^\C)_\tau$, and 
statement \ref{lem:laurent-iwasawa-kerL-3} follows from 
\[
\Ker\mathcal{L}_{G i \sigma_2 B} = \Ker\mathcal{L}_{i \sigma_2} 
   \cdot G^{-1} =
   \C\cdot i \sigma_1 G^{-1}
   =\C\cdot \sigma_3 F^{-1}
   =\C\cdot (F\sigma_3)^{-1}.
\]
\end{proof}
Let $X$ be a Laurent loop in the big cell, of pole order 
$n \in \Z_{\geq 0}$ at $\lambda=0$ and with no other singularities on the unit disk.
Let  $X=FB$ be the $\SU_{1,1}$-Iwasawa 
decomposition.  Then $F$ is a Laurent loop in 
$\Lambda_{-n,n}$, because $XB^{-1}=F$ has a pole of 
order $n$ at $\lambda=0$ and $\tau(F)= \pm F$.  
In fact, we have the following theorem: 

\begin{theorem}\label{thm:laurent-iwasawa-kerL}
Lemma \ref{lem:laurent-iwasawa-kerL}
provides an explicit construction of the
normalized $\SU_{1,1}$-Iwasawa 
decomposition of any 
$X\in {\mathcal{B}_{1,1}^U} \cap\Lambda_{-n,\infty}$
by finite linear methods.  In particular, let $X = F B$ be 
the $\SU_{1,1}$-Iwasawa decomposition.  Then
\begin{enumerate}
\item $F$ is a Laurent loop in $\Lambda_{-n,n}$ if and only if $X$ 
    extends meromorphically to the 
    unit disk, with pole of order $n$ at $\lambda = 0$, 
    and no other poles.  
\item In this case, the two conditions that $F \in \Lambda G$ and 
   that $B \in \Lambda^+_\real G^\C$ form an algebraic 
    system on the coefficients of $F^{-1}$ with a unique solution.
\end{enumerate}
\end{theorem}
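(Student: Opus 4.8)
The plan is to leverage Lemma~\ref{lem:laurent-iwasawa-kerL} to reduce the Iwasawa factorization to kernel computations and then triangular normalization, all in finite dimensions. First I would establish claim~(1): if $X$ extends meromorphically to the unit disk with a pole of order exactly $n$ at $\lambda=0$ and no other poles, then $X \in \Lambda_{-n,\infty}$, and since $B^{-1} \in \Lambda^+ G^\C$ is holomorphic and nonsingular on $D_+$ (including at $0$), the product $F = X B^{-1}$ has a pole of order at most $n$ at $\lambda = 0$; combined with $\tau(F) = \pm F$ — which forces the Laurent expansion of $F$ to be symmetric about $\lambda^0$ in the sense that a pole of order $k$ at $0$ entails growth of order $k$ at $\infty$ — we get $F \in \Lambda_{-n,n}$. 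The order is \emph{exactly} $n$ (not less) because $B^{-1}$ is holomorphic and invertible at $0$, so $F$ and $X$ have the same pole order there. Conversely, if $F \in \Lambda_{-n,n}$, then $X = FB$ with $B \in \Lambda^+ G^\C$ holomorphic on $D_+$ shows $X$ is meromorphic on the unit disk with a pole of order at most $n$ at $0$ and no other poles; equality of orders again follows from invertibility of $B$ at $0$.

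For claim~(2), I would argue as follows. By Lemma~\ref{lem:laurent-iwasawa-kerL}, the kernel $\Ker \mathcal{L}_X$ is the complex line spanned by $F^{-1}$ (if $F \in (\Lambda G^\C)_\tau$) or $(F\sigma_3)^{-1}$ (if $F \in i\sigma_1 \cdot (\Lambda G^\C)_\tau$). Since $X \in \Lambda_{-n,\infty}$, one checks directly from the formula for $\mathcal{L}_X$ that $\mathcal{L}_X$ restricted to the finite-dimensional subspace $\Lambda_{-n,n} \subset \Lambda_{-\infty,\infty}$ already captures the whole kernel: any $W \in \Ker \mathcal{L}_X$ must be a scalar multiple of $F^{-1}$ or $(F\sigma_3)^{-1}$, both of which lie in $\Lambda_{-n,n}$ by claim~(1) (and since $\sigma_3$ is constant). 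Therefore the kernel computation is a finite linear algebra problem: find the one-dimensional kernel of the restriction $\mathcal{L}_X|_{\Lambda_{-n,n}}$, which is a linear map between finite-dimensional spaces whose matrix entries are $\C$-linear (indeed $\C$-bilinear, since $\rho$ and complex conjugation appear) polynomial functions of the coefficients of $X$. This produces a spanning vector $V \in \Lambda_{-n,n}$; then $V = F^{-1}$ up to a nonzero complex scalar $c$ in the first case, so $F = c V^{-1}$, and $B = F^{-1} X = c^{-1} V X$.

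It remains to pin down the scalar $c$ and the case distinction via the normalization conditions $F \in \Lambda G$ and $B \in \Lambda^+_\real G^\C$. Here $F \in \Lambda G$ means $\tau(F) = \pm F$, equivalently $\rho(V^{-1}) = \pm V^{-1}$, which after clearing the inverse becomes a polynomial constraint relating $V$ and $\rho(V)$ (adjugate up to determinant); and $B \in \Lambda^+_\real G^\C$ demands that $B$ extend holomorphically to $D_+$ (automatic once $V$ spans the correct kernel line, since then $VX$ has no negative powers — this is exactly the content of $P_{-\infty,-1}(VX) = 0$ built into $\mathcal{L}_X$) and that $B(0)$ be upper triangular with positive real diagonal. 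The upper-triangularity of $B(0)$ is again encoded in $\mathcal{L}_X$ (the conditions on $P_{0,0}$), and positivity/reality of the diagonal fixes the modulus and phase of $c$, hence $c$ uniquely, and simultaneously selects which of the two cases in Lemma~\ref{lem:laurent-iwasawa-kerL} we are in (according to whether the spanning vector, suitably normalized, satisfies $\tau(F) = +F$ or $\tau(F) = -F$). Assembling these, the pair $(F,B)$ is determined as the unique solution of an algebraic system in the coefficients of $F^{-1}$, which is claim~(2).

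\textbf{Main obstacle.} The delicate point is verifying that $\mathcal{L}_X|_{\Lambda_{-n,n}}$ has kernel \emph{exactly} one-dimensional — i.e., that no spurious kernel elements are lost by restricting from $\Lambda_{-\infty,\infty}$ to $\Lambda_{-n,n}$, and that the full infinite-dimensional kernel does not exceed the expected line. The first is handled by the pole-order bookkeeping of claim~(1) applied to $F^{-1}$; the second is guaranteed by Lemma~\ref{lem:laurent-iwasawa-kerL} itself, provided $X$ is genuinely in the big cell, so that the Iwasawa factorization exists. One must also be careful that the resulting algebraic system is consistent and nondegenerate — but existence and uniqueness of the solution are not in question, since they follow from Theorem~\ref{thm:SU11Iwa}(2); the theorem only asserts that this \emph{particular} solution is cut out algebraically, which the above construction makes manifest.
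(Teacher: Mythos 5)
Your proposal is correct and follows essentially the same route as the paper: both obtain item (1) from $F = XB^{-1}$ together with $\tau(F)=\pm F$, reduce item (2) via Lemma \ref{lem:laurent-iwasawa-kerL} to computing the one-dimensional kernel of $\mathcal{L}_X$ restricted to $\Lambda_{-n,n}$ (a finite linear system), and then remove the residual ambiguity using the normalization $F\in\Lambda G$, $B\in\Lambda^+_\real G^\C$. The only difference is cosmetic: the paper fixes the kernel element by the Liouville-type observation that $\det W$ is constant (so one may set $\det W\equiv 1$) and then states that $((i\sigma_3)^k W)^{-1}$, $k\in\{0,\dots,3\}$, is the normalized factor, which is precisely the scalar-and-case adjustment (including the extra $\sigma_3$ twist needed when $F\in i\sigma_1\cdot(\Lambda G^\C)_\tau$, where the kernel spans $\C\cdot(F\sigma_3)^{-1}$ rather than $\C\cdot F^{-1}$) that you describe somewhat more loosely.
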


\begin{proof}
Compute $W\in\Ker\mathcal{L}_X\setminus\{0\}$.  This involves 
solving a complex linear system with $16n+4$ equations and 
$8n+4$ variables.  

That $\det W$ is $\lambda$-independent can be seen as follows: Since 
$W$ solves the linear system, $WX$ and $\textup{adj}(\rho W) X$ 
are in $\Lambda _{0,\infty}$, and so $\det W(\lambda)$ and 
$\det \overline{W(1/\bar \lambda)}$ are holomorphic in the 
unit disk.  In particular, $\det W(\lambda)$ is holomorphic on 
$\C \cup \{ \infty \}$, and so is constant.

Thus, multiplying by a constant scalar if necessary, we may, and do, 
assume $\det W \equiv 1$.  

By Lemma \ref{lem:laurent-iwasawa-kerL},
$((i \sigma_3)^kW)^{-1}$ is the $\Lambda G$ factor
of the
normalized $\SU_{1,1}$-Iwasawa decomposition of $X$
for some $k\in\{0,\dots,3\}$.
\end{proof}

For the simplest case, when  $X$ is a constant loop, the 
linear system in the proof of Theorem \ref{thm:laurent-iwasawa-kerL} 
gives the following corollary:

\begin{corollary}  \label{factcor}
For $X \in \SL_2 \C$, the $\SU_{1,1}$-Iwasawa decomposition 
has three cases:
\begin{enumerate}
\item When $|X_{11}|>|X_{21}|$, there exist $u,v,\beta \in \C$ 
and $r \in \R^+$ such that $u \bar u - v \bar v = 1$ and 
\[ X = 
 \begin{pmatrix} u & v \\ \bar v & \bar u \end{pmatrix}
 \begin{pmatrix} r & \beta \\ 0 & r^{-1} \end{pmatrix} \; . 
\]
\item When $|X_{11}|<|X_{21}|$, there exist $u,v,\beta \in \C$ 
and $r \in \R^+$ such that $u \bar u - v \bar v = -1$ and 
\[ X = 
 \begin{pmatrix} u & v \\ -\bar v & -\bar u \end{pmatrix}
 \begin{pmatrix} r & \beta \\ 0 & r^{-1} \end{pmatrix} \; . 
\]
\item When $|X_{11}|=|X_{21}|$, there exist $\theta,\gamma \in \C$, 
$r \in \R^+$ and $\beta \in \C$ such that 
\[ X = 
 \begin{pmatrix} e^{i \theta} & 0 \\ e^{i \gamma} & e^{-i \theta} \end{pmatrix}
 \begin{pmatrix} r & \beta \\ 0 & r^{-1} \end{pmatrix} \; . 
\]
\end{enumerate}
\end{corollary}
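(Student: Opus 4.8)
The plan is to specialize Theorem~\ref{thm:laurent-iwasawa-kerL} to the case $n=0$, where $X$ is a constant loop in $\SL_2\C$, and then unwind what the kernel computation and the trichotomy of Lemma~\ref{lem:laurent-iwasawa-kerL} say in elementary terms. First I would observe that when $X$ is constant, $W\in\Ker\mathcal{L}_X\setminus\{0\}$ can also be taken constant: the conditions $P_{-\infty,-1}(WX)=0$ and $P_{-\infty,-1}(\mathrm{adj}(\rho W)X)=0$ force the negative-power part of $W$ to vanish, and the $P_{0,0}$ conditions then pin down $W$ up to scalar, so $W\in M_{2\times 2}\C$ with $\det W$ a nonzero constant, normalized to $\det W=1$, hence $W\in\SL_2\C$. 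Writing $W^{-1}=\begin{pmatrix} a & b \\ c & d\end{pmatrix}$, the defining conditions become: $WX$ is upper triangular, i.e. $(W X)_{21}=0$, together with a reality/positivity condition on $P_{0,0}$ coming from the $\C^4$ slot of $\mathcal{L}_X$. By Lemma~\ref{lem:laurent-iwasawa-kerL}, $((i\sigma_3)^k W)^{-1}$ is the $\Lambda G$-factor $F$ for a unique $k\in\{0,1,2,3\}$, and $B = W X$ (adjusted by $(i\sigma_3)^k$) is the upper-triangular factor.

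Next I would translate the three parts of Lemma~\ref{lem:laurent-iwasawa-kerL} into the three cases of the corollary. The factor $F$ lies either in $(\Lambda G^\C)_\tau = \SU_{1,1}$ or in $i\sigma_1\cdot\SU_{1,1}$, and $\SU_{1,1}$ consists exactly of matrices $\begin{pmatrix} u & v \\ \bar v & \bar u\end{pmatrix}$ with $u\bar u - v\bar v = 1$; multiplying such a matrix by $i\sigma_2$ (which is how the proof of Lemma~\ref{lem:laurent-iwasawa-kerL}\eqref{lem:laurent-iwasawa-kerL-3} produces the second component) yields matrices of the form $\begin{pmatrix} u & v \\ -\bar v & -\bar u\end{pmatrix}$ with $u\bar u - v\bar v = -1$. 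This gives cases (1) and (2). For case (3) I would note that the borderline $|X_{11}|=|X_{21}|$ is exactly where $\det\mathcal{L}_X$ degenerates in a way that puts $X$ on the boundary between the two generic cells; there $W^{-1}$ becomes lower triangular (equivalently $b=0$ above is forced together with $a=\bar d^{-1}$ on the unit circle), so $F=\begin{pmatrix} e^{i\theta} & 0 \\ e^{i\gamma} & e^{-i\theta}\end{pmatrix}$ — note this matrix is simultaneously a limit of the $\SU_{1,1}$-type and $i\sigma_1\cdot\SU_{1,1}$-type matrices, consistent with $\hat\omega_0$ appearing in the boundary stratum.

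To pin down which case occurs in terms of $X$ itself, I would solve $(WX)_{21}=0$ directly. With $W^{-1}=\begin{pmatrix} a & b \\ c & d\end{pmatrix}$ and $\det W^{-1}=1$, the entry $(WX)_{21}=0$ reads $-c\,X_{11} + a\,X_{21}=0$ (up to the standard formula $W=\begin{pmatrix} d & -b \\ -c & a\end{pmatrix}$), so $c/a = X_{21}/X_{11}$ when $X_{11}\neq 0$. The reality constraint from the $\C^4$-slot of $\mathcal{L}_X$ forces $|a|^2 - |c|^2$ to have a definite sign: $+1$ when $F\in\SU_{1,1}$ and $-1$ when $F\in i\sigma_1\cdot\SU_{1,1}$, after the normalization $\det W\equiv 1$. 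Since $|a|^2-|c|^2$ has the same sign as $|X_{11}|^2-|X_{21}|^2$, this identifies case (1) with $|X_{11}|>|X_{21}|$, case (2) with $|X_{11}|<|X_{21}|$, and case (3) with equality; then $r$ and $\beta$ are just the diagonal and off-diagonal entries of the upper-triangular $B=WX$, with $r>0$ because $B\in\Lambda^+_\real G^\C$.

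The main obstacle I expect is bookkeeping the reality/positivity conditions carefully enough to extract the sign of $|X_{11}|^2-|X_{21}|^2$ cleanly from the $\C^4$-component of $\mathcal{L}_X$, and making sure the normalization $\det W\equiv 1$ (rather than $\det W\equiv -1$) is the one that makes $r>0$ — these are exactly the places where a sign error would swap cases (1) and (2) or break case (3). Everything else is routine: the structure of $\SU_{1,1}$, the effect of right-multiplication by $i\sigma_2$, and reading off $r,\beta$ from $WX$. I would present this as a short direct verification once the $n=0$ reduction of Theorem~\ref{thm:laurent-iwasawa-kerL} is in hand, rather than re-deriving the kernel computation from scratch.
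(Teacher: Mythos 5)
Your overall route is the same as the paper's: the paper justifies this corollary with a single sentence, namely that it is the $n=0$ specialization of the linear system in the proof of Theorem~\ref{thm:laurent-iwasawa-kerL}, and your reduction to a constant kernel element $W$, the identification of cases (1) and (2) with the two components of $G$ via Lemma~\ref{lem:laurent-iwasawa-kerL}, and the sign bookkeeping $|a|^2-|c|^2 \sim |X_{11}|^2-|X_{21}|^2$ are all correct and are exactly the intended content. For the record, the constant kernel element is (up to scale) $W=\begin{pmatrix}\bar X_{11} & -\bar X_{21}\\ -X_{21} & X_{11}\end{pmatrix}$, with $\det W=|X_{11}|^2-|X_{21}|^2$, which makes your sign correlation immediate.

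The one step that would fail as written is case (3). There $X$ is \emph{not} in the Iwasawa big cell (it lies in the stratum corresponding to $\hat\omega_0$, the constant analogue of the $\omega_1^\theta$ case of Lemma~\ref{switchlemma}), so Theorem~\ref{thm:laurent-iwasawa-kerL} does not apply; concretely, the kernel element $W$ above has $\det W=0$ precisely when $|X_{11}|=|X_{21}|$, so it cannot be normalized to $\det W\equiv 1$ and ``$W^{-1}$ becomes lower triangular'' is not meaningful — $W^{-1}$ does not exist. Note also that the $F$-factor in case (3) is not an element of $G$ at all (a lower-triangular matrix with $e^{i\gamma}\neq 0$ is never of the form required by $\tau(F)=\pm F$), which is another sign that this case sits outside the machinery you are invoking. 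The fix is trivial and should just be done directly: since $\det X=1$ forces $(X_{11},X_{21})\neq(0,0)$, write $X_{11}=re^{i\theta}$, $X_{21}=re^{i\gamma}$ with $r=|X_{11}|>0$, set $\beta=e^{-i\theta}X_{12}$, and check that the $(2,2)$ entry of the product equals $X_{22}$ using $\det X=1$. With that one-line verification substituted for your ``degenerating determinant'' heuristic, the argument is complete.
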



\section{Iwasawa factorization in the twisted loop group} \label{twistediwasawasection}

\subsection{Notation and definitions for the twisted loop group} \label{notation}

As before, we set $G = SU_{1,1} \, \sqcup \, i\sigma_1 \cdot SU_{1,1} $, but
from now on we work in  the twisted loop group
\[
\hh^\C := \Lambda G_\sigma^\C := \{ x \in \Lambda G^\C | \, 
       \sigma (x) = x \} \; , 
\]
where the involution $\sigma$ is defined, for a loop $x$, by
\bdm
(\sigma(x))(\lambda) := \Ad_{\sigma_3} \, x(-\lambda).
\edm
We will also refer to three further subgroups of $\hh^\C$, 
\beqas
\begin{aligned}
& \hh^\C_\pm :=   \{ B \in \hh^\C \, | \, 
       B \text{ extends holomorphically to } D_\pm \}, \\
& \uhat := \{ B \in \hh^\C_+ ~|~ B(0) = \tiny{\bbar \rho & 0 \\ 0 & \rho^{-1} \ebar}, ~\rho \in \real, ~\rho>0 \}. 
\end{aligned}
\eeqas 
     
We extend  $\tau$ to an involution of the loop group by the formula
\bdm
(\tau(x))(\lambda) := \tau(x(\bar{\lambda}^{-1})).
\edm
The ``real form'' is 
\beqas
 \hh &:=& \Lambda G_\sigma = \{F \in \Lambda G^\C_\sigma ~|~ \tau(F) = 
\pm F \} \; , \\
&=& \hh_\tau \, \sqcup \, \Psi(i \sigma_1) \cdot \hh_\tau,
\eeqas
where $\hh_\tau$ is the fixed point subgroup of $\tau$, and 
$\Psi(i \sigma_1) = \tiny{\bbar 0 & i \lambda \\ i \lambda^{-1} & 0 \ebar}$ 
(see (\ref{psiisom}) below).

For any Lie group $A$, let $Lie(A)$ denote its Lie algebra. 
We use the same notations $\sigma$ and $\tau$ for the infinitesimal
versions of the involutions, which are given on $Lie(\Lambda G^\C)$ by
\bdm
(\sigma (X))(\lambda) := \Ad_{\sigma_3} \, X(-\lambda), \hspace{1cm}
(\tau (X))(\lambda) := -\Ad_{\sigma_3} \overline{X^t(\bar{\lambda}^{-1})}.
\edm
We have
 $Lie(\hh^\C) = \{X = \sum X_i \lambda^i ~|~ X_i \in \mathfrak{sl}_2\C,~ \sigma(X) = X \}$, and $Lie(\hh)$ is the subalgebra consisting of elements fixed by $\tau$. 
 The convergence condition of these series depends on the topology used.

For practical purposes, we should note that $\hh^\C$ and $Lie(\hh^\C)$ consist
of loops {\tiny{$\bbar a&b\\c&d \ebar$}} which take values in $SL_2\C$ and 
$\mathfrak{sl}_2 \C$ respectively, and such that the coefficients $a$ and $d$
are even functions of the loop parameter $\lambda$, whilst $b$ and $c$ are odd functions of $\lambda$.  $\hh_{\pm}^\C$ and $Lie(\hh_{\pm}^\C)$ are the elements which 
have the further condition that only non-negative or non-positive exponents of $\lambda$
appear in their Fourier expansions.
  For a scalar-valued function $x(\lambda)$, we use the notation 
\bdm
x^*(\lambda) := \overline{x(\bar{\lambda}^{-1}}).
\edm
 Then for the real form $\hh$ we have
\beq \label{matrixforms}
\hh_\tau = \Big\{ \bbar a&b\\ b^* & a^* \ebar \in \hh^\C \Big\}, \hspace{0.8cm}
\Psi(i \sigma_1) \cdot \hh_{\tau} = \Big\{ \bbar a&b\\ -b^* & -a^* \ebar \in \hh^\C \Big\},
\eeq
and the analogue for the Lie algebras.

\subsection{The Iwasawa decomposition for $SU_{1,1}$} \label{mainiwasawa}
To convert  Theorem \ref{thm:SU11Iwa} to the twisted setting, we 
use the isomorphism 
 from the untwisted to the twisted loop group, defined by
\beq \label{psiisom}
\Psi: \Lambda G^\C \to \Lambda G^\C_\sigma, \hspace{1cm}
\begin{pmatrix}
a(\lambda) & b(\lambda) \\ c(\lambda) & d(\lambda) 
\end{pmatrix} \mapsto 
\begin{pmatrix}
a(\lambda^2) & \lambda b(\lambda^2) \\ 
\lambda^{-1} c(\lambda^2) & d(\lambda^2) 
\end{pmatrix}.
\eeq
We define the Birkhoff big cell in $\Lambda G^\C_\sigma$ by 
$\mathcal{B}:= \Psi (\mathcal{B}^U)$. The Birkhoff factorization
theorem, Theorem \ref{thm:birkhoff}, then translates to the assertion
 that  $\mathcal{B} = \hh^\C_- \cdot  \hh^\C_+$, and that
 this  is an open dense subset of $\hh^\C$.

Define the \emph{$G$-Iwasawa big cell} for $\hh^\C$ to be
the set
\bdm
\mathcal{B}_{1,1} := \{ \phi \in 
\hh^\C ~|~ \tau(\phi)^{-1} \phi \in \mathcal{B}\}. 
\edm
It is easy to verify
 that $\tau = \Psi^{-1} \circ \tau \circ \Psi$, and this implies that
$\Psi$ maps $\mathcal{B}_{1,1}^U$ to $\mathcal{B}_{1,1}$.

To define the small cells, we first set, for a positive integer $m \in \Z^+$,
\[ \omega_{m} = \begin{pmatrix}
1 & 0 \\ \lambda^{-m} & 1
\end{pmatrix} \; , \;\; \text{$m$ odd} \;\; ; \;\;\; 
\omega_{m} = \begin{pmatrix}
1 & \lambda^{1-m} \\ 0 & 1 
\end{pmatrix} \; , \;\; \text{$m$ even.} \]  
The \emph{$n$-th small cell} is defined to be
\beq \label{pndecomp}
\mathcal{P}_n := \hh_\tau \cdot \omega_n \cdot \hh^\C_+.  
\eeq
Note that elements of  $\Omega G$, in the Iwasawa decomposition (\ref{psiwasawa}), correspond naturally to elements of
 the left coset space $\Lambda G /G$.  
For the twisted loop group, $\hh$, the role of $\Omega G$ is 
effectively played by $\hh/\hh^0$.

\pagebreak
\begin{theorem}\label{section1-thm:SU11Iwa} ($SU_{1,1}$ Iwasawa 
decomposition) 
\begin{enumerate}
\item\label{mainthm-(1)}
The group $\hh^{\C}$ is a disjoint union
\beq \label{globaldecomp}
 \hh^{\C} =  
\mathcal{B}_{1,1}  \sqcup \bigsqcup_{m \in \Z^+ } \mathcal{P}_m.
 \eeq
\item \label{mainthm-(2)}
Any loop $\phi \in \mathcal{B}_{1,1}$ can be expressed as 
\beq \label{bciwasawa}
\phi = F B,
\eeq
for $F \in \hh$ and $B \in \hh^\C_+$. The factor $F$ is unique up to right
multiplication by an element of $\hh^0$. The factors are unique
if we require that $B \in \uhat$,  and then the product map
$\hh \times \uhat \to \mathcal{B}_{1,1}$ is a 
real analytic diffeomorphism.

\noindent
 \item\label{mainthm-(3)}
The Iwasawa big cell, $\mathcal{B}_{1,1}$, is an open dense subset of
 $\hh^{\C}$. The complement of $\mathcal{B}_{1,1}$ in $\hh^{\C}$ is locally
 given as the zero set of a non-constant real analytic function $g: \hh^{\C} \to \C$.
\end{enumerate}
\end{theorem}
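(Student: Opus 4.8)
The plan is to transport everything through the isomorphism $\Psi$ of (\ref{psiisom}) from the untwisted results already established in Theorem \ref{thm:SU11Iwa}, and then to check that the normalizations match up correctly in the twisted setting. First I would observe that $\Psi$ is a group isomorphism $\Lambda G^\C \to \hh^\C$ intertwining $\tau$ with itself (this is already noted in the text: $\tau = \Psi^{-1}\circ\tau\circ\Psi$), and that $\Psi$ carries $\Lambda^+_\triangle G^\C$ into $\hh^\C_+$ and $\Lambda^+_\real G^\C$ essentially onto $\uhat$ — here one must be slightly careful, because squaring $\lambda$ doubles the degree and the $\lambda^{\pm1}$ prefactors in $\Psi$ move the off-diagonal entries, so the image of an upper-triangular constant term is again the class of $B(0)$ we want, modulo $\hh^0$. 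For item (\ref{mainthm-(1)}), I would apply $\Psi$ to the disjoint union in Theorem \ref{thm:SU11Iwa}(1): $\Psi(\mathcal{B}^U_{1,1}) = \mathcal{B}_{1,1}$ by the very definition $\mathcal{B}_{1,1} = \{\phi \mid \tau(\phi)^{-1}\phi \in \mathcal{B}\}$ together with $\mathcal{B} = \Psi(\mathcal{B}^U)$, and $\Psi(\widehat{\mathcal{P}}_m)$ must be identified with one of the $\mathcal{P}_n$. The key computation is that $\Psi(\hat\omega_m)$, up to left multiplication by an element of $\hh_\tau$ and right multiplication by an element of $\hh^\C_+$, equals $\omega_n$ for the appropriate $n$; here the parity split in the definition of $\omega_m$ ($\lambda^{-m}$ in the lower-left when $m$ is odd, $\lambda^{1-m}$ in the upper-right when $m$ is even) is exactly what $\Psi$ produces from $\hat\omega_m = \left(\begin{smallmatrix} 1/2 & \lambda^m \\ -\frac12\lambda^{-m} & 1\end{smallmatrix}\right)$ after the substitution $\lambda \mapsto \lambda^2$ and the $\lambda^{\pm1}$ twist, followed by absorbing the constant $\left(\begin{smallmatrix}1/2 & 0\\0&1\end{smallmatrix}\right)$-type factors. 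I would record this as a short explicit matrix identity; this is the one genuinely computational point and it is where a sign or parity error would most easily creep in.

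For item (\ref{mainthm-(2)}), the decomposition $\phi = FB$ with $F \in \hh$, $B \in \hh^\C_+$ is immediate by pushing Theorem \ref{thm:SU11Iwa}(2) through $\Psi$ and using (\ref{matrixforms}) to see that $\Psi(\Lambda G) = \hh$. The only real content is the uniqueness statement: I would argue that if $FB = F'B'$ then $F^{-1}F' = B(B')^{-1} \in \hh \cap \hh^\C_+$, and $\hh \cap \hh^\C_+ = \hh^0$ — a loop in $\hh^\C_+$ fixed (up to sign) by $\tau$ must be constant, since $\tau$ sends $\hh^\C_+$ to $\hh^\C_-$, so the loop extends holomorphically to all of $\C\PP^1$ and hence is constant. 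That gives the ``unique up to $\hh^0$'' clause; then requiring $B(0)$ to be the positive real diagonal form in the definition of $\uhat$ pins down the $\hh^0$-ambiguity (the relevant subgroup of $\hh^0$ meeting $\uhat$ trivially is what one checks), and the real-analytic diffeomorphism property of the product map $\hh \times \uhat \to \mathcal{B}_{1,1}$ follows from the corresponding statement in Theorem \ref{thm:SU11Iwa}(2) because $\Psi$ and its restrictions are real-analytic isomorphisms.

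Item (\ref{mainthm-(3)}) is essentially a corollary of Theorem \ref{thm:SU11Iwa}(4): $\Psi$ is a real-analytic diffeomorphism of $\Lambda G^\C$ onto $\hh^\C$ carrying $\mathcal{B}^U_{1,1}$ onto $\mathcal{B}_{1,1}$, so openness and density are preserved, and the defining function $g$ of the complement pulls back to a non-constant real-analytic $\C$-valued function on $\hh^\C$ whose zero set is $\hh^\C \setminus \mathcal{B}_{1,1}$; non-emptiness of $\mathcal{B}_{1,1}$ is clear since it contains $I$, so the complement is a proper zero set and $\mathcal{B}_{1,1}$ is dense. The main obstacle, as indicated above, is purely bookkeeping: verifying that $\Psi$ matches the untwisted middle terms $\hat\omega_m$ (and $\hat\omega_\pm$) with the twisted $\omega_n$ under the prescribed left/right normalizations, and in particular confirming that the twisted small cells are indexed by $\mathbb{Z}^+$ rather than $\mathbb{Z}$ — the point being that $\hat\omega_m$ and $\hat\omega_{-m}$ become equivalent after the $\lambda \mapsto \lambda^2$ folding, so the two-sided untwisted family collapses to a one-sided twisted family. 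I would make that collapse explicit and check that $\hat\omega_\pm = I, \left(\begin{smallmatrix}0&1\\-1&0\end{smallmatrix}\right)$ both land in $\mathcal{B}_{1,1}$ (consistent with item (1)), completing the correspondence.
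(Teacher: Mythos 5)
Your overall strategy—transporting Theorem \ref{thm:SU11Iwa} through the isomorphism $\Psi$ of (\ref{psiisom}), using $\tau\circ\Psi=\Psi\circ\tau$ to identify $\Psi(\mathcal{B}^U_{1,1})=\mathcal{B}_{1,1}$, and deducing uniqueness, the diffeomorphism property, and item (3) from the untwisted statements—is exactly the paper's proof, and your direct argument that $\hh\cap\hh^\C_+=\hh^0$ (a $\tau$-semi-fixed plus loop extends to $\C\PP^1$ and is constant) is a correct and slightly more explicit way to get the uniqueness clause than the paper's appeal to the untwisted theorem.

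However, the mechanism you give for why the twisted small cells are indexed by $\Z^+$ rather than $\Z$ is wrong, and it is precisely the ``one genuinely computational point'' you flagged. You claim that $\hat\omega_m$ and $\hat\omega_{-m}$ ``become equivalent after the $\lambda\mapsto\lambda^2$ folding, so the two-sided untwisted family collapses to a one-sided twisted family.'' There is no collapse: $\Psi$ is a group isomorphism onto $\hh^\C$, so the images $\Psi(\widehat{\mathcal{P}}_m)$, $m\in\Z$, remain pairwise disjoint—if two of them coincided, item (1) of the theorem you are proving would be false. What actually happens is a bijective reindexing $\Z\to\Z^+$. Since $\Psi(\hat\omega_m)=\left(\begin{smallmatrix}1/2&\lambda^{2m+1}\\-\tfrac12\lambda^{-(2m+1)}&1\end{smallmatrix}\right)$, only odd exponents $k=2m+1$ occur; for $k>0$ one has $(i\sigma_3)\,\Psi(\hat\omega_m)\,(-i\sigma_3)=\left(\begin{smallmatrix}1&0\\ \lambda^{-k}&1\end{smallmatrix}\right)B_+$ with $B_+\in\hh^\C_+$, giving the odd-indexed cells $\mathcal{P}_k$, while for $k<0$ one has $\Psi(\hat\omega_m)=\left(\begin{smallmatrix}1&\lambda^{k}\\0&1\end{smallmatrix}\right)B_+$, which is $\omega_{1-k}B_+$ with $1-k$ even and positive, giving the even-indexed cells. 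So positive untwisted exponents populate the odd twisted cells and negative ones populate the even twisted cells; nothing is identified. As written, your proposed verification would either fail (you cannot exhibit the claimed equivalence) or, if forced through, would contradict the disjointness of \eqref{globaldecomp}. Replacing the ``collapse'' by this reindexing computation repairs the argument and brings it into line with the paper's proof.
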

\begin{proof}
The theorem follows from the untwisted statement,
Theorem \ref{thm:SU11Iwa}.  Under the isomorphism $\Psi$, given by (\ref{psiisom}),
 $\hat \omega_+$ stays the same, $\hat \omega_-$ becomes 
${\tiny{\bbar 0 & \lambda \\ -\lambda^{-1} & 0\ebar}}$, 
and the $\hat \omega_m$ appear  only for odd $m$.  
  Then, noting that, for $m>0$, 
\[ (i \sigma_3) \hat \omega_m (-i \sigma_3) = \begin{pmatrix}
1 & 0 \\ \lambda^{-m} & 1
\end{pmatrix} B_+ \; , \hspace{.7cm} B_+ = \begin{pmatrix}
1/2 & -\lambda^m \\ 0 & 2
\end{pmatrix} \in \Lambda^+ G^ \C, \] 
 and, for $m<0$,
\[ \hat \omega_m = \begin{pmatrix}
1 & \lambda^m \\ 0 & 1
\end{pmatrix} B_+ \; , \hspace{.7cm} B_+ = \begin{pmatrix}
1 & 0 \\ -\lambda^{-m}/2 & 1 
\end{pmatrix} \in \Lambda^+ G^ \C, \] 
and that $B_+$ can be 
absorbed into the right-hand $\hh^\C_+$ factor 
 of any splitting, we can 
replace, in Theorem \ref{thm:SU11Iwa}, the above 
$\hat \omega_\pm$ and $\hat \omega_m$ respectively with the matrices $I$, 
$\tiny{\begin{pmatrix} 0 & \lambda\\-\lambda^{-1} & 0 \end{pmatrix}}$
 and 
the $\omega_m$ defined in Section \ref{section2}. 
This gives the small cell factorizations of  (\ref{pndecomp}) of 
Theorem \ref{section1-thm:SU11Iwa}.  
The big cell factorization of item \ref{mainthm-(2)} follows from
the observation that 
\bdm
\tau \bbar 0 & \lambda \\ -\lambda^{-1} & 0\ebar = 
- \bbar 0 & \lambda \\ -\lambda^{-1} & 0\ebar,
\edm
so that elements with this middle term can be represented as
$\phi = F B$, with $\tau(F) = -F$, that is, $F \in \Psi(i \sigma_1)  \hh_\tau \subset \hh$. 

The diffeomorphism property on the big cell,
 the disjoint union property, item \ref{mainthm-(1)}, and 
 item \ref{mainthm-(3)}
 follow from the corresponding statements in
Theorem \ref{thm:SU11Iwa}.
\end{proof}

\begin{corollary} \label{cor1}
The map $\pi: \mathcal{B}_{1,1} \to \hh/\hh^0$ given by
$\phi \mapsto [F]$, derived from
 (\ref{bciwasawa}), is a real analytic
projection.
\end{corollary}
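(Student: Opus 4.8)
The plan is to deduce the statement directly from part \ref{mainthm-(2)} of Theorem \ref{section1-thm:SU11Iwa}. By that part, every $\phi \in \mathcal{B}_{1,1}$ factors as $\phi = FB$ with $F \in \hh$ and $B \in \uhat$, and both factors are \emph{uniquely} determined; moreover the resulting product map $\hh \times \uhat \to \mathcal{B}_{1,1}$ is a real analytic diffeomorphism. First I would observe that the map $\phi \mapsto [F]$ in the statement is exactly the composition of three maps: the inverse of this diffeomorphism, $\mathcal{B}_{1,1} \to \hh \times \uhat$; the projection onto the first factor, $\hh \times \uhat \to \hh$; and the canonical quotient projection $q\colon \hh \to \hh/\hh^0$. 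Well-definedness of $\pi$ is immediate from the uniqueness of the normalized factor $F$ (and in fact $[F]$ does not even depend on the normalization of $B$, since replacing $B$ by $B_0 B$ with $B_0 \in \hh^0$ changes $F$ to $F B_0^{-1}$, which lies in the same left coset modulo $\hh^0$).

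Next I would argue real analyticity. The first two maps in the composition are real analytic: the inverse of the diffeomorphism by Theorem \ref{section1-thm:SU11Iwa}\ref{mainthm-(2)}, and the projection $\hh \times \uhat \to \hh$ trivially. So it remains to note that the quotient map $q\colon \hh \to \hh/\hh^0$ is real analytic once $\hh/\hh^0$ is given its natural manifold structure. Here $\hh^0 = \hh \cap G^\C$ is a (finite-dimensional) closed Lie subgroup of the Banach Lie group $\hh$ acting on the right; $\hh/\hh^0$ is then a real analytic manifold and $q$ is a real analytic submersion, in particular a real analytic projection in the sense intended. Composing, $\pi = q \circ (\text{pr}_1) \circ (\text{Iwasawa})^{-1}$ is real analytic, and it is a projection (a submersion, indeed an analogue of the bundle projection $\mathcal{B}_{1,1} \to \mathcal{B}_{1,1}/\hh^0_{\text{right}}$) because each factor is.

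The one point requiring a word of care — and the only place where anything beyond bookkeeping enters — is the identification of $\hh/\hh^0$ with a genuine manifold and the verification that $q$ is a submersion in the Banach setting; this is where I would cite the standard theory (e.g. as in \cite{PreS}) that a closed Lie subgroup of a Banach Lie group yields a smooth homogeneous space, together with the fact that $\hh^0$ is finite-dimensional so there is no subtlety about splittings of the relevant tangent spaces. Everything else is a formal consequence of Theorem \ref{section1-thm:SU11Iwa}. I would therefore keep the proof to two or three sentences: define $\pi$ as the composite above, note that uniqueness of $F$ makes it well defined and that each constituent map is real analytic by the cited theorem and the homogeneous-space structure on $\hh/\hh^0$, and conclude.
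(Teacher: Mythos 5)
Your proposal is correct and is exactly the argument the paper intends: the corollary is stated without proof as an immediate consequence of Theorem \ref{section1-thm:SU11Iwa}\ref{mainthm-(2)}, namely writing $\pi$ as the quotient map composed with the first-factor projection and the inverse of the real analytic diffeomorphism $\hh \times \uhat \to \mathcal{B}_{1,1}$, with well-definedness coming from the uniqueness of $F$ up to right multiplication by $\hh^0$. Your added remark about the homogeneous-space structure on $\hh/\hh^0$ is a reasonable point of care that the paper leaves implicit.
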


\begin{remark} \label{psidefremark}
 The density of the big cell can also be seen explicitly as follows:
 consider the continuous family of loops 
\[
\psi^m_z := \begin{pmatrix} 1 & 0 \\ 
z \lambda^{-m} & 1 \end{pmatrix}, ~~\textup{$m$ odd}; \hspace{.7cm}
 \psi^m_z := \begin{pmatrix} 1 &  z \lambda^{-m+1} \\ 
0 & 1 \end{pmatrix}, ~~\textup{$m$ even}.
\]
Now $\psi^m_1 = \omega_m$, but for
$|z| \neq 1$, $\psi_z$ is in the big cell and has the Iwasawa decomposition:
$\psi^m_z = F^m_z \cdot B^m_z$, where, for odd values of $m$, 
\bdm
F^m_z =   \frac{1}{\sqrt{1-z\bar{z}}} 
\begin{pmatrix} 1 & \bar{z} \lambda ^m\\ z \lambda^{-m} & 1 
\end{pmatrix}, \hspace{.7cm}
B^m_z =  \frac{1}{\sqrt{1-z\bar{z}}}
\begin{pmatrix} 1-z \bar{z} & - \bar{z}\lambda ^m\\ 0 & 1 
\end{pmatrix},
\edm 
and, for even values of $m$:
\bdm
\psi^m_z = \textup{Ad}_{\sigma_1} \psi_z^{m-1} =
   \textup{Ad}_{\sigma_1} F^{m-1}_z \cdot \textup{Ad}_{\sigma_1} B^{m-1}_z.
\edm
If ${\phi}_0$ is any element of $\mathcal{P}_m$,  then 
it has a decomposition $\phi_0 = F_0 \omega_m B_0$, in 
accordance with (\ref{pndecomp}).
 Now define
the continuous path, for $t \in \real$, 
$\hat{\phi}_t = F_0 \psi^m_t B_0$.  
Then $\hat{\phi}_1 = \phi_0$, but for $t \neq 1$, 
 $\hat{\phi}_t = F_0 F^m_t B^m_t B_0$, which is in the big cell.
So $\hat{\phi}_t$ gives a family of elements in the big cell which
are arbitrarily close to $\phi_0$ as $t \to 1$.  
\end{remark}

\subsection{A factorization lemma}

Later, in Section \ref{brander-section},
we will use the following explicit factorization for an element of the
form $B\omega_m^{-1}$, for  $B \in \hh^\C_+$, 
and $m = 1$ or $2$.

\begin{lemma} \label{switchlemma}
Let $B = \begin{pmatrix} a & b \\ c & d \end{pmatrix} = 
\begin{pmatrix}
\sum_{i = 0}^\infty a_i \lambda^i &
  \sum_{i = 1}^\infty b_i \lambda^i \\
  \sum_{i = 1}^\infty c_i \lambda^i &
  \sum_{i = 0}^\infty d_i \lambda^i \end{pmatrix}$ 
be any element of $\hh^\C_+$. Then there 
exists a factorization
\beq \label{switchfact}
B \omega_1^{-1} = X \widehat{B},
\eeq
where $\widehat{B} \in \hh^\C_+$ and $X$ is of one of 
the following three
forms: 
\bdm
k_1 = \begin{pmatrix} u & v \lambda \\ \bar{v} \lambda^{-1} & \bar{u} 
\end{pmatrix}, \hspace{.5cm}
k_2 =\begin{pmatrix} u & v \lambda \\ -\bar{v} \lambda^{-1} & -\bar{u} 
\end{pmatrix}, \hspace{.5cm}
\omega_1^\theta = \begin{pmatrix} 1 & 0 \\ e^{i\theta} \lambda^{-1} & 1 
\end{pmatrix}, 
\edm
where $u$ and $v$ are constant in $\lambda$ and can be chosen so 
that the matrix
has determinant one, and $\theta \in \real$. 
The matrices $k_1$ and $k_2$ are in $\hh$, 
and their components  satisfy the equation 
\beq \label{uveqn}
\frac{|u|}{|v|} = |b_1-a_0||a_0| \; . \\
\eeq
The third form occurs if and only if
$B \omega_1^{-1}$ is in the first small cell, $\mathcal{P}_1$,
and the three cases  correspond 
to the cases $|(b_1-a_0)a_0|$
greater than, less than or equal to 1, respectively.

The analogue holds replacing $\omega_1$ with $\omega_2$, the matrices
$k_i$ and $\omega_1^\theta$ with $\Ad_{\sigma_1} k_i$ and 
$\Ad_{\sigma_1} \omega_1^\theta$, and replacing $\mathcal{P}_1$
with $\mathcal{P}_2$, and Equation \eqref{uveqn} with 
\beq \label{uveqn2}
\frac{|u|}{|v|} = |c_1-d_0||d_0| \; . 
\eeq
\end{lemma}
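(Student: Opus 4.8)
The plan is to reduce the statement to a direct computation with the leading Fourier coefficients of $B\omega_1^{-1}$, exactly paralleling the constant-loop case of Corollary \ref{factcor}. First I would compute $B\omega_1^{-1}$ explicitly: since $\omega_1 = \left(\begin{smallmatrix} 1 & 0 \\ \lambda^{-1} & 1 \end{smallmatrix}\right)$, its inverse is $\left(\begin{smallmatrix} 1 & 0 \\ -\lambda^{-1} & 1 \end{smallmatrix}\right)$, so
\[
B\omega_1^{-1} = \begin{pmatrix} a - b\lambda^{-1} & b \\ c - d\lambda^{-1} & d \end{pmatrix}.
\]
The key observation is that the $(1,1)$ entry is $a - b\lambda^{-1} = (a_0 - b_1) + (a_1 - b_2)\lambda + \cdots$, which is holomorphic on $D_+$ with value $a_0 - b_1$ at $\lambda = 0$, while the $(2,1)$ entry $c - d\lambda^{-1} = -d_0\lambda^{-1} + (c_1 - d_1) + \cdots$ has a simple pole with residue $-d_0 = -1/a_0$ (using $a_0 d_0 = 1$ from $\det B(0) = 1$). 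So $B\omega_1^{-1}$ is a Laurent loop in $\Lambda_{-1,\infty}$ with pole order $n = 1$ at $\lambda = 0$, and Theorem \ref{thm:laurent-iwasawa-kerL} applies: its $\SU_{1,1}$-Iwasawa factor $F$ lies in $\Lambda_{-1,1}$, i.e. $F$ has exactly the form of one of $k_1, k_2, \omega_1^\theta$ (these being the only elements of $\hh \cap \Lambda_{-1,1}$ normalized so that the pole sits in the $(2,1)$ slot). Writing $B\omega_1^{-1} = X\widehat B$ with $X = F$ and $\widehat B = F^{-1}B\omega_1^{-1} \in \hh^\C_+$ then gives \eqref{switchfact}.

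Next I would pin down which of the three forms occurs and derive \eqref{uveqn}. Write $X = B\omega_1^{-1}\widehat B^{-1}$ and evaluate the $(1,1)$ and $(2,1)$ entries at $\lambda = 0$. Since $\widehat B^{-1} \in \hh^\C_+$ with $\widehat B^{-1}(0)$ lower-triangular-free — actually $\widehat B(0) = \left(\begin{smallmatrix} \rho & 0 \\ 0 & \rho^{-1}\end{smallmatrix}\right)$ after the normalization, or at worst upper triangular — the leading behavior of $X$ at $\lambda = 0$ is controlled by the leading Fourier data of $B\omega_1^{-1}$ computed above: the value of the holomorphic $(1,1)$ entry is $a_0 - b_1$ (times $\widehat B^{-1}(0)_{11}$) and the residue of the $(2,1)$ entry is $-d_0 = -a_0^{-1}$ (times $\widehat B^{-1}(0)_{11}$). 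For $X = k_1$ the $(1,1)$-value is $u$ and the $(2,1)$-residue is $\bar v$; for $k_2$ they are $u$ and $-\bar v$; for $\omega_1^\theta$ the $(1,1)$-value is $1$ and the $(2,1)$-residue is $e^{i\theta}$. Taking the ratio of moduli kills the unknown positive factor $\widehat B^{-1}(0)_{11}$ (which must be real positive by the $\uhat$-normalization and the determinant constraint) and yields $|u|/|v| = |a_0 - b_1| / |a_0|^{-1} = |b_1 - a_0|\,|a_0|$, which is \eqref{uveqn}. The trichotomy $|u| > |v|$, $|u| < |v|$, $|u| = |v|$ — equivalently $|(b_1 - a_0)a_0|$ greater than, less than, or equal to $1$ — then matches the three cases of Corollary \ref{factcor}: the first two give $k_1$ (with $\tau(k_1) = k_1$, so $u\bar u - v\bar v = 1$) and $k_2$ (with $\tau(k_2) = -k_2$, so $u\bar u - v\bar v = -1$) respectively, and the boundary case $|u| = |v|$ forces $X = \omega_1^\theta$, which happens precisely when $B\omega_1^{-1} \in \mathcal{P}_1$, by item (1) of Theorem \ref{section1-thm:SU11Iwa} and the fact that $\omega_1^\theta \in \hh_\tau \cdot \omega_1 \cdot \hh^\C_+$.

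Finally, the $\omega_2$ statement follows by conjugating with $\sigma_1$: one checks $\omega_2 = \Ad_{\sigma_1}\widetilde\omega_1$ where $\widetilde\omega_1 = \left(\begin{smallmatrix} 1 & 0 \\ \lambda^{-1} & 1 \end{smallmatrix}\right)$ up to absorbing a $\hh^\C_+$ factor (this is the content of the even-$m$ normalization in Section \ref{twistediwasawasection}, via $\omega_2 = \left(\begin{smallmatrix} 1 & \lambda^{-1} \\ 0 & 1\end{smallmatrix}\right) = \Ad_{\sigma_1}\left(\begin{smallmatrix} 1 & 0 \\ \lambda^{-1} & 1\end{smallmatrix}\right)$), so that applying the already-proved $\omega_1$ case to $\Ad_{\sigma_1}B$ and conjugating back by $\sigma_1$ gives the factorization with $k_i$ and $\omega_1^\theta$ replaced by $\Ad_{\sigma_1}k_i$ and $\Ad_{\sigma_1}\omega_1^\theta$; the roles of the entries $(a_0, b_1)$ and $(d_0, c_1)$ are swapped under $\Ad_{\sigma_1}$, giving \eqref{uveqn2}. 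The main obstacle I anticipate is bookkeeping: carefully tracking how the normalization of $\widehat B$ (ensuring $\widehat B \in \hh^\C_+$ with $\widehat B^{-1}(0)$ of the right triangular type and positive real diagonal) interacts with reading off $u, v, \theta$ from the two leading coefficients, and making sure the determinant-one normalization of $X$ is consistent with this. The meromorphic-loop machinery of Theorem \ref{thm:laurent-iwasawa-kerL} does the structural work of guaranteeing $F \in \Lambda_{-1,1}$; everything after that is finite linear algebra in $2\times 2$ matrices, and the trichotomy is forced once the residue/value ratio is computed.
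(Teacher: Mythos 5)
Your computation of the leading Fourier data of $B\omega_1^{-1}$ is correct, and on the big cell your route is sound and close in spirit to the paper's: since the first row of $B\omega_1^{-1}$ is holomorphic at $\lambda=0$ and the $(2,1)$ entry has a simple pole with residue $-d_0=-a_0^{-1}$, the Iwasawa $\hh$-factor (when it exists) lies in $\Lambda_{-1,1}$ with its pole only in the $(2,1)$ slot, hence is of type $k_1$ or $k_2$, and comparing the $(1,1)$-value and the $(2,1)$-residue of $X=(B\omega_1^{-1})\widehat B^{-1}$ gives exactly \eqref{uveqn}; the determinant condition then sorts $k_1$ from $k_2$ according to $|(b_1-a_0)a_0|$ greater or less than $1$. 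The reduction of the $\omega_2$ statement by $\Ad_{\sigma_1}$ is also fine (in fact $\omega_2=\Ad_{\sigma_1}\omega_1$ exactly, so no $\hh^\C_+$ absorption is needed). The paper's own proof instead untwists $B\omega_1^{-1}$ by $\Psi^{-1}$, observes the result has no pole on the disk, and factors its constant term with Corollary \ref{factcor} (and also records fully explicit formulas for $X$ and $\widehat B$); your use of Theorem \ref{thm:laurent-iwasawa-kerL} is an acceptable substitute on the big cell, modulo the small remark that that theorem is stated for the untwisted group and must be transported via $\Psi$ or via $\hh^\C\subset\Lambda G^\C$.

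The genuine gap is the third case. Your structural tool produces a factorization only when $B\omega_1^{-1}\in\mathcal{B}_{1,1}$: on the small cells there is no Iwasawa splitting and Theorem \ref{thm:laurent-iwasawa-kerL} does not apply, and your parenthetical claim that $\omega_1^\theta$ is one of the elements of $\hh\cap\Lambda_{-1,1}$ is false, since $\tau(\omega_1^\theta)$ is upper triangular, so $\omega_1^\theta\notin\hh$ and can never arise as an Iwasawa factor -- which is precisely why this case is exceptional. What your argument actually yields is one implication: if $B\omega_1^{-1}$ is in the big cell then $|u|/|v|=|(b_1-a_0)a_0|\neq 1$; when $|(b_1-a_0)a_0|=1$ you have neither the existence of a factorization $B\omega_1^{-1}=\omega_1^\theta\widehat B$ with $\widehat B\in\hh^\C_+$, nor the statement that $B\omega_1^{-1}$ lies in $\mathcal{P}_1$ rather than some higher small cell, both of which are asserted ("the boundary case forces $X=\omega_1^\theta$") but not proved. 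To close this you need a construction that does not presuppose big-cell membership: either the paper's explicit choice $u=\tfrac{1}{\sqrt2}$, $\bar v=-\tfrac{e^{i\theta}}{\sqrt2}$ with $u/\bar v=(a_0-b_1)a_0$, followed by splitting $\begin{pmatrix} u & v\lambda \\ -\bar v\lambda^{-1} & \bar u\end{pmatrix}=\omega_1^\theta\cdot(\text{an }\hh^\C_+\text{ factor})$, or the untwisting argument: $\Psi^{-1}(B\omega_1^{-1})$ extends holomorphically to the disk with constant term $\begin{pmatrix} a_0-b_1 & b_1 \\ -d_0 & d_0\end{pmatrix}$, and Corollary \ref{factcor} (whose case (3) requires no big-cell hypothesis) factors this constant term in all three cases at once; re-twisting gives $X\in\{k_1,k_2,\omega_1^\theta\}$ with the remaining factor in $\hh^\C_+$ because its value at $0$ is upper triangular. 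Once existence is settled, $\omega_1^\theta=D\omega_1 D^{-1}$ with $D=\operatorname{diag}(e^{-i\theta/2},e^{i\theta/2})\in\hh_\tau$ shows $B\omega_1^{-1}\in\mathcal{P}_1$, and the disjointness of the cells in Theorem \ref{section1-thm:SU11Iwa} gives the ``only if'' and completes the trichotomy.
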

\begin{proof}
The second statement, concerning $\omega_2$, is obtained trivially from the 
first, because  $\omega_2 = \Ad_{\sigma_1} \omega_1$, so we can get the
factorization by applying the 
homomorphism $\Ad_{\sigma_1}$ to both sides of \eqref{switchfact}.  

To obtain the factorization \eqref{switchfact}, note that under the isomorphism
given by (\ref{psiisom}), $\omega_1^{-1}$ becomes $\bbar 1 & 0 \\ -1 & 1 \ebar$,
so the untwisted form of $B \omega_1^{-1}$ has no pole on the unit disc, and the
factorization can be obtained by factoring the constant term, using Corollary
\ref{factcor}. 

Alternatively, one can write down  explicit expressions as follows:
 for the cases 
 $|(b_1-a_0)a_0|^{\varepsilon} >1$, where $\varepsilon = \pm 1$,
  the factorization is given by
\beq \label{explicitfact}
\begin{split}
&X = 
\begin{pmatrix} u & v \lambda \\ \varepsilon \bar{v} \lambda^{-1} & \varepsilon \bar{u} 
\end{pmatrix} ,  \\
&\widehat{B} = 
\begin{pmatrix} -\varepsilon \bar{u}b \lambda^{-1} + dv + \varepsilon \bar{u}a -vc \lambda ~&~
   b \varepsilon \bar{u} - v d \lambda \\
   \varepsilon \bar{v} b \lambda^{-2} -(\varepsilon \bar{v} a + u d)\lambda^{-1} + u c ~&~
     -b \varepsilon \bar{v}\lambda^{-1} + u d \end{pmatrix}. 
\end{split}
\eeq
One can choose $u$ and $v$ so that $\varepsilon (u \bar{u} - v \bar{v}) = 1$ and 
such that $\widehat{B} \in \hh^\C_+$, the latter condition being 
assured by the requirement that 
$\frac{u}{\bar{v}} = \varepsilon (b_1 -a_0)a_0$.
It is straightforward to verify that $X \widehat{B} =  B \omega_1^{-1}$.

For the case $|(b_1-a_0)a_0| =1$, substitute $\bar u$ for $\varepsilon \bar u$ and
 $-\bar{v}$ for
$\varepsilon \bar{v}$  in the above expression, and choose
$\frac{u}{\bar{v}} =  (a_0-b_1)a_0$.
One can choose $u= \frac{1}{\sqrt{2}}$ and $\bar v = 
\frac{-e^{i \theta}}{\sqrt{2}}$ and 
\bdm
\begin{pmatrix} u & v \lambda \\ -\bar{v} \lambda^{-1} & \bar{u} 
\end{pmatrix}
= \begin{pmatrix} 1 & 0 \\ e^{i \theta} \lambda^{-1} & 1 
\end{pmatrix} 
   \begin{pmatrix} \frac{1}{\sqrt{2}} & -\frac{1}{\sqrt{2}} 
e^{- i \theta} \lambda \\ 0 & \sqrt{2} \end{pmatrix}.
\edm
Pushing the  last factor into $\widehat{B}$ 
then gives the required factorization.  
In this case, $B\omega_1^{-1}$ is in $\mathcal{P}_1$, 
because it can be expressed as 
\bdm
\small\bbar e^{-i\theta/2}& 0 \\ 0 & e^{i\theta/2} \ebar \cdot
  \omega_1  \cdot
\bbar e^{i\theta/2} & 0 \\ 0 & e^{-i\theta/2} \ebar \widehat{B}.
\edm 
\end{proof}

\section{The loop group formulation and DPW method for 
spacelike CMC surfaces in $\R^{2,1}$}
\label{section2}

The loop group formulation for CMC surfaces in ${\mathbb E}^3$, $\SSS^3$ and ${\mathbb H}^3$ evolved
from the work of Sym \cite{sym1985}, Pinkall 
and Sterling \cite{pinkallsterling},
and Bobenko \cite{bobenko, bobenko1994}.  
The Sym-Bobenko formula for CMC surfaces was given by 
Bobenko \cite{Bob:cmc, bobenko1994},
generalizing the formula for pseudo-spherical surfaces of Sym \cite{sym1985}.
The case that the ambient space is non-Riemannian is analogous, replacing
the compact Lie group $SU_2$ with the non-compact real form $SU_{1,1}$, 
as we show in this 
section.  

\subsection{The $SU_{1,1}$-frame}\label{section2-wayne}
The matrices
$\{ e_1, ~ e_2, ~ e_3 \} := \{\sigma_1, ~ - \sigma_2, ~i \sigma_3 \}$ form 
a basis for the Lie algebra $\mathfrak{g} = \mathfrak{su}_{1,1}$.
Identifying the Lorentzian 3-space $\R^{2,1}$ with $\mathfrak{g}$, with inner
product given by $\langle X,Y \rangle = \tfrac{1}{2} \text{trace} (XY)$,
we have 
\bdm
\langle e_1,e_1 \rangle = \langle e_2,e_2 \rangle = - \langle e_3,  e_3 \rangle = 1
\edm
 and 
$\langle \sigma_i,\sigma_j \rangle = 0$ for $i \neq j$.  

Let $\Sigma$ be a Riemann surface, and suppose $f: \Sigma \to \R^{2,1}$ is a spacelike immersion with mean
curvature $H \neq 0$. Choose conformal coordinates $z = x + iy$ and define
a function $u: \Sigma \to \real$ such that the metric is given by
\begin{equation}\label{eqn:dssquared}
\dd s^2 = 4e^{2u}(\dd x^2 + \dd y^2).
\end{equation}
We can define a frame $F: \Sigma \to SU_{1,1}$ by demanding that
\bdm
F e_1 F^{-1} = \frac{f_x}{|f_x|}, \hspace{1cm} 
F e_2 F^{-1} = \frac{f_y}{|f_y|}.
\edm
Assume coordinates for the target and domain are chosen such that 
$f_x(0) = |f_x(0)|e_1$ and $f_y(0) = |f_y(0)|e_2$,
so that $F(0) = I$. Then the frame $F$ is unique.
A choice of unit normal vector is given by
$N = F e_3 F^{-1}$.  
The Hopf differential is defined to be $Q \dd z^2$, where
\bdm
Q:= \langle N,f_{zz} \rangle = -\langle N_z,f_z \rangle.
\edm 

The Maurer-Cartan form, $\alpha$, for the frame $F$ is defined by
$\alpha := F^{-1} \dd F = U \dd z + V \dd \bar{z}$. 
\begin{lemma}
The connection coefficients $U := F^{-1}F_z$ and 
$V := F^{-1}F_{\bar{z}}$ are given by 
\begin{equation}\label{UhatandVhat}
U = \frac{1}{2} \begin{pmatrix} u_z & -2 i H e^u \\ 
                                   i e^{-u} Q & -u_z \end{pmatrix} 
, \hspace{1cm} V = \frac{1}{2} \begin{pmatrix} -u_{\bar z} & -i e^{-u} \bar Q \\ 
                            2 i H e^u & u_{\bar z} \end{pmatrix} 
\; . \end{equation}
The compatibility condition $\dd \alpha + \alpha \wedge \alpha = 0$
is equivalent to the pair of equations
\beqa 
\, && u_{z \bar z} - H^2 e^{2u}+\tfrac{1}{4}  |Q|^2 e^{-2u} = 0,
\label{compatibility1} \\
\, && Q_{\bar z} = 2 e^{2 u} H_z \; . \nonumber 
\eeqa
\end{lemma}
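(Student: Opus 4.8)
The plan is to derive the two structural equations for the frame $F\colon\Sigma\to SU_{1,1}$ directly, in close parallel with the classical Euclidean computation, using the identification of $\R^{2,1}$ with $\mathfrak{su}_{1,1}$ and the orthonormal basis $\{e_1,e_2,e_3\}$ fixed above. First I would record that, since $\{Fe_1F^{-1},Fe_2F^{-1},Fe_3F^{-1}\}=\{f_x/|f_x|,f_y/|f_y|,N\}$ is an orthonormal frame along $f$ with $|f_x|=|f_y|=2e^u$ (from \eqref{eqn:dssquared}), one has $f_z=e^u\,F e_+ F^{-1}$ where $e_+=\tfrac12(e_1-ie_2)=\sigma_+$ up to sign conventions, and $N=Fe_3F^{-1}$. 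Differentiating $f_z$ and $N$ and expressing everything back in the moving frame converts the geometric data $u$, $H$, $Q$ into the entries of $U=F^{-1}F_z$ and $V=F^{-1}F_{\bar z}$. Concretely: $N_z=-2He^u\cdot(\text{tangential part})+\langle N_z,N\rangle N$, and since $\langle N,N\rangle=-1$ is constant the normal component of $N_z$ vanishes; pairing $N_z$ with $f_z$ and $f_{\bar z}$ and using $H=\tfrac12\,\mathrm{tr}(\text{shape operator})$ together with $Q=-\langle N_z,f_z\rangle$ pins down the relevant matrix entries. The reality/involution constraint $\tau(F)=F$, i.e. $F\in SU_{1,1}$, forces $U$ and $V$ to lie in $\mathfrak{su}_{1,1}^\C$ with the stated symmetry $V=-\Ad_{\sigma_3}\overline{U^t}$ (equivalently $V$ is obtained from $U$ by the bar-swap appropriate to this real form), which is exactly the relation between the two displayed matrices in \eqref{UhatandVhat}; so really only $U$ needs to be computed from scratch.

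Next, for the compatibility half of the lemma, I would simply expand $\dd\alpha+\alpha\wedge\alpha=0$. Writing $\alpha=U\,\dd z+V\,\dd\bar z$, the flatness condition is the single matrix equation $V_z-U_{\bar z}+[V,U]=0$ (the coefficient of $\dd\bar z\wedge\dd z$). Substituting the explicit $2\times2$ matrices from \eqref{UhatandVhat} and reading off entries: the diagonal entry gives, after cancellation, $u_{z\bar z}-H^2e^{2u}+\tfrac14|Q|^2e^{-2u}=0$, which is \eqref{compatibility1}; one off-diagonal entry gives $Q_{\bar z}=2e^{2u}H_z$ and the other its conjugate $\bar Q_z=2e^{2u}H_{\bar z}$, which is redundant. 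This is a finite, mechanical $2\times2$ computation — the bracket $[V,U]$ has only a few nonzero contributions — so I would present it as ``a direct computation'' and perhaps display the key intermediate entries rather than every term.

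The main obstacle is purely bookkeeping: fixing sign and normalization conventions so that the basis $\{e_1,e_2,e_3\}=\{\sigma_1,-\sigma_2,i\sigma_3\}$, the metric normalization $\dd s^2=4e^{2u}(\dd x^2+\dd y^2)$, the factor $4$ (hence $|f_x|=2e^u$), the sign of $H$ in $N$'s derivative, and the sign in the definition $Q=\langle N,f_{zz}\rangle=-\langle N_z,f_z\rangle$ all conspire to produce exactly the coefficients $\tfrac12 u_z$, $-2iHe^u$, $ie^{-u}Q$ in $U$ and not some rescaled or differently-signed variant. A clean way to keep this under control is to first verify the structural claim that $U\in Lie(\SUONEONE)^{\C}$ has the form $\tfrac12\begin{pmatrix} u_z & * \\ * & -u_z\end{pmatrix}$ directly from $\partial_z(\log\det\text{-type})$ reasoning (trace-free plus the fact that the diagonal is determined by how $F$ scales the frame), and then fix the two off-diagonal entries by pairing the structure equations $f_{zz}=2u_z f_z + \tfrac12 e^{2u}\cdot(\text{something})N + Q N$-type identities with $f_z,f_{\bar z},N$. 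Once the conventions are nailed down, both halves of the lemma are routine; I would therefore keep the write-up short, stating that $U$ is computed from $\partial_z(Fe_+F^{-1})$ and $\partial_z(Fe_3F^{-1})$ together with the definitions of $u$, $H$, $Q$, that $V$ then follows from $\tau(F)=F$, and that the compatibility equations are read off the single flatness relation $V_z-U_{\bar z}+[V,U]=0$.
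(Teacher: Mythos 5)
Your approach is essentially the paper's: the authors likewise obtain $U,V$ by a direct computation from the structure equations $f_{zz}=2u_zf_z-QN$, $f_{z\bar z}=-2He^{2u}N$ together with $f_z=2e^uF\left(\begin{smallmatrix}0&1\\0&0\end{smallmatrix}\right)F^{-1}$, and the compatibility equations are read off the flatness of $\alpha$ exactly as you describe. Two bookkeeping points to fix in the write-up: the coefficient of $dz\wedge d\bar z$ in $\dd\alpha+\alpha\wedge\alpha$ is $V_z-U_{\bar z}+[U,V]$, not $V_z-U_{\bar z}+[V,U]$, and since $|f_x|=2e^u$ one has $f_z=e^uF(e_1-ie_2)F^{-1}$, i.e.\ twice what you wrote with your normalization $e_+=\tfrac12(e_1-ie_2)$.
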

\begin{proof}
This is a straightforward computation, using
 $H = \frac{1}{8}e^{-2u}\langle f_{xx} + f_{yy}, N \rangle$, 
and the consequent $f_{zz} = 2u_z f_z - QN$, 
$f_{\bar{z} \bar{z}} = 2u_{\bar{z}}f_{\bar{z}} - \bar{Q}N$, 
$f_{z \bar{z}} = -2 H e^{2u}N$, 
in addition to 
 \begin{equation}
f_z = 2 e^u F \cdot \bbar  0 & 1 \\ 0 & 0 \ebar
\cdot F^{-1} \; , \hspace{1cm}
f_{\bar z} = 2 e^u F \cdot \bbar  0 & 0 \\ 1 & 0 \ebar
\cdot F^{-1} \; . \end{equation}  
\end{proof}

\subsection{The loop group formulation and the Sym-Bobenko formula} 
Now let us insert a parameter $\lambda$ into the $1$-form $\alpha$, defining 
the family
$\alpha^\lambda := U^\lambda \dd z + V^\lambda \dd \bar{z}$, where
\begin{equation}\label{withlambda}
U^\lambda = \frac{1}{2} \begin{pmatrix} u_z & -2 i H e^u \lambda^{-1} \\ 
                         i e^{-u} Q \lambda^{-1} & -u_z \end{pmatrix} 
, \hspace{.5cm} V^\lambda = 
\frac{1}{2} \begin{pmatrix} -u_{\bar z} & -i e^{-u} \bar Q \lambda \\ 
                     2 i H e^u \lambda & u_{\bar z} \end{pmatrix} 
\; . \end{equation}
It is simple to check the following fundamental fact:
\begin{proposition}
The $1$-form $\alpha^\lambda$ satisfies the Maurer-Cartan equation
\bdm
\dd \alpha^\lambda + \alpha^\lambda \wedge \alpha^\lambda = 0
\edm
 for all $\lambda \in \C \setminus \{ 0 \}$ 
if and only if the following two conditions both hold:
\begin{enumerate}
\item
$\dd \alpha^{1} + \alpha^{1} \wedge \alpha^{1} = 0$,
\item
the mean curvature $H$ is constant.
\end{enumerate}
\end{proposition}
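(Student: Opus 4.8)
The plan is to compute $\dd \alpha^\lambda + \alpha^\lambda \wedge \alpha^\lambda$ directly in terms of the entries of $U^\lambda$ and $V^\lambda$ given in (\ref{withlambda}), and then sort the resulting matrix-valued $2$-form by powers of $\lambda$. Since $U^\lambda$ has only $\lambda^0$ and $\lambda^{-1}$ terms and $V^\lambda$ has only $\lambda^0$ and $\lambda^{1}$ terms, the expression $\partial_z V^\lambda - \partial_{\bar z} U^\lambda + [U^\lambda, V^\lambda]$ (the single essential component of the Maurer-Cartan $2$-form, after writing $\dd \alpha^\lambda + \alpha^\lambda\wedge\alpha^\lambda = (\partial_z V^\lambda - \partial_{\bar z}U^\lambda + [U^\lambda,V^\lambda])\,\dd z \wedge \dd\bar z$) is a Laurent polynomial in $\lambda$ with terms only in degrees $-1, 0, 1$. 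The Maurer-Cartan equation for all $\lambda\ne 0$ holds if and only if each of these three coefficients vanishes.

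First I would write $W^\lambda := \partial_z V^\lambda - \partial_{\bar z} U^\lambda + [U^\lambda, V^\lambda]$ and expand $W^\lambda = \lambda^{-1} W_{-1} + W_0 + \lambda W_1$. The key observation is that the off-diagonal entries of $U^\lambda$ and $V^\lambda$ are precisely where $H$ enters: the $(1,2)$-entry of $U^\lambda$ is $-iHe^u\lambda^{-1}$ and the $(2,1)$-entry of $V^\lambda$ is $iHe^u\lambda$. Differentiating these produces terms $H_{\bar z}$ and $H_z$ respectively. Collecting the $\lambda^{-1}$ coefficient, one finds $W_{-1}$ is an off-diagonal matrix whose entries are (a combination involving) $\partial_{\bar z}(He^u)$ weighted against $Q$-terms; more precisely the structure forces $W_{-1} = 0$ to be equivalent to the pair consisting of $Q_{\bar z} = 2e^{2u}H_z$ together with a term proportional to $H_{\bar z}e^u$. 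Symmetrically, $W_1 = 0$ gives the conjugate relation and a term proportional to $H_z e^u$. The $\lambda^0$ coefficient $W_0$ reproduces exactly the Gauss equation (\ref{compatibility1}) together with the same $H_z$, $H_{\bar z}$ contributions.

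The cleanest way to organize the argument: set $\lambda = 1$ first. Then $W^1 = 0$ is by hypothesis equivalent (by the previous Lemma) to (\ref{compatibility1}) and $Q_{\bar z} = 2e^{2u}H_z$. Now I would show that, given these, the remaining content of $W_{-1} = W_0 = W_1 = 0$ is exactly $H_z = H_{\bar z} = 0$, i.e. $H$ constant. Concretely: subtract the $\lambda=1$ identity to isolate the $\lambda$-dependence. Because $U^\lambda$ differs from $U^1$ only by rescaling the off-diagonal block by $\lambda^{-1}$ (and $V^\lambda$ from $V^1$ by rescaling its off-diagonal block by $\lambda$), the difference $W^\lambda - W^1$ depends only on these rescalings and vanishes for all $\lambda$ precisely when the ``cross'' terms coupling diagonal and off-diagonal parts through $\partial_z$, $\partial_{\bar z}$ cancel — and these cross terms are exactly the $H_z$, $H_{\bar z}$ (and $Q_z$, $\bar Q_{\bar z}$, which are already controlled by the second equation of (\ref{compatibility1})). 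So the only genuinely new condition is $H$ constant.

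The main obstacle is purely bookkeeping: keeping track of signs and of which entries of the $2\times 2$ matrices contribute to which power of $\lambda$, and making sure that the $\lambda^{-1}$ and $\lambda^{1}$ coefficients do not secretly impose a condition beyond $dH = 0$ once (\ref{compatibility1}) is assumed. I expect no conceptual difficulty — this is the standard "the Maurer-Cartan equation is polynomial in $\lambda$, equate coefficients" argument familiar from the Euclidean DPW story — so I would present it as: expand $W^\lambda$, observe it is supported in $\lambda$-degrees $-1,0,1$; the degree-$0$ part is $W^1$ minus corrections, the degree $\pm 1$ parts vanish iff $H_z = H_{\bar z} = 0$; combine with the preceding Lemma. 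I would relegate the explicit entry-by-entry expansion to a one-line "a direct computation shows" and simply display the three coefficient matrices $W_{-1}, W_0, W_1$, pointing out that $W_{\pm 1}$ each contain a term $\pm i e^u H_{\bar z}$ (resp. $\mp i e^u H_z$) that must vanish.
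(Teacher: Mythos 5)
Your proposal is correct and is essentially the computation the paper has in mind (the paper states the proposition as "simple to check" without writing it out): expand $\partial_z V^\lambda-\partial_{\bar z}U^\lambda+[U^\lambda,V^\lambda]$ in $\lambda$-degrees $-1,0,1$, note the degree-$0$ part is the Gauss equation and the degree-$\pm1$ parts are off-diagonal matrices with entries $ie^uH_{\bar z}$, $-\tfrac{i}{2}e^{-u}Q_{\bar z}$ (resp.\ their conjugate counterparts), and combine with the preceding Lemma. You correctly handle the one subtle point, namely that $W_{\pm1}=0$ also forces $Q_{\bar z}=0$, which under the $\lambda=1$ relation $Q_{\bar z}=2e^{2u}H_z$ is equivalent to $H_z=0$ and so imposes nothing beyond $dH=0$.
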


Note that, comparing with (\ref{matrixforms}),
 $\alpha^\lambda$ is a 1-form with values in $Lie(\hh_\tau)$,
and is integrable for all $\lambda$. Hence it can be integrated to 
obtain a map $F: \Sigma \to \hh_\tau$.
\begin{definition} \label{extendedframedef}
The map $F: \Sigma \to \hh_\tau$ obtained by integrating the 
above  1-form $\alpha^\lambda$, with the initial condition $F(0) = I$,
 is called an \emph{extended frame}
for the CMC surface $f$.
\end{definition}
\begin{remark}
Such a frame $F$ is also an extended frame for a harmonic map, as,
for each $\lambda \in \SSS^1$, $F^\lambda$ projects to a harmonic
map into $SU_{1,1}/K$, where $K$ is the diagonal subgroup. We will
not be emphasizing that aspect in this article, however.
\end{remark}

When $H$ is a nonzero constant, the Sym-Bobenko formula, at $\lambda_0 \in \SSS^1$,
 is given by:
\beqa \label{symformula}
&&\hat{f}^{\lambda_0} = -\frac{1}{2H} 
\mathcal{S}(F) \Big|_{\lambda = \lambda_0},  \\
&& \sym(F) :=  F i\sigma_3 F^{-1} + 2 i \lambda 
\partial_\lambda F \cdot F^{-1} \; . 
\eeqa

\begin{theorem} \label{symthm} $\,$ 
\begin{enumerate}
\item \label{symthm1} 
Given  a CMC $H$ surface, $f$, with  extended frame $F: \Sigma \to \hh_\tau$,
described above, the original surface $f$ is recovered,
up to a translation, from 
the Sym-Bobenko formula as $\hat{f}^1$. For other values of 
$\lambda \in \SSS^1$, $\hat{f}^\lambda$ is also a CMC $H$ surface in $\real^{2,1}$,
with 
Hopf differential given by $\lambda^{-2} Q$.
\item \label{symthm2}
Conversely, given a map $F: \Sigma \to \hh_\tau$ whose Maurer-Cartan form has 
coefficients of the form given by (\ref{withlambda}), the map $\hat{f}^\lambda$
obtained by the Sym-Bobenko formula is a CMC $H$ immersion into $\R^{2,1}$.
\item \label{symthm3}
If $D$ is any diagonal matrix, constant in $\lambda$, then
$\sym(FD) = \sym(F)$.
\end{enumerate}
\end{theorem}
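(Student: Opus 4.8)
The plan is to verify Theorem \ref{symthm} in three parts, with the third part — the statement $\sym(FD) = \sym(F)$ for a $\lambda$-independent diagonal matrix $D$ — being a short direct computation once the machinery is in place. I sketch the approach for all three, since they share a common computational core, but I expect the main effort to be in part \ref{symthm1}.

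For part \ref{symthm1}, the plan is to start from the extended frame $F^\lambda$ and compute $\partial_z$ and $\partial_{\bar z}$ of the expression $\sym(F) = F i\sigma_3 F^{-1} + 2i\lambda\,\partial_\lambda F\cdot F^{-1}$, using $F^{-1}F_z = U^\lambda$, $F^{-1}F_{\bar z} = V^\lambda$ from \eqref{withlambda}. The key identity is that $\partial_\lambda(U^\lambda) = -\lambda^{-1}\,(\text{off-diagonal-}(-1)\text{-part of }U^\lambda)$ and similarly for $V^\lambda$, so that differentiating the integrability relation $\partial_\lambda \alpha^\lambda$ lets one express $\partial_z(\lambda\partial_\lambda F\cdot F^{-1})$ in terms of $F(\cdot)F^{-1}$ with explicit $\mathfrak{sl}_2\C$-valued coefficients; combined with $\partial_z(Fi\sigma_3 F^{-1}) = F[U^\lambda, i\sigma_3]F^{-1}$, the two contributions must be arranged to produce $\hat f_z = 2e^u\lambda^{-1} F\,\mathrm{diag\text{-}free}\,F^{-1}$ matching the last display in the proof of the frame lemma (with the $\lambda^{-1}$ accounting for the Hopf differential $\lambda^{-2}Q$). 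One then checks $\langle \hat f_z,\hat f_z\rangle = 0$ and $\langle \hat f_z,\hat f_{\bar z}\rangle = 2e^{2u}$, identifying the conformal factor, computes the unit normal as $Fi\sigma_3 F^{-1}$ up to sign, and reads off $H$ from $\hat f_{z\bar z} = -2H e^{2u} N$. Evaluating at $\lambda_0 = 1$ recovers $f$ up to translation since the frame and Maurer-Cartan data then coincide with those of $f$.

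Part \ref{symthm2} is the converse and requires essentially no new work: any $F:\Sigma\to\hh_\tau$ with Maurer-Cartan coefficients of the form \eqref{withlambda} automatically has $H$ constant and satisfies \eqref{compatibility1} by the Proposition, so the computation of part \ref{symthm1} applies verbatim to show $\hat f^\lambda$ is a conformal CMC $H$ immersion.

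For part \ref{symthm3}, the plan is purely algebraic. Write $D$ for a constant diagonal matrix. Then $(FD)i\sigma_3(FD)^{-1} = F(D i\sigma_3 D^{-1})F^{-1} = F i\sigma_3 F^{-1}$ since diagonal matrices commute. For the second term, $\partial_\lambda(FD) = (\partial_\lambda F)D$ because $D$ is $\lambda$-independent, so $2i\lambda\,\partial_\lambda(FD)\cdot(FD)^{-1} = 2i\lambda\,(\partial_\lambda F)D D^{-1}F^{-1} = 2i\lambda\,\partial_\lambda F\cdot F^{-1}$. Adding the two gives $\sym(FD) = \sym(F)$. The only subtlety worth a sentence is that $D$ being diagonal is exactly what is needed for $D i\sigma_3 D^{-1} = i\sigma_3$; no positivity or reality of $D$ is required here. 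The main obstacle in the whole theorem is organizing the bookkeeping in part \ref{symthm1} so that the two terms of the Sym-Bobenko formula combine cleanly — in particular tracking the powers of $\lambda$ and the placement of the $e^u$, $H$, $Q$ entries — but this is a finite $\mathfrak{sl}_2\C$ calculation with no conceptual difficulty.
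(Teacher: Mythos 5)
Your proposal is correct and follows essentially the same route as the paper, whose proof is just a sketch of the very computation you outline: compute $\hat f_z$ and $\hat f_{\bar z}$ from the extended frame (giving $\hat f^1_z = f_z$, $\hat f^1_{\bar z}=f_{\bar z}$ for item (1)), then read off the metric, Hopf differential and mean curvature for item (2), with item (3) being the immediate algebraic check $D i\sigma_3 D^{-1}=i\sigma_3$ and $\partial_\lambda(FD)=(\partial_\lambda F)D$ that you give.
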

\begin{proof}
For \ref{symthm1}, 
one computes that $\hat f^1_z=f_z$ and $\hat f^1_{\bar z}=f_{\bar z}$, 
so $f$ and $\hat f^1$ are the same surface up to translation. 
For other values of $\lambda$, see item \ref{symthm2}. 
To prove \ref{symthm2}, one computes $\hat{f}_z$ and $\hat{f}_{\bar{z}}$, 
and then the metric, the Hopf differential and the mean curvature.
Item \ref{symthm3} of the theorem is obvious.
\end{proof}
The family of CMC surfaces $\hat{f}^\lambda$ is called the 
\emph{associate family} for $f$. The invariance of the Sym-Bobenko
formula with respect to right multiplication by a diagonal matrix
is due to the fact that the surface is determined by its Gauss map,
given by the equivalence class of the frame in $SU_{1,1}/K$.

By direct computation using the first and second fundamental forms, we have: 
\begin{lemma} The surfaces 
\beqas
 \hat f^{1}_{||} &=& -\frac{1}{2H} \Ad_{\sigma_1}
\mathcal{S}(\Ad_{\sigma_1} F) \Big|_{\lambda = 1} \\
&=&
- \tfrac{1}{2 H} \left[ - F i \sigma_3 
F^{-1} + 2 i \lambda \partial_\lambda F \cdot F^{-1} 
\right]_{\lambda=1} \; , \\
\hat f^{1}_{K} &=& - \tfrac{1}{2 H} \left[ 
0 + 2 i \lambda \partial_\lambda F \cdot F^{-1} 
\right]_{\lambda=1} 
\eeqas
are the parallel CMC $-H$ surface and the parallel 
constant Gaussian curvature $-4H^2$ surfaces, respectively, to $\hat f^1$.  
\end{lemma}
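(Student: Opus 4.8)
The plan is to recognise both $\hat f^{1}_{||}$ and $\hat f^{1}_{K}$ as classical \emph{parallel surfaces} of $\hat f^{1}$, and then to carry out the direct computation of their fundamental forms. First, the second displayed expression for $\hat f^{1}_{||}$ in the statement is immediate: $\sigma_1$ is constant in $\lambda$ and $\Ad_{\sigma_1}(i\sigma_3) = -i\sigma_3$ (since $\sigma_1\sigma_3\sigma_1 = -\sigma_3$), so conjugating the Sym--Bobenko formula \eqref{symformula} gives $\Ad_{\sigma_1}\mathcal{S}(\Ad_{\sigma_1}F) = -F\,i\sigma_3\,F^{-1} + 2i\lambda\,\partial_\lambda F\cdot F^{-1}$. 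Writing $N := F\,i\sigma_3\,F^{-1}\big|_{\lambda=1}$ for the unit normal of $\hat f^{1}$ (so $\langle N,N\rangle = -1$) and recalling $\hat f^{1} = -\tfrac1{2H}\mathcal{S}(F)\big|_{\lambda=1}$, subtraction yields $\hat f^{1}_{||} = \hat f^{1} + \tfrac1H N$ and $\hat f^{1}_{K} = \hat f^{1} + \tfrac1{2H} N$. Thus the two surfaces are the parallels of $\hat f^{1}$ at oriented distances $1/H$ and $1/(2H)$.

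Next I would do the direct computation. From $N_z = F[U,i\sigma_3]F^{-1}$, using the Maurer--Cartan coefficient $U$ of \eqref{UhatandVhat} together with $f_z = 2e^u F\,\bbar 0 & 1 \\ 0 & 0\ebar F^{-1}$ and $f_{\bar z} = 2e^u F\,\bbar 0 & 0 \\ 1 & 0\ebar F^{-1}$, a one-line matrix computation gives $N_z = -H f_z - \tfrac12 e^{-2u}Q\,f_{\bar z}$ and, by conjugation, $N_{\bar z} = -H f_{\bar z} - \tfrac12 e^{-2u}\bar Q\,f_z$. Hence, in the complexified frame $\{f_z, f_{\bar z}\}$ and with the convention $dN = -df\circ S$, the shape operator of $\hat f^{1}$ is $S = \bbar H & \tfrac12 e^{-2u}\bar Q \\ \tfrac12 e^{-2u}Q & H \ebar$; in particular $\operatorname{tr} S = 2H$, and the Lorentzian Gauss equation $K = \langle N,N\rangle\det S = -\det S$ reproduces $K_{\hat f^{1}} = -e^{-2u}u_{z\bar z}$, consistent with \eqref{compatibility1}. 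For $g = \hat f^{1} + tN$ one then has $dg = df\circ(I - tS)$, so on the open set where $I - tS$ is invertible the image of $dg$ equals the (spacelike) tangent plane of $\hat f^{1}$; whence $g$ is a spacelike immersion there, with the same normal $N$ and shape operator $S_g = S(I - tS)^{-1}$, so that its principal curvatures are $\kappa_i/(1 - t\kappa_i)$ with $\kappa_1 + \kappa_2 = 2H$.

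It remains to substitute the two values of $t$. Using $2H - \kappa_1 = \kappa_2$ and $2H - \kappa_2 = \kappa_1$: at $t = 1/H$ one finds $\operatorname{tr} S_g = \tfrac{H\kappa_1}{H-\kappa_1} + \tfrac{H\kappa_2}{H-\kappa_2} = -2H$, so $\hat f^{1}_{||}$ is CMC $-H$; at $t = 1/(2H)$ one finds $\det S_g = \tfrac{4H^2\kappa_1\kappa_2}{(2H - \kappa_1)(2H - \kappa_2)} = 4H^2$, so $\hat f^{1}_{K}$ has constant Gaussian curvature $K_g = -\det S_g = -4H^2$. The assertions hold on the open dense set on which $g$ is an immersion, whose complement is $\{Q = 0\}$ for $\hat f^{1}_{||}$ and the set where a principal curvature of $\hat f^{1}$ vanishes (equivalently, equals $2H$) for $\hat f^{1}_{K}$.

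I expect the only real obstacle to be sign bookkeeping in the Lorentzian setting: keeping the orientation of $N$ consistent between $\hat f^{1}$ and its parallel (this is what pins the mean curvature to $-H$ rather than $+H$), the conventions relating $S$ to $dN$, and above all the sign $K = -\det S$ in the Gauss equation for a spacelike surface with timelike normal --- precisely the sign that turns the naive answer $+4H^2$ into $-4H^2$. Beyond these sign issues the calculation is routine, and one must only remember to state the conclusions on the open set where the parallel surface is a genuine immersion.
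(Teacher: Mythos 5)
Your proof is correct and is exactly the ``direct computation using the first and second fundamental forms'' that the paper invokes without writing out: identifying $\hat f^{1}_{||}=\hat f^{1}+\tfrac1H N$ and $\hat f^{1}_{K}=\hat f^{1}+\tfrac1{2H}N$, computing the shape operator from \eqref{UhatandVhat}, and evaluating $\operatorname{tr}S_g$ and $\det S_g$ for the parallel surfaces. All signs check against the paper's conventions (in particular $\langle N,N\rangle=-1$ and $K=-\det S$ via \eqref{compatibility1}), so nothing further is needed.
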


\subsection{Extending the construction to $G$}
In the formulation above we used the group $SU_{1,1}$, 
but we can use the bigger group $G$ 
instead, and allow the extended frame to take values in  
$\hh = \hh_\tau \sqcup \Psi(i \sigma_1) \cdot \hh_\tau$.  
If we integrate the 1-form $\alpha^\lambda$ above, with the initial condition
$\hat{F}(0) = \Psi(i\sigma_1)$ instead of the identity, we obtain a frame, $\hat{F} = \Psi(i\sigma_1) F$,
 with values in
$\Psi(i\sigma_1) \cdot \hh_\tau$.  But 
$\sym(\Psi(i \sigma_1) F) = - \textup{Ad}_{\sigma_1} \sym(F) + \textup{translation}$,
and the effect of 
$- \textup{Ad}_{\sigma_1}$  on the surface is just 
an isometry of $\R^{2,1}$, and so a CMC surface is obtained.
Similarly, it is clear that we can replace $\hh_\tau$ with $\hh$ in the converse part of Theorem \ref{symthm}.

\subsection{The DPW method for $\R^{2,1}$}\label{section3-wayne}

Here we give the  holomorphic 
representation of the extended frames constructed above.
To see how it works in practice, consult the examples below,
in Section \ref{examplessect}.

On a simply-connected 
Riemann surface $\Sigma$ with local coordinate $z=x+i y$, we define a 
{\em holomorphic potential} as an $\mathfrak{sl}_2 \C$-valued 
$\lambda$-dependent $1$-form 
\[ \xi = A(z,\lambda) dz = \begin{pmatrix} 
\sum_{j=0}^\infty c_{2j} \lambda^{2j} & 
\sum_{j=0}^\infty a_{2j-1} \lambda^{2j-1} \\ 
\sum_{j=0}^\infty b_{2j-1} \lambda^{2j-1} & 
-\sum_{j=0}^\infty c_{2j} \lambda^{2j} \end{pmatrix} dz \; , \]  where 
the $a_j dz, b_j dz, c_j dz$ are all holomorphic 
$1$-forms defined on $\Sigma$, and 
$a_{-1}$ is never zero.  

Choose a solution $\phi : \Sigma \to 
\hh^\C$ of $d\phi = \phi \xi$, and 
$G$-Iwasawa split $\phi=F B$ with $F: 
\Sigma \to \hh$ and 
$B :\Sigma \to \uhat$ whenever 
$\phi \in \mathcal{B}_{1,1}$.  Expanding 
\bdm
B = 
\bbar \rho & 0 \\ 0 & \rho^{-1} \ebar
+ \mathcal{O}(\lambda), \hspace{1cm} \rho(z,\bar{z}) \in \R^+,
\edm 
and, noting that 
\bdm
F^{-1} dF = BAB^{-1} dz - dB \cdot B^{-1}
\edm
 and 
$\tau({F^{-1} dF}) =  F^{-1} dF$, one deduces that 
\beqas
\begin{aligned}
&F^{-1} dF =
 \mathcal{A}_1 dz + \mathcal{A}_2 dz + 
\tau(\mathcal{A}_2) d\bar z +  \tau(\mathcal{A}_1) 
d\bar z \; ,\\
&\mathcal{A}_1={\bbar  0 & \lambda^{-1} \rho^2 a_{-1}\\
    \lambda^{-1} \rho^{-2} b_{-1} & 0 \ebar}  , \hspace{1cm} 
    \mathcal{A}_2= \bbar   \tfrac{\rho_z}{\rho} & 0 \\ 
       0 & -\tfrac{\rho_z}{\rho} \ebar . 
\end{aligned}
\eeqas

Take any nonzero real constant $H$.  
Substituting $w = \tfrac{i}{H} \int a_{-1} dz$,
$Q=-2 H \tfrac{b_{-1}}{a_{-1}}$ and $\rho = e^{u/2}$, we have 
$F^{-1} dF = U^\lambda dw+V^\lambda d\bar w$ for 
$U^\lambda(w)$, $V^\lambda(w)$ as 
in Section \ref{section2}.  By Theorem \ref{symthm},  $F$ is an extended frame
for a family of spacelike CMC $H$ immersions. 
\begin{remark} 
The invariance of the Sym-Bobenko formula, pointed out
in Theorem \ref{symthm}, shows that we did not need to choose the
unique $F \in \hh$  given by the normalization
 $B \in \uhat$ in
our splitting of $\phi$ above,
because the freedom for $F$ (Theorem \ref{section1-thm:SU11Iwa}) is postmultiplication by $\hh^0$,
which consists of diagonal matrices. 
The normalized choice of $B$, however, will be used sometimes, as
it captures some information 
about the metric of the surface in terms of $\rho$.

We also point out that allowing $a_{-1}$ to have zeros will result in
a surface with branch points at these zeros.
\end{remark}

We have proved one direction of the following theorem, which gives a 
holomorphic representation 
for all non-maximal CMC spacelike surfaces in $\R^{2,1}$.   
 In the converse statement, the main issue is that we do not 
assume $\Sigma$ is simply-connected, which can be important for
applications: see, for example \cite{DorH:cyl}, \cite{DH98}.

\begin{theorem} \label{dpwthm}
 (Holomorphic representation for spacelike 
CMC surfaces in $\R^{2,1}$)
Let 
\bdm
\xi = \sum_{i=-1}^\infty A_i \lambda^i \dd z 
~~ \in ~ Lie(\hh^\C) \otimes \Omega ^{1,0} (\Sigma)
\edm
be a holomorphic 1-form over a simply-connected Riemann surface 
$\Sigma$, with
\bdm
 a_{-1} \neq 0,
\edm
on $\Sigma$, where $A_{-1} = {\small{\bbar 0 & a_{-1} \\ b_{-1} & 0 \ebar}}$. Let 
$\phi :\Sigma \to \hh^\C$ be 
a solution of 
\bdm
\phi^{-1} d\phi=\xi.
\edm  
Define the open set 
$\Sigma^\circ := \phi ^{-1} (\mathcal{B}_{1,1})$,
 and take any  $G$-Iwasawa splitting on $\Sigma^\circ$: 
\beq \label{thmsplit}
\phi = F B, \hspace{1.5cm} F \in \hh, \hspace{.2cm} B \in \hh^\C_+.
\eeq
  Then for any $\lambda_0 \in \SSS^1$, the map
$f^{\lambda_0} := \hat f^{\lambda_0}: \Sigma^\circ \to \mathbb{R}^{2,1}$, given by
the Sym-Bobenko formula \eqref{symformula}, is a conformal CMC $H$ immersion,
and is independent of the choice of $F$ in (\ref{thmsplit}).

Conversely, let $\Sigma$ be a non-compact Riemann surface.  Then 
any non-maximal conformal CMC spacelike immersion 
from $\Sigma $ into $\R^{2,1}$ can be constructed in this manner, 
using a holomorphic potential $\xi$ that is well defined on $\Sigma$.   
\end{theorem}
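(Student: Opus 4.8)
The forward implication is already essentially in hand from the computation preceding the statement: given $\xi$, a solution $\phi$ of $\phi^{-1}d\phi=\xi$, and a $G$-Iwasawa splitting $\phi=FB$ on $\Sigma^\circ$, the displayed formula for $F^{-1}dF$ together with the substitutions $w=\tfrac iH\int a_{-1}\,dz$, $Q=-2H b_{-1}/a_{-1}$, $\rho=e^{u/2}$ shows $F^{-1}dF=U^\lambda dw+V^\lambda d\bar w$ with $U^\lambda,V^\lambda$ as in \eqref{withlambda}, and Theorem~\ref{symthm} then gives that $\hat f^{\lambda_0}$ is a conformal CMC $H$ immersion. The independence of the choice of $F$ I would get from Theorem~\ref{section1-thm:SU11Iwa}, item (2): two $G$-Iwasawa factors of $\phi$ differ by right multiplication by an element of $\hh^0$, which is diagonal and $\lambda$-independent, so $\sym$ is unchanged by Theorem~\ref{symthm}, item (3). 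Thus the content to be proved is the converse.

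For the converse I would start from a non-maximal conformal CMC spacelike immersion $f:\Sigma\to\R^{2,1}$, with (necessarily nonzero) constant mean curvature $H$. As in Section~\ref{section2}, the conformal structure, the function $u$ of \eqref{eqn:dssquared}, and the Hopf differential determine the $\lambda$-family of $Lie(\hh)$-valued $1$-forms $\alpha^\lambda=U^\lambda dz+V^\lambda d\bar z$ of \eqref{withlambda}, which descends to $\Sigma$; integrating it on the universal cover $\widetilde\Sigma$ with $F(z_0)=I$ produces the extended frame $F:\widetilde\Sigma\to\hh$ of $f$, and since $\alpha^\lambda$ descends, $F$ carries a monodromy homomorphism $\chi:\pi_1(\Sigma)\to\hh$ with $F(\gamma\cdot z)=\chi(\gamma)\,F(z)$. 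The plan is then to gauge $F$ by a positive loop so as to annihilate its $(0,1)$-part while keeping the monodromy purely on the left: I seek $B:\Sigma\to\hh^\C_+$, defined on \emph{all} of $\Sigma$, solving $\bar\partial B=-(V^\lambda d\bar z)\,B$. The $(0,1)$-form $V^\lambda d\bar z$ has values in $Lie(\hh^\C_+)$ and descends to $\Sigma$, hence defines a $\bar\partial$-operator on the $\hh^\C_+$-bundle over $\Sigma$; it is automatically integrable because $\Sigma$ carries no $(0,2)$-forms, and since $\Sigma$ is non-compact it is Stein, so — filtering $\hh^\C_+$ by order of vanishing in $\lambda$ and applying Cartan's Theorem~B / Grauert's Oka principle degree by degree, as in the treatment of the cylinder in \cite{DorH:cyl,DH98} and the general case of \cite{DorPW} — this holomorphic bundle is holomorphically trivial, i.e.\ such a $B$ exists. \emph{This existence step is the main obstacle;} everything else is formal.

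Granting $B$, I would set $\phi:=FB:\widetilde\Sigma\to\hh^\C$. Then $\xi:=\phi^{-1}d\phi=B^{-1}\alpha^\lambda B+B^{-1}dB$ has vanishing $(0,1)$-part by the choice of $B$, hence is a $(1,0)$-form $A(z,\lambda)\,dz$; the Maurer--Cartan equation $d\xi+\xi\wedge\xi=0$ together with $(A\,dz)\wedge(A\,dz)=0$ forces $\partial_{\bar z}A=0$, so $\xi$ is holomorphic in $z$. In the $\lambda$-expansion, the term $B^{-1}(\lambda^{-1}U_{-1})B$ contributes the unique most-negative piece $\lambda^{-1}\Ad_{B(0)^{-1}}U_{-1}$, while all other terms of $B^{-1}\alpha^\lambda B$ and all of $B^{-1}\partial_z B$ have non-negative $\lambda$-powers, so $\xi=\sum_{i\ge-1}A_i\lambda^i\,dz$ with $A_{-1}=\Ad_{B(0)^{-1}}U_{-1}$; since $B(0)\in\hh^\C_+$ is diagonal (the odd entries vanish at $\lambda=0$) and the $(1,2)$-entry of $U_{-1}$ is $-iHe^u\neq0$, the upper-right entry $a_{-1}$ of $A_{-1}$ is nowhere zero. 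Finally, because $B$ descends to $\Sigma$ we get $\phi(\gamma\cdot z)=\chi(\gamma)\,\phi(z)$, so $\xi(\gamma\cdot z)=\phi^{-1}\chi(\gamma)^{-1}d\!\left(\chi(\gamma)\phi\right)=\xi(z)$: the potential $\xi$ is well defined on $\Sigma$, of the required form.

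It then remains to check that this $\xi$ reconstructs $f$. Since $\phi=FB$ with $F\in\hh$ and $B\in\hh^\C_+$, the loop $\phi$ lies in $\mathcal{B}_{1,1}$ at every point (so the associated $\Sigma^\circ$ is all of $\Sigma$), and its $G$-Iwasawa factor is $F$ up to an element of $\hh^0$; applying the Sym--Bobenko formula to $\phi$ therefore returns $\hat f^1=f$ up to translation, by Theorem~\ref{symthm}, items (1) and (3). The only place I expect real work is the bundle-triviality assertion of the third paragraph; the rest is bookkeeping with $\lambda$-degrees and the invariance properties already recorded in Theorems~\ref{symthm} and~\ref{section1-thm:SU11Iwa}.
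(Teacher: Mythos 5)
Your proposal is correct and follows essentially the same route as the paper: the forward direction is the computation already carried out in Section \ref{section3-wayne} plus the invariance of $\sym$ under right multiplication by $\hh^0$, and the converse is exactly the extended-frame-plus-global-$\bar\partial$-trivialization argument of Dorfmeister--Pedit--Wu, with the non-compact (hence Stein) case handled by Bungart's generalization of Grauert's theorem on triviality of holomorphic Banach bundles. The paper simply cites \cite{DorPW} and \cite{bungart} for what you have written out in detail, and you correctly identify the bundle-triviality step as the one piece of genuine analytic content.
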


\begin{proof}
The only point remaining to prove is the converse statement. This
follows from our construction of the extended frame associated to
any such surface, together with 
the argument in \cite{DorPW} (Lemma 4.11 and the Appendix)
given  for the case that $\Sigma$ is contractible.
However, the latter argument is also valid if $\Sigma$ is any
 non-compact Riemann surface: the global statement only 
depends on the generalization of Grauert's Theorem given in \cite{bungart},
that any holomorphic vector bundle over a Stein manifold (such as a non-compact
Riemann surface, see \cite{grauertremmert} Section 5.1.5)
with fibers in a Banach space, is trivial.
\end{proof}

\begin{remark}  \label{metricremark}
We also showed above that if we normalize the factors 
in (\ref{thmsplit}) so that $B \in \uhat$,
and define the function $\rho: \Sigma^\circ \to \real$ by $B|_{\lambda=0} = \textup{diag}(\rho, \rho^{-1})$, then there exist conformal coordinates 
$\tilde z= \tilde x+i \tilde y$ on $\Sigma$
such that the induced metric for  $f^1$ is given by
\bdm
\dd s^2 = 4 \rho^4 (\dd \tilde x^2 + \dd \tilde y^2),
\edm
and the Hopf differential is given by $Q \dd \tilde z^2$, where $Q = -2H\frac{b_{-1}}{a_{-1}}$.
\end{remark}
\subsection{Preliminary examples} \label{examplessect}
We conclude this section with three  examples: 

\begin{example}\label{exa:cylinders} A cylinder over a hyperbola in $\R^{2,1}$.  
Let 
\bdm
\xi = {\small{\bbar  0 & \lambda^{-1} dz \\ \lambda^{-1} dz & 0\ebar}},
\edm
 on $\Sigma = \C$.  Then one solution $\phi$ of $d\phi = \phi \xi$ is 
\bdm
\phi = 
\exp \Big\{ \bbar 0 &  z \lambda^{-1} \\ z \lambda^{-1} &0 \ebar \Big\}, 
\edm
which has the Iwasawa splitting
$\phi = F \cdot B$,
where
\bdm
F = \exp \Big\{ 
\bbar  0 & z \lambda^{-1} + \bar z \lambda \\ z \lambda^{-1} + \bar z \lambda & 0\ebar \Big\}, \hspace{1cm} 
B = \exp  \Big\{ \bbar 0 & -\bar z\lambda \\ -\bar z\lambda & 0 \ebar \Big\} ,
\edm
take values in $\hh$ and $\uhat$ respectively.  
The Sym-Bobenko formula $\hat f^1$ gives the surface 
\bdm
\frac{-1}{2H} \cdot 
[ 4 y, \, -\sinh(4 x), \,\cosh(4 x) ],
\edm
in $\R^{2,1} = \{ [x_1,x_2,x_0] := 
x_1e_1+x_2e_2+x_0e_3 \}$.
  The image is the set 
\bdm
\{ [x_1,x_2,x_0]  ~|~ x_0^2-x_2^2 = \tfrac{1}{4H^2} \},
\edm
 which is a 
 cylinder over a hyperbola.  
\end{example}

\begin{example}\label{exa:spheres} The hyperboloid of two sheets.   
Let 
\bdm
\xi = \bbar 0 & \lambda^{-1}\\ 0&0 \ebar dz,
\edm
 on $\Sigma=\C$.  
Then one solution of $d \phi = \phi \xi$ is 
\bdm
\phi = \bbar  1 & z \lambda^{-1} \\ 0 &1 \ebar,
\edm
which takes values in  
$\mathcal{B}_{1,1}$ for 
$|z| \neq 1$. For these values of $z$, the $G$-Iwasawa splitting is
 $\phi
= F \cdot B$ with $F : \Sigma \setminus 
\SSS^1 \to \hh$ and 
$B:  \Sigma \setminus \SSS^1 
\to \uhat$, where 
\beqas
 F &=& \frac{1}{\sqrt{\varepsilon (1-|z|^2)}} 
\begin{pmatrix} \varepsilon & z \lambda^{-1} \\ 
\varepsilon \bar z \lambda & 1 \end{pmatrix} ,\\
B &=& \frac{1}{\sqrt{\varepsilon (1-|z|^2)}}
 \bbar  1 & 0 \\ -\varepsilon \bar{z} \lambda & \varepsilon(1-z \bar{z}) \ebar,
 \hspace{1cm}
\varepsilon  = \text{sign}(1-|z|^2) \; . 
\eeqas
Then the Sym-Bobenko formula gives 
\bdm
\hat{f}^1(z) = \frac{1}{H (x^2+y^2-1)} \cdot [ 2y, \, -2x, \, (1+3x^2+3y^2)/2],
\edm 
whose image is the two-sheeted hyperboloid
 $\{ x_1^2 + x_2^2 -(x_0-\frac{1}{2H})^2 = -\frac{1}{H^2} \}$,
that is, two copies of a hyperbolic plane  of constant 
curvature $-H^2$.  
For this example, we are in a small cell precisely when $|z|=1$.  In this 
case, we can write $\phi$ as a product of a loop in $\hh_\tau$ 
times $\omega_2$ times a loop in $\hh^\C_+$, as follows: 
\[ \begin{pmatrix}
1 & z \lambda^{-1} \\ 0 & 1 
\end{pmatrix} = 
\begin{pmatrix}
p \sqrt{z} & \lambda^{-1} q \sqrt{z} \\ 
\lambda q \sqrt{z}^{-1} & p \sqrt{z}^{-1} 
\end{pmatrix} 
\cdot \omega_2 \cdot 
\begin{pmatrix}
(p+q) \sqrt{z}^{-1} & 0 \\ 
- \lambda q \sqrt{z}^{-1} & (p-q) \sqrt{z}
\end{pmatrix} \; , 
\] where $p^2-q^2=1$ and $p$, $q \in \real$.  Hence $\phi \in 
\mathcal{P}_{2}$ for $|z|=1$.
\end{example}

\begin{example}\label{beaksexample}
The first two examples were especially simple, so that we were able to perform the
 Iwasawa splitting   explicitly. This is not possible, in general.
  However, it can always be approximated numerically, using, for example,
 the program XLab \cite{xlab},
 and images of the surface corresponding to an arbitrary potential
$\xi$ can be produced. For example, taking the potential 
$\xi = \lambda^{-1} \cdot {\small{\bbar 0& 1 \\  \, 100 z & 0 \ebar}} dz$,
and integrating with the initial condition $\phi(0)=\omega_1$,
we obtain, numerically, a surface with  a singularity that
appears to have the topology of a Shcherbak surface \cite{shcherbak}
 singularity at $z=0$. The Shcherbak surface singularity is of the form
 $(u, ~v^3 +uv^2, ~12 v^5 + 10u v^4)$.
 The singularity from our construction is displayed
 in Figure \ref{fg:7}. Since $\phi(0) = \omega_1$, this singularity 
is arising when $\phi$ takes values in $\mathcal{P}_1$. 
 \end{example}


\begin{figure}[here]
\begin{center}
\includegraphics[width=.7\linewidth]{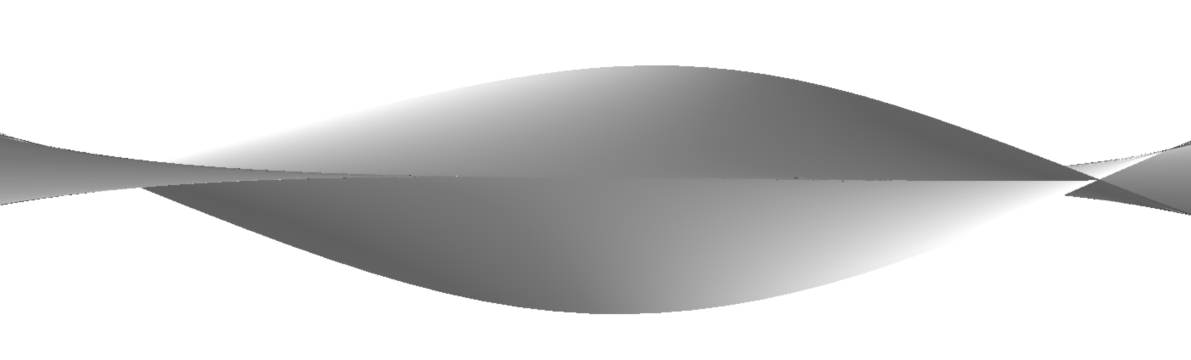}
\end{center}
\caption{The singularity appearing in  Example \ref{beaksexample}}
\label{fg:7}
\end{figure}


\section{Behavior of  the Sym-Bobenko formula on the boundary of the big cell}
\label{brander-section}

We saw in Example \ref{exa:spheres} an instance of a surface which
blows up as the boundary of the big cell is approached. 
On the other hand, in Example \ref{beaksexample}, we have
a case where finite singularities occur. We now want
to examine what behavior can be expected in general.

Let $\phi: \Sigma \to \hh^\C$ be a holomorphic map in accordance with
the construction of 
Theorem \ref{dpwthm}, and $\Sigma^\circ := \phi^{-1}(\mathcal{B}_{1,1})$.
We also assume that $\phi$ maps at least one point into $\mathcal{B}_{1,1}$,
so that $\Sigma^\circ$ is not empty.
Set 
\bdm
C:= \Sigma \setminus \Sigma^\circ = \bigcup_{j=1}^\infty \phi^{-1} (\mathcal{P}_j).
\edm

\begin{theorem} \label{summarythm1}
Let $\phi$ be as above, and assume that $\Sigma$ is simply connected.
 Then $\Sigma^\circ$ is open and dense in $\Sigma$. 
More precisely, its complement, the set $C$, 
is locally given as the zero set 
 of a non-constant real analytic function from some open set $W \subset \Sigma$ 
 to $\C$.
\end{theorem}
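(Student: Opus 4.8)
The plan is to reduce Theorem \ref{summarythm1} to the already-established structure of the Iwasawa big cell in $\hh^\C$, namely item \ref{mainthm-(3)} of Theorem \ref{section1-thm:SU11Iwa}. Recall that that theorem tells us the complement of $\mathcal{B}_{1,1}$ in $\hh^\C$ is locally the zero set of a non-constant real analytic function $g: \hh^\C \to \C$. Since $C = \Sigma \setminus \Sigma^\circ = \phi^{-1}(\hh^\C \setminus \mathcal{B}_{1,1})$, the obvious candidate for the local defining function on an open set $W \subset \Sigma$ is the composition $g \circ \phi$. The map $\phi$ is holomorphic (hence real analytic) and $g$ is real analytic, so $g \circ \phi: W \to \C$ is real analytic, and $C \cap W$ is exactly its zero set. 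Thus the only real content is to show that, on a neighbourhood $W$ of any given point of $C$, the function $g \circ \phi$ is \emph{not} identically zero.

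The heart of the argument, then, is a local non-triviality (or identity-theorem) statement: if $g \circ \phi$ vanished identically on an open subset $W$ of $\Sigma$, then $\phi(W) \subset \Sigma \setminus \mathcal{B}_{1,1} = \bigsqcup_{m} \mathcal{P}_m$, contradicting the hypothesis that $\Sigma^\circ$ is nonempty — \emph{provided} we know $\Sigma^\circ$ is dense, which is what we are trying to prove. So the logic has to be arranged carefully. The clean way is: first show $\Sigma^\circ$ is dense directly, using the holomorphicity of $\phi$ together with the openness and density of $\mathcal{B}_{1,1}$, and \emph{then} deduce the real-analytic zero-set description. For density: suppose for contradiction that some nonempty open $W \subset \Sigma$ is disjoint from $\Sigma^\circ$, so $\phi(W) \subset \hh^\C \setminus \mathcal{B}_{1,1}$. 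Take the holomorphic determinant-type section $\mu$ (pulled back to $h^*\mu$ via $h(\psi) = \tau(\psi)^{-1}\psi$) whose zero set is the complement of the big cell, as in the proof of Theorem \ref{thm:SU11Iwa}; then $h^* \mu$ vanishes on $\phi(W)$. Pulling back once more, the holomorphic function (a local representative of) $\phi^*(h^*\mu)$ vanishes on the open set $W$ in the Riemann surface $\Sigma$; since $\Sigma$ is connected and this is a holomorphic section of a holomorphic line bundle, by the identity theorem it vanishes on all of $\Sigma$, forcing $\phi(\Sigma) \cap \mathcal{B}_{1,1} = \emptyset$ and contradicting the assumption that $\phi$ maps at least one point into $\mathcal{B}_{1,1}$. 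Hence $\Sigma^\circ$ is dense; it is open because $\mathcal{B}_{1,1}$ is open and $\phi$ continuous.

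For the refined statement, fix $p \in C$ and choose $W$ a connected open neighbourhood of $p$ small enough that the local real analytic representative $g: \hh^\C \to \C$ of the section $h^*\mu$ from Theorem \ref{section1-thm:SU11Iwa}\ref{mainthm-(3)} is defined on a neighbourhood of $\phi(\overline{W})$ (shrinking $W$ as necessary). Set $\tilde g := g \circ \phi: W \to \C$; it is real analytic, and $C \cap W = \tilde g^{-1}(0)$. It remains to note $\tilde g$ is non-constant: if it were the constant $0$ then $W \subset C$, contradicting density of $\Sigma^\circ$ just established; and it cannot be a nonzero constant since $p \in C \cap W$ gives $\tilde g(p) = 0$. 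This completes the proof.

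The main obstacle is organizational rather than technical: one must be careful not to invoke density of $\Sigma^\circ$ while proving it, and one must make precise the passage from ``section of a line bundle vanishing on an open set'' to ``vanishing identically,'' which relies on $\Sigma$ being connected and on the section being a genuine holomorphic (not merely smooth) object — this is why pulling back the holomorphic section $\mu$ from the Birkhoff picture, rather than working with the real analytic local representative $g$ directly, is the right move for the density step. A minor point to check is that $h \circ \phi$ is holomorphic: $h(\psi) = \tau(\psi)^{-1}\psi$ involves $\tau$, which is anti-holomorphic, so $h$ itself is only real analytic; hence $h^*\mu$ is a priori only real analytic. To keep a holomorphic section for the identity-theorem argument one should instead observe, as in the cited proof, that along $\phi$ the relevant vanishing locus $\phi^{-1}(C)$ coincides with the zero locus of a holomorphic quantity built from the Birkhoff data of $\tau(\phi)^{-1}\phi$ — or, more simply, accept the real analytic local representative $\tilde g$ and replace the complex identity theorem by its real analytic analogue: a real analytic function on a connected real analytic manifold vanishing on a nonempty open set vanishes identically. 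That real analytic identity theorem is standard and suffices, so the argument goes through cleanly either way.
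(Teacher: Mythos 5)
Your final argument is correct and is essentially the paper's proof: the paper simply pulls back the real analytic section $s$ (i.e.\ $h^*\mu$) to a real analytic section over $\Sigma$ whose zero set is $C$, and concludes from the real analytic identity theorem on the connected surface $\Sigma$, together with the standing assumption that $\Sigma^\circ$ is nonempty, that $C$ has open dense complement and the local defining function is non-constant. Your closing self-correction --- that $h(\psi)=\tau(\psi)^{-1}\psi$ is only real analytic because $\tau$ is anti-holomorphic, so one must use the real analytic rather than the complex identity theorem --- is exactly right and lands you on the paper's argument.
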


\begin{proof}
This follows from item  \ref{mainthm-(3)} of Theorem \ref{section1-thm:SU11Iwa}:
the union of the small cells is given as the zero set of a real analytic section
$s$ of a real analytic line bundle on $\hh^\C$ (see the proof of  Theorem
\ref{thm:SU11Iwa}). Thus $C$ is given as the zero set of $\phi^* s$, which 
is also a real analytic section of a real analytic line bundle.  Since we assume
that  the complement
of $C$ contains at least one point, it follows that this set is open and dense. 
\end{proof}

For the first two small cells, for which the analysis is the least complicated,
 we will prove more
specific information: set
\bdm
C_1 := \phi^{-1} (\mathcal{P}_1),
\hspace{1cm}
C_2 := \phi^{-1} (\mathcal{P}_2).
\edm

\begin{theorem} \label{summarythm}  Let $\phi$ be as given in
Theorem \ref{summarythm1}. Then:
\begin{enumerate}

\item  \label{summary1}
The sets $\Sigma^\circ \cup C_1$ and $\Sigma^\circ \cup C_2$ 
are both {open} subsets of $\Sigma$.
The sets $C_i$ are each locally given as the zero set 
 of a non-constant real analytic function $\R^2 \to \R$.   
\item \label{summary2}
All components of the matrix $F$ obtained by Theorem \ref{dpwthm} on $\Sigma^\circ$,
and evaluated at $\lambda_0 \in \SSS^1$, blow up as $z$ approaches a point $z_0$ in
either $C_1$ or $C_2$. In the limit, the unit normal vector $N$, to the corresponding
surface, becomes asymptotically lightlike, i.e. its length in the Euclidean
$\real^3$ metric approaches infinity.
\item \label{summary3}
The surface $f^{\lambda_0}$ obtained from Theorem \ref{dpwthm}
 extends to a real analytic map 
$\Sigma^\circ \cup C_1 \to \R^{2,1}$, but 
is not immersed at points $z_0  \in C_1$.
\item \label{summary4}
The surface $f^{\lambda_0}$ diverges to  $\infty$ as
$z \to  z_0 \in C_2$. Moreover, the induced metric on the surface blows
up as such a point in the coordinate domain is approached.
\end{enumerate}

\end{theorem}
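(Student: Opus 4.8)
The plan is to localize near a chosen point $z_0\in C_m$ ($m=1$ or $2$), put $\phi$ into a normal form in which Lemma~\ref{switchlemma} applies verbatim, and then read off all four assertions from the resulting explicit factorization of the Iwasawa frame. For the normalization: since $\phi(z_0)\in\mathcal{P}_m=\hh_\tau\cdot\omega_m\cdot\hh^\C_+$, and one checks that $\omega_m^{-1}=(i\sigma_3)\,\omega_m\,(i\sigma_3)^{-1}$ also lies in $\mathcal{P}_m$ (because $i\sigma_3\in\hh_\tau$ and $(i\sigma_3)^{-1}\in\hh^0\subset\hh^\C_+$), I may replace $\phi$ by $c\,\phi\,b$ for suitable $z$--independent $c\in\hh$, $b\in\hh^\C_+$ so that $\phi(z_0)=\omega_m^{-1}$; by the discussion of §\ref{section2}, left multiplication by a constant loop in $\hh$ changes $f^{\lambda_0}$ only by a rigid motion of $\R^{2,1}$ and right multiplication by a constant loop in $\hh^\C_+$ leaves $f^{\lambda_0}$ unchanged, so this costs nothing. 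Then $\phi(z)\,\omega_m$ is holomorphic with value $I\in\mathcal{B}_{1,1}$ at $z_0$, hence lies in $\mathcal{B}_{1,1}$ for $z$ near $z_0$ and has a real analytic Iwasawa splitting $\phi(z)\,\omega_m=\hat F(z)\,\hat B(z)$ with $\hat F\in\hh$, $\hat B\in\hh^\C_+$ and $\hat F(z_0)=\hat B(z_0)=I$.

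Next I would apply Lemma~\ref{switchlemma} to $\hat B(z)\,\omega_m^{-1}$, obtaining $\hat B(z)\,\omega_m^{-1}=X(z)\,\widehat B(z)$ with $\widehat B(z)\in\hh^\C_+$ and $X(z)$ one of $k_1,k_2,\omega_1^\theta$ (for $m=2$ their $\Ad_{\sigma_1}$--images). Thus $\phi(z)=\hat F(z)\,X(z)\,\widehat B(z)$, and since $\hat F(z)$ and the $k_i$ (resp.\ $\Ad_{\sigma_1}k_i$) lie in $\hh$, this is the $\hh$--$\hh^\C_+$ Iwasawa splitting of $\phi(z)$ whenever $X(z)\ne\omega_1^\theta$, so $F(z)=\hat F(z)\,X(z)$; while $X(z)=\omega_1^\theta$ exactly when $\phi(z)\in\mathcal{P}_m$. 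Writing $h(z):=(b_1-a_0)a_0$ for the quantity in \eqref{uveqn} (resp.\ \eqref{uveqn2}) built from $\hat B(z)$, so $h(z_0)=-1$, Lemma~\ref{switchlemma} shows $\phi(z)\in\mathcal{B}_{1,1}\cup\mathcal{P}_m$ for all $z$ near $z_0$ and $C_m=\{\,|h|=1\,\}$ locally. This yields item~\ref{summary1}: a whole neighborhood of each $z_0\in C_m$ lies in $\Sigma^\circ\cup C_m$, so that set is open, and $C_m$ is the zero set of $|h|^2-1$, non-constant since $\Sigma^\circ$ is dense by Theorem~\ref{summarythm1}. For item~\ref{summary2}, the normalization of $X$ in Lemma~\ref{switchlemma} forces $|u|/|v|=|h|\to1$ while $|u|^2-|v|^2=\pm1$, so $|u|,|v|\to\infty$; hence every entry of $X(z)$, and of $F(z)=\hat F(z)X(z)$ (as $\hat F\to I$), blows up, and a short computation of $N=F(i\sigma_3)F^{-1}$ gives $\langle N,N\rangle\equiv-1$ while its Euclidean norm tends to $\infty$.

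The distinction between $C_1$ and $C_2$ comes from the behavior of $\mathcal{S}$ on the factor $X$. A direct computation gives $\mathcal{S}(k_1)=\mathcal{S}(k_2)=i\sigma_3$ --- the two a priori unbounded contributions to $\mathcal{S}$ cancel, leaving only $\det X=|u|^2-|v|^2=\pm1$ --- whereas for $\Ad_{\sigma_1}k_1,\Ad_{\sigma_1}k_2$ the sign change $i\sigma_3\mapsto-i\sigma_3$ on the first term destroys the cancellation and $\mathcal{S}(\Ad_{\sigma_1}k_i)\sim(|u|^2+|v|^2)\sigma_3$ blows up. Using $\mathcal{S}(AB)=A\,\mathcal{S}(B)\,A^{-1}+2i\lambda\,\partial_\lambda A\cdot A^{-1}$ with $A=\hat F$, $B=X$: when $m=1$ one gets $\mathcal{S}(F)=\mathcal{S}(\hat F)$, real analytic near $z_0$ since $\hat F$ is, which gives the real analytic extension in item~\ref{summary3}; when $m=2$ one gets $\mathcal{S}(F)=\hat F\,\mathcal{S}(\Ad_{\sigma_1}X)\,\hat F^{-1}+(\text{bounded})$, which diverges, and differentiating shows the induced metric $2\langle f^{\lambda_0}_z,f^{\lambda_0}_{\bar z}\rangle$ also blows up, which is item~\ref{summary4}. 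For the non-immersion in item~\ref{summary3} I would use that $\hat F$ is the Iwasawa frame of a solution of $\phi^{-1}d\phi=\Ad_{\omega_1^{-1}}\xi$, a potential with a pole of order three in $\lambda^{-1}$: the usual $\tau$--invariance argument shows $\hat F^{-1}d\hat F=\hat U\,dz+\hat V\,d\bar z$ with $\hat U=\sum_{j=-3}^{0}\hat U_j\lambda^j$, and at $z_0$ (where $\hat B=I$) one has $\hat U_j(z_0)=(\Ad_{\omega_1^{-1}}\xi)_j(z_0)$ for $j=-3,-2,-1$. Feeding this into $f^{\lambda_0}_z=-\tfrac{1}{2H}\,\hat F\big([\hat U,i\sigma_3]+2i\lambda\,\partial_\lambda\hat U\big)\hat F^{-1}$, using $a_{-1}(z_0)\ne0$ and the conformality identity $\det\!\big([\hat U,i\sigma_3]+2i\lambda\partial_\lambda\hat U\big)\equiv0$, one computes $2\langle f^{\lambda_0}_z,f^{\lambda_0}_{\bar z}\rangle=0$ at $z_0$, so the extended map is not an immersion there.

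The step I expect to be the main obstacle is this last one: extracting the precise leading Laurent coefficients of $\hat U$ at $z_0$ from the third-order potential $\Ad_{\omega_1^{-1}}\xi$, and verifying that exactly the three contributing terms conspire so that the first fundamental form degenerates precisely on $C_1$ (the analogous bookkeeping showing the metric blows up on $C_2$ is similar but must be done carefully regarding which coefficients dominate). The reduction and the bare application of Lemma~\ref{switchlemma} are routine; the delicate point throughout is that the cancellations in $\mathcal{S}$, and the vanishing resp.\ blow-up of the metric, are not accidents but are forced by the $\tau$--equivariance built into the loop group data.
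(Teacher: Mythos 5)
Your treatment of items (1) and (2) and of the analytic extension in item (3) is essentially the paper's: the same reduction to $\phi(z_0)=\omega_m^{-1}$, the same use of Lemma~\ref{switchlemma} to get $\phi=\hat F X\widehat B$ with the trichotomy $k_1,k_2,\omega_1^\theta$ detecting the cell, the same $|u|/|v|\to 1$, $|u|^2-|v|^2=\pm1$ blow-up, and the same key fact $\mathcal{S}(k_i)=i\sigma_3$ (i.e.\ $k_i\in\mathcal{K}$). For the divergence in item (4) you compute $\mathcal{S}(\Ad_{\sigma_1}k_i)$ directly and observe the cancellation fails; the paper instead notes that $\Ad_{\sigma_1}\phi_n$ approaches $\mathcal{P}_1$, applies the extension theorem to the parallel frame to get a finite limit $L$ for $\mathcal{S}(\Ad_{\sigma_1}F_n)$, and writes $\mathcal{S}(F_n)=-\sigma_1\mathcal{S}(\Ad_{\sigma_1}F_n)\sigma_1+4i\lambda(\partial_\lambda F_n)F_n^{-1}$ with the second term forced to diverge. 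Both are valid; yours is more computational, the paper's reuses the $\mathcal{P}_1$ result.

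The genuine gap is the non-immersion claim of item (3) (and, to a lesser extent, the metric blow-up in item (4)). You propose to prove non-immersion by extracting the Laurent coefficients $\hat U_{-3},\hat U_{-2},\hat U_{-1}$ of $\hat F^{-1}\partial_z\hat F$ from the third-order potential $\Ad_{\omega_1^{-1}}\xi$ and computing $2\langle f_z,f_{\bar z}\rangle$ at $z_0$ --- a computation you explicitly do not carry out, and whose sketched mechanism is off: the identity $\det\bigl([\hat U,i\sigma_3]+2i\lambda\partial_\lambda\hat U\bigr)\equiv 0$ only encodes conformality $\langle f_z,f_z\rangle=0$ (which holds everywhere by continuity anyway), not the degeneration $\langle f_z,f_{\bar z}\rangle=0$ that you actually need. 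The paper avoids all of this: by Remark~\ref{metricremark} the induced metric on $\Sigma^\circ$ is $4\rho^4(\dd\tilde x^2+\dd\tilde y^2)$, where $\rho$ is the diagonal constant term of the normalized positive factor, and the explicit $\widehat B$ of Lemma~\ref{switchlemma} (formula \eqref{explicitfact}) gives $\rho^{-1}=-\varepsilon\hat b_{1}\bar v+u\hat d_{0}$ up to a constant, whence $\rho\to 0$ on approach to $\mathcal{P}_1$ and $\rho\to\infty$ on approach to $\mathcal{P}_2$ (Proposition~\ref{blowupprop}(2)). Non-immersion at $C_1$ then follows by contradiction (a conformal immersion into $\R^{2,1}$ cannot have null metric at a point), and the metric blow-up at $C_2$ --- which you assert with only ``differentiating shows'' --- follows immediately. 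So the missing ingredient in your argument is the observation that the normalization $B\in\uhat$ makes the conformal factor visible as $\rho^4$, and that the behavior of $\rho$ at the first two small cells is already contained in the explicit factorization you set up. With that piece added, your proof closes; without it, item (3)'s second half and item (4)'s second half remain unproven.
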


\begin{proof}
Item \ref{summary1}: 
For the open condition, it is enough to show that if $z_0 \in \Sigma^\circ \cup C_i$,
then there is a neighborhood of $z_0$ also contained in this set.  
Let $z_0 \in \Sigma^\circ \cup C_1$. Now $\Sigma^\circ$ is open, so take $z_0 \in C_1$.
It easy to see 
 that, in the following argument, no generality is
lost by assuming that $\phi(z_0) = \omega_1^{-1}$.
We can express $\phi$ as
\bdm
\phi = \hat{\phi} \omega_1^{-1},
\edm
where $\hat{\phi} := \phi \omega_1$. Since $\hat{\phi}(z_0) = I$, the identity,
$\hat{\phi}(z)$ is in the big cell in a neighborhood of $z_0$, and therefore can
locally be expressed as
\bdm
\hat{\phi} = F B, \hspace{1cm} F: \Sigma \to \hh, \hspace{.5cm} B: \Sigma \to \hh^\C_+.
\edm
So $\phi = F B \omega_1^{-1}$, and, denoting the components of $B$ as in  
Lemma \ref{switchlemma}, we have that $\phi(z)$ is in
$\mathcal{P}_1$ precisely when 
\bdm
g(z) := |b_1(z) - a_0(z)| |a_0(z)| -1 = 0,
\edm
and is in the big cell for other values of this function. Note that $g$ cannot
be constant, because, by Theorem \ref{summarythm1}, $z_0$ is
 a boundary point of $\Sigma^\circ$.
The case $z_0 \in \Sigma^\circ \cup C_2$ is analogous, and
the claim follows.

Items \ref{summary2}-\ref{summary4} are proved below as Corollaries 
 \ref{sumcor2},  \ref{sumcor3} and \ref{sumcor4} respectively.
\end{proof}
\begin{remark}
Noting that $\Ad_{\sigma_1} \omega_{2k-1} = \omega_{2k}$, 
and that the parallel surface is obtained by applying the Sym-Bobenko
formula to $\Ad_{\sigma_1} F$, the analogue of Theorem \ref{summarythm}
applies to the parallel surface, switching $\mathcal{P}_1$ and $\mathcal{P}_2$.
\end{remark}

\subsection{Behavior of the $\hh$ and $\hh^\C_+$ factors approaching the first two small cells}
We can use Lemma \ref{switchlemma} to show that the matrix $F$, in an 
$SU_{1,1}$ Iwasawa
factorization $\phi = F B$, blows up as $\phi$ approaches either of the first 
two small cells. Note that all such discussions take place for $\lambda \in \SSS^1$,
so that, for example, if $a$ is a function of $\lambda$, then $a^* = \bar{a}$.
\begin{proposition} \label{blowupprop}
Let $\phi_n$ be a sequence in $\mathcal{B}_{1,1}$, with 
 $\lim_{n \to \infty} \phi_n = \phi_0 \in \mathcal{P}_m$, for $m = 1$ or $2$.
Let $\phi_n = F_n B_n$ be an
$SU_{1,1}$ Iwasawa decomposition of $\phi_n$, 
with $F_n \in \hh$, 
$B_n \in \hh^\C_+$. Then:
\begin{enumerate}
\item
Writing $F_n$ as 
\bdm
F_n = \begin{pmatrix} x_n & y_n \\ \pm y^*_n & 
\pm x^*_n \end{pmatrix} \; , 
\edm
we have 
$\lim_{n \to \infty} |x_n| = \lim_{n \to \infty} |y_n| = \infty$, 
for all $\lambda \in \SSS^1$.\\
\item
Writing the constant term of $B_n$ as 
\bdm
B_n \big |_{\lambda =0} = \bbar \rho_n & 0 \\ 0 &  \rho_n^{-1} \ebar,
\edm
if $m=1$ then $\lim_{n \to \infty} |\rho_n| = 0$, and if 
$m=2$ then $\lim_{n \to \infty} |\rho_n| = \infty$.
\end{enumerate} 
\end{proposition}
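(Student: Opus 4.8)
The plan is to reduce to the model case $\phi_0=\omega_m^{-1}$ and then read both statements off the explicit factorization of Lemma~\ref{switchlemma}. Three preliminary observations make such a reduction legitimate. First, both conclusions are insensitive to the $\hh^0$-ambiguity in the splitting $\phi_n=F_nB_n$: since $\hh^0$ consists of diagonal matrices $\operatorname{diag}(\delta,\delta^{-1})$ with $|\delta|=1$, replacing $F_n$ by $F_n\operatorname{diag}(\delta_n,\delta_n^{-1})$ changes neither $|x_n|,|y_n|$ nor $|\rho_n|$. Second, $\Ad_{\sigma_1}$ is an automorphism of $\hh^\C$ preserving $\hh$ and $\hh^\C_+$ with $\Ad_{\sigma_1}\omega_1=\omega_2$ and $\Ad_{\sigma_1}\mathcal{P}_1=\mathcal{P}_2$; it carries $F_n$ to a loop of the same shape with $\{|x_n|,|y_n|\}$ unchanged and replaces $\rho_n$ by $\rho_n^{-1}$, so it suffices to treat $m=1$. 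Third, any two elements of $\mathcal{P}_1$ differ by left multiplication by an element of $\hh_\tau$ and right multiplication by one of $\hh^\C_+$, and $\omega_1^{-1}\in\mathcal{P}_1$ (take $B=I$ in Lemma~\ref{switchlemma}); so there are fixed $\hat F\in\hh_\tau$, $\hat B\in\hh^\C_+$ with $\hat F\phi_0\hat B=\omega_1^{-1}$, and passing from $\phi_n$ to $\hat F\phi_n\hat B=(\hat F F_n)(B_n\hat B)$ alters $F_n$ only by a fixed bounded invertible loop and $\rho_n$ only by a fixed nonzero scalar. The latter is harmless because $|\det F_n|\equiv1$ on $\SSS^1$, so $\bigl||x_n|^2-|y_n|^2\bigr|\equiv1$, and hence ``$|x_n|,|y_n|\to\infty$'' is the same as ``$\|F_n\|\to\infty$''. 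Thus I may assume $\phi_n\to\omega_1^{-1}$.

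Next I would set $\hat\phi_n:=\phi_n\omega_1$, so $\hat\phi_n\to I$; for large $n$ it lies in $\mathcal{B}_{1,1}$, and by the diffeomorphism statement of Theorem~\ref{section1-thm:SU11Iwa} its normalized splitting $\hat\phi_n=G_nC_n$ has $G_n\to I$ and $C_n\to I$, so in the notation of Lemma~\ref{switchlemma} the coefficients of $C_n$ satisfy $a_0^{(n)}\to1$, $b_1^{(n)}\to0$, $d_0^{(n)}\to1$. Applying Lemma~\ref{switchlemma} with $B:=C_n$ gives $C_n\omega_1^{-1}=X_n\widehat C_n$ with $\widehat C_n\in\hh^\C_+$. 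Since $G_n\to I$, eventually $G_n\in\hh_\tau$, so $\phi_n\in\mathcal{P}_1$ if and only if $C_n\omega_1^{-1}\in\mathcal{P}_1$; as $\phi_n\in\mathcal{B}_{1,1}$ this rules out the third ($\omega_1^\theta$) form for $X_n$ — the same criterion used in the proof of Theorem~\ref{summarythm}, item~\ref{summary1}. Hence $X_n=\begin{pmatrix}u_n & v_n\lambda\\ \varepsilon_n\bar v_n\lambda^{-1}&\varepsilon_n\bar u_n\end{pmatrix}$ with $\varepsilon_n=\pm1$ and $\varepsilon_n(|u_n|^2-|v_n|^2)=1$ (from $\det X_n=1$), while Lemma~\ref{switchlemma} gives $|u_n|/|v_n|=\mu_n:=|(b_1^{(n)}-a_0^{(n)})a_0^{(n)}|$ and $u_n/\bar v_n=\varepsilon_n(b_1^{(n)}-a_0^{(n)})a_0^{(n)}$. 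Now $\mu_n\to1$ but $\mu_n\neq1$ (again because $\phi_n\in\mathcal{B}_{1,1}$), so the determinant relation forces $|v_n|^2=|\mu_n^2-1|^{-1}\to\infty$ and $|u_n|=\mu_n|v_n|\to\infty$.

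For item~(1), the Iwasawa $\hh$-factor of $\phi_n=(G_nX_n)\widehat C_n$ agrees with $F_n$ up to a unit-modulus diagonal, so $|x_n|=|(G_nX_n)_{11}|$ on $\SSS^1$, and for each $\lambda\in\SSS^1$
\[
\bigl|(G_nX_n)_{11}(\lambda)\bigr|\ \ge\ |u_n|\Bigl(|G_{n,11}(\lambda)|-\mu_n^{-1}|G_{n,12}(\lambda)|\Bigr)\ \ge\ \tfrac12|u_n|\ \longrightarrow\ \infty
\]
for $n$ large, since $G_n\to I$ uniformly on $\SSS^1$ and $\mu_n\to1$ make the bracket tend to $1$; then $|y_n(\lambda)|\to\infty$ too, from $\bigl||x_n(\lambda)|^2-|y_n(\lambda)|^2\bigr|=1$. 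For item~(2), $\widehat C_n|_{\lambda=0}$ is diagonal and equals $\operatorname{diag}(\rho_n,\rho_n^{-1})$ up to a unit-modulus scalar, so it is enough to evaluate $(\widehat C_n)_{11}(0)$; inserting the explicit formula \eqref{explicitfact} (with $B:=C_n$) and then using $u_n/\bar v_n=\varepsilon_n(b_1^{(n)}-a_0^{(n)})a_0^{(n)}$ together with $a_0^{(n)}d_0^{(n)}=1$ (which is $\det C_n(0)=1$), the expression collapses to
\[
(\widehat C_n)_{11}(0)=\frac{v_n}{a_0^{(n)}}\bigl(1-\mu_n^2\bigr),\qquad\text{hence}\qquad |\rho_n|=\frac{\sqrt{|\mu_n^2-1|}}{|a_0^{(n)}|}\ \longrightarrow\ 0.
\]
Undoing the $\Ad_{\sigma_1}$ reduction gives $|\rho_n|\to\infty$ when $m=2$.

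The step I expect to be the main obstacle is that, at face value, both assertions are $\infty\cdot0$ indeterminacies: one must recognise that $\det X_n=1$ converts the convergence $\mu_n\to1$ into the divergence $|v_n|=|\mu_n^2-1|^{-1/2}\to\infty$, and that in \eqref{explicitfact} the apparent leading terms cancel — only once both the $u_n/\bar v_n$ relation and $\det C_n(0)=1$ are used — leaving $(\widehat C_n)_{11}(0)$ proportional to $1-\mu_n^2$, so that $|\rho_n|$ works out to exactly $\sqrt{|\mu_n^2-1|}/|a_0^{(n)}|$. Apart from spotting these cancellations, the only genuine computation is keeping the signs, conjugations and powers of $\lambda$ straight in \eqref{explicitfact}.
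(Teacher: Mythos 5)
Your proof is correct and follows essentially the same route as the paper: reduce to a neighborhood of $\omega_1$ via the decomposition $\phi_0=F_0\omega_1 B_0$, apply Lemma \ref{switchlemma} to extract the factor $X_n$, and combine $|u_n|/|v_n|\to 1$ with $|u_n|^2-|v_n|^2=\pm1$ to force blow-up. The only differences are cosmetic refinements — normalizing $\phi_0$ to $\omega_1^{-1}$ so the lemma applies without the $\lambda\mapsto-\lambda$ substitution, and the exact identities $|v_n|^2=|\mu_n^2-1|^{-1}$ and $|\rho_n|=\sqrt{|\mu_n^2-1|}/|a_0^{(n)}|$ in place of the paper's asymptotic estimates on $\|X_n\|$ and on the $(2,2)$-entry of $\widehat{B}_n$.
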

\vspace{.5cm}
\begin{proof}
\textbf{Item (1):} We give the proof for $m=1$. The case $m=2$ can be proved 
in the same way, or simply
obtained from the first case by applying $\Ad_{\sigma_1}$. According to
Theorem \ref{section1-thm:SU11Iwa}, we can write
\bdm
\phi_0 = F_0 \omega_1 B_0,
\edm
with $F_0 \in \hh_\tau$ and
  $B_0 \in \hh^\C_+$.  Expressing $\phi_n$ as 
\[
\phi_n = \hat{\phi}_n \omega_1 B_0 \; , \hspace{1cm}
\hat{\phi}_n := \phi_n B_0^{-1} \omega_1^{-1} \; , 
\]
we have 
$\lim_{n \to \infty}\hat{\phi}_n = F_0$, so $\hat{\phi}_n \in 
\mathcal{B}_{1,1}$ for sufficiently large $n$, because 
$\mathcal{B}_{1,1}$ is open.  Thus, for large $n$,
we have the factorization $\hat{\phi}_n = \hat{F}_n \hat{B}_n$, and the factors
can be chosen to satisfy
$\hat{F}_n \to F_0$ and $\hat{B}_n \to I$, as $n \to \infty$.
Using Lemma \ref{switchlemma}, with $\lambda$ replaced by 
$-\lambda$, we have the expression
\bdm
\phi_n = \hat{F}_n \hat{B}_n \omega_1 B_0 = \hat{F}_n X_n \tilde{B}_n B_0 \; 
, \hspace{.7cm} \tilde{B}_n \in \hh^\C_+ \; . 
\edm
Since by assumption 
$\phi_n \in \mathcal{B}_{1,1}$ for all $n$, the factor 
$\hat{B}_n \omega_1$ is also, and $X_n$ is always
a matrix of the form $k_1$ or $k_2$, that is 
\bdm
X_n = \begin{pmatrix} u_n & v_n \lambda \\ 
\pm \bar{v}_n \lambda^{-1} & \pm \bar{u}_n \end{pmatrix}, 
\edm
with $u_n$ and $v_n$ constant in $\lambda$.
We also have from Lemma \ref{switchlemma}, that $|u_n|/|v_n| =  
|\hat{b}_{1,n} - \hat{a}_{0,n}||\hat{a}_{0,n}|$,
where $\hat{b}_{1,n} \to 0$ and $\hat{a}_{0,n} \to 1$, as 
$n \to \infty$, 
because $\hat{B}_n \to I$.
Hence 
$\lim_{n \to \infty} \frac{|u_n|}{|v_n|} = 1$.
Combined with the condition $|u_n|^2 - |v_n|^2 = \pm 1$, this implies that 
$\lim_{n \to \infty} |u_n| = \lim_{n \to \infty} |v_n| = \infty$, and
\bdm
\lim_{n \to \infty} ||X_n|| = \infty,
\edm
 where $|| \cdot ||$ is some suitable
matrix norm.  
Now the uniqueness of the Iwasawa splitting $\phi_n = F_n B_n$ says that 
\bdm
F_n = \hat{F}_n X_n D_n,
\edm
where $D_n = \textup{diag}(e^{i \theta_n}, e^{-i \theta_n})$ 
for some $\theta_n \in \R$.  Then we have
\bdm
||X_n|| = ||\hat{F}^{-1}_n F_n || \leq ||\hat{F}^{-1}_n ||  \, || F_n||,
\edm
and so $\lim_{n \to \infty} ||\hat{F}^{-1}_n ||  \, || F_n|| = \infty$ also.
But $||\hat{F}^{-1}_n|| \to ||F_0||$, which is finite, 
and so we have $|| F_n || \to \infty$.
Because the components of $F_n$ satisfy $|x_n|^2 - |y_n|^2 = \pm 1$,
the result follows. \\
\noindent \textbf{Item (2):}
For the case $m=1$, proceeding as above, we have
$\phi_n =  \hat{F}_n X_n \tilde{B}_n B_0$, where $X_n \tilde{B}_n = \hat{B}_n \omega_1$,
and $\hat{B}_n \to I$. Up to some constant factor coming from $B_0$, 
the quantity $\rho_n^{-1}$ is given by the constant term  of the
matrix component $[\tilde{B}_n]_{22}$, for which
we have an explicit expression in (\ref{explicitfact}), that is:
\bdm
\rho_n^{-1} =  -\varepsilon \hat b _{n,1} \, \bar{v}_n + u_n \, \hat{d}_{n,0},
\hspace{.5cm} \textup{where } \hspace{.2cm} \hat{B}_n = \bbar  \sum_{i=0}^\infty \hat a_{n,i} \lambda ^i &
 \sum_{i=1}^\infty \hat b_{n,i} \lambda ^i \\
  \sum_{i=1}^\infty \hat c_{n,i} \lambda ^i &
   \sum_{i=0}^\infty \hat d_{n,i} \lambda ^i \ebar.
\edm
Now the facts that $\hat{B}_n \to I$ and $u_n \bar u_n - v_n \bar v_n = \varepsilon$,
so that
\beqas
b _{n,1} \to 0, \hspace{1cm} \hat{d}_{n,0} \to 1,\\
\frac{|u_n|}{|v_n|} = |\hat{b}_{n,1} - \hat{a}_{n,0}||\hat{a}_{n,0}| \to 1,
\hspace{1cm}  |u_n| \to \infty,
\eeqas
imply that $|\rho_n^{-1}| \to \infty$,
 which is what we needed to show.  The case $m=2$ is obtained by applying 
 $\Ad_{\sigma_1}$, which switches $\rho$ and $\rho^{-1}$.
\end{proof}

\begin{corollary} \label{sumcor2}
Proof of item \ref{summary2} of Theorem \ref{summarythm}.
\end{corollary}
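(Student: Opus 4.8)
The statement has two parts: (a) the entries of the frame $F$ (evaluated at any $\lambda_0 \in \SSS^1$) blow up as $z \to z_0 \in C_1 \cup C_2$, and (b) the unit normal $N = F e_3 F^{-1}$ becomes asymptotically lightlike in the Euclidean sense. Part (a) is essentially a restatement of Proposition \ref{blowupprop}, so the plan is to first reduce to it. Given a sequence $z_n \to z_0$ in $\Sigma^\circ$ with $z_0 \in C_m$ ($m = 1$ or $2$), set $\phi_n := \phi(z_n) \to \phi(z_0) =: \phi_0 \in \mathcal{P}_m$; by Theorem \ref{dpwthm} the frame obtained from the splitting is $F_n = F(z_n)$, and possibly after right-multiplying by a constant diagonal matrix in $\hh^0$ (which, by Theorem \ref{symthm}\,\eqref{symthm3}, does not affect the surface) we may assume $F_n = F(z_n)$ is the normalized $SU_{1,1}$-Iwasawa factor. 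Writing $F_n = \left(\begin{smallmatrix} x_n & y_n \\ \pm y_n^* & \pm x_n^*\end{smallmatrix}\right)$ as in the proposition, item (1) of Proposition \ref{blowupprop} gives $|x_n(\lambda_0)| \to \infty$ and $|y_n(\lambda_0)| \to \infty$ for every $\lambda_0 \in \SSS^1$. Since $z_n \to z_0$ was an arbitrary sequence, this yields part (a) as a limit statement.

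For part (b), I would compute $N = F e_3 F^{-1}$ explicitly in terms of the entries of $F$, recalling $e_3 = i\sigma_3$ and using the relations $x_n \bar{x}_n - y_n \bar{y}_n = \pm 1$ (for $\lambda_0 \in \SSS^1$, where $x^* = \bar{x}$). A direct matrix computation gives
\[
N_n = F_n (i\sigma_3) F_n^{-1} = \pm i\begin{pmatrix} |x_n|^2 + |y_n|^2 & -2 x_n \bar{y}_n \\ 2 \bar{x}_n y_n & -(|x_n|^2+|y_n|^2)\end{pmatrix},
\]
(up to a sign depending on which component of $\hh$ one is in), whose entries all have modulus growing like $|x_n|^2 + |y_n|^2 \to \infty$. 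Identifying $N_n \in \mathfrak{g}$ with a vector in $\R^{2,1}$ via the basis $\{e_1, e_2, e_3\}$, the Lorentzian norm is of course preserved at $\mp 1$, but the \emph{Euclidean} $\R^3$-norm of the coordinate vector is comparable to the maximum modulus of the matrix entries, hence diverges. That is exactly the claim that $N$ becomes asymptotically lightlike: a timelike-or-spacelike unit vector whose Euclidean length blows up must have its coordinate vector approaching the light cone direction. I would phrase this last step carefully — the relevant fact is that on the set $\{\langle v, v\rangle = c\}$ for fixed $c$, the Euclidean norm is a proper function whose level sets at large values lie in a thin neighborhood of the light cone.

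The main obstacle here is not a hard estimate but bookkeeping: one must make sure the frame $F$ appearing in Theorem \ref{dpwthm} really is (up to the harmless $\hh^0$ ambiguity) the normalized Iwasawa factor to which Proposition \ref{blowupprop} applies, and one must handle the two cases $F \in \hh_\tau$ versus $F \in \Psi(i\sigma_1)\hh_\tau$ uniformly — this is why the $\pm$ signs appear in the matrix form of $F_n$, and the computation of $N_n$ must be checked in both cases (though, as noted in the paper, the $\Psi(i\sigma_1)$ case is obtained from the other by applying $\Ad_{\sigma_1}$, which is an isometry of $\R^{2,1}$). Everything else is elementary linear algebra combined with a direct appeal to Proposition \ref{blowupprop}.
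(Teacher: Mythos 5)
Your proposal is correct and follows essentially the same route as the paper: deduce the blow-up of the entries of $F$ from Proposition \ref{blowupprop}, compute $N = F\,i\sigma_3\,F^{-1}$ explicitly, and conclude from the divergence of the $e_3$-component (equivalently the Euclidean norm) together with the fixed Lorentzian length that $N$ becomes asymptotically lightlike. The only blemish is a harmless slip in the off-diagonal entries of your formula for $N$ (they should be $-2x_ny_n$ and $2\bar{x}_n\bar{y}_n$ rather than $-2x_n\bar{y}_n$ and $2\bar{x}_ny_n$), which does not affect the argument since only the moduli are used.
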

\begin{proof} We just saw that all components of $F$ blow up as $\phi$ 
approaches $\mathcal{P}_1$ or $\mathcal{P}_2$.  
Taking $F = {\small{\bbar a & b \\ \pm  b^* & \pm  a^* \ebar}}$, 
 Proposition \ref{blowupprop} says $|a| \to \infty$ and $|b| \to \infty$. 
The unit normal vector is
given by
\bdm
F i\sigma_3 F^{-1} = i \cdot \bbar \pm(aa^* + bb^*) & -2ab\\ 2a^*b^* & \mp(bb^* +aa^*) \ebar.
\edm
The $e_3$ component, $\pm(aa^* + bb^*)$, approaches $\infty$. Since $N$ is a unit vector,
the only way this can happen is for the vector to become asymptotically lightlike.
\end{proof}

\subsection{Extending the Sym-Bobenko formula to the first small cell}
To show that the surface extends analytically to $C_1 = \phi^{-1}(\mathcal{P}_1)$,
we think of the Sym-Bobenko formula as a map from $\hh^\C$, instead of $\hh$,
by composing it with the projection onto $\hh$. This is necessary because we
showed that the $\hh$ factor blows up as we approach $\mathcal{P}_1$.

Recall the function $\sym$ in \eqref{symformula} used for 
the Sym-Bobenko formula.  
Note that if $F \in \hh$ then either $F$ or $iF$ is an element of 
$\Lambda \hat{G}_\sigma \subset \hh^\C$, where 
$\hat{G} = U_{1,1}$.  The Lie algebra of $\hat{G}$ is just 
$\mathfrak{g} = \mathfrak{su}_{1,1}$ and we can conclude 
that $F i\sigma_3 F^{-1}$ and $i \lambda 
\partial_\lambda F \cdot F^{-1}$ are loops 
in $Lie(\hh)$.  Thus $\sym$ is a real analytic map
from $\hh$ to $Lie(\hh)$.
Define 
\bdm
\mathcal{K} := \{ k \in \hh ~|~ \mathcal{S}(k) = i \sigma_3 \}.
\edm
\begin{lemma}  \label{symlemma}
$\mathcal{K}$ is a subgroup of $\hh$. Moreover, $\mathcal{K}$
consists precisely of the elements $k \in \hh$ such that  
\beq \label{keqn}
\mathcal{S}(Fk) = \mathcal{S}(F),
\eeq
for any $F \in  \hh$. 
\end{lemma}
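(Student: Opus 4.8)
The plan is to establish a single cocycle identity for $\sym$ and then read off both assertions formally. First I would record the elementary identity: for loops $F,G$ with values in $SL_2\C$, the product rule $\partial_\lambda(FG) = (\partial_\lambda F)G + F(\partial_\lambda G)$ together with $(FG)^{-1} = G^{-1}F^{-1}$ gives
\[
\sym(FG) = F\,\sym(G)\,F^{-1} + 2i\lambda\,(\partial_\lambda F)F^{-1}.
\]
Subtracting $i\sigma_3$ and substituting $2i\lambda(\partial_\lambda F)F^{-1} = \sym(F) - F i\sigma_3 F^{-1}$, this rearranges to
\[
\sym(FG) - i\sigma_3 = \Ad_F\!\big(\sym(G) - i\sigma_3\big) + \big(\sym(F) - i\sigma_3\big).
\]
Thus, setting $\Theta(F) := \sym(F) - i\sigma_3$, the map $\Theta$ satisfies the $1$-cocycle law $\Theta(FG) = \Ad_F\Theta(G) + \Theta(F)$ for the $\Ad$-action of $\hh$ on $Lie(\hh)$. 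Here $\Theta$ is a well-defined map $\hh \to Lie(\hh)$ since, as noted just before the lemma, $\sym$ maps $\hh$ into $Lie(\hh)$ and $\Ad_F$ preserves $Lie(\hh)$ for $F \in \hh$; but the displayed identity itself is purely algebraic and holds loop-wise.

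Next I would observe that $\mathcal{K} = \{k \in \hh \mid \Theta(k) = 0\}$ is exactly the kernel of $\Theta$, and invoke the three standard consequences of the cocycle law. Evaluating at $F = G = I$ yields $\Theta(I) = 2\Theta(I)$, so $\Theta(I) = 0$ and $I \in \mathcal{K}$. If $\Theta(k_1) = \Theta(k_2) = 0$ then $\Theta(k_1 k_2) = \Ad_{k_1}\Theta(k_2) + \Theta(k_1) = 0$, so $k_1 k_2 \in \mathcal{K}$. If $\Theta(k) = 0$ then $0 = \Theta(I) = \Theta(k k^{-1}) = \Ad_k\Theta(k^{-1})$, forcing $\Theta(k^{-1}) = 0$, so $k^{-1} \in \mathcal{K}$. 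Hence $\mathcal{K}$ is a subgroup of $\hh$.

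Finally, for the characterization: if $k \in \mathcal{K}$, then for any $F \in \hh$ the first displayed formula with $\sym(k) = i\sigma_3$ gives $\sym(Fk) = \Ad_F(i\sigma_3) + 2i\lambda(\partial_\lambda F)F^{-1} = \sym(F)$, which is exactly \eqref{keqn}. Conversely, if $\sym(Fk) = \sym(F)$ holds for every $F \in \hh$, then taking $F = I$ (noting $I \in \hh$ and $\sym(I) = i\sigma_3$, since $I$ is a constant loop) gives $\sym(k) = \sym(Ik) = \sym(I) = i\sigma_3$, i.e. $k \in \mathcal{K}$.

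The only step needing any care is the first one — keeping track of the $\lambda$-derivative term so that $2i\lambda(\partial_\lambda F)F^{-1}$ pulls cleanly out of $\sym(FG)$ — and this is a one-line product-rule calculation; everything afterwards is formal cocycle algebra, so I do not anticipate a genuine obstacle.
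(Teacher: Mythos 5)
Your proof is correct and follows essentially the same route as the paper: both rest on the product formula $\mathcal{S}(xy)=x\,\mathcal{S}(y)\,x^{-1}+2i\lambda\,\partial_\lambda x\cdot x^{-1}$, with the converse obtained by taking $F=I$. Your repackaging of this as the cocycle law $\Theta(FG)=\Ad_F\Theta(G)+\Theta(F)$ merely spells out the subgroup verification the paper calls straightforward.
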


\begin{proof}
Both statements follow from the easily verified formula
\bdm
\mathcal{S}(xy) = x \mathcal{S}(y) x^{-1} + 2 i\lambda 
\partial_\lambda x \cdot x^{-1}. 
\edm
Hence it is straightforward to show that $\mathcal{K}$  
is a group, and any element $k \in \mathcal{K}$
satisfies (\ref{keqn}) for any $F$. Conversely,
if $k$ is an element such that
(\ref{keqn}) holds for all $F$, in particular for $F = I$, then 
$\mathcal{S}(k) = \mathcal{S}(I) = i \sigma_3$, so $k \in \mathcal{K}$.
\end{proof}
Now $\hh^0$ consists of constant diagonal matrices, which are in $\mathcal{K}$, so an immediate
corollary of this lemma (see also Theorem \ref{symthm}) is
\begin{lemma} \label{extendtoquotient}
The function $\sym$ is a well-defined real analytic map
$\hh/\hh^0 \to Lie(\hh)$.
\end{lemma}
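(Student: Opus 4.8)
The plan is to obtain Lemma \ref{extendtoquotient} as an essentially immediate consequence of Lemma \ref{symlemma}, using the inclusion $\hh^0 \subset \mathcal{K}$. To see this inclusion, I would observe that an element $D \in \hh^0$ is a $\lambda$-independent diagonal matrix, so $\partial_\lambda D = 0$ and $D$ commutes with $\sigma_3$; hence the identity $\sym(xy) = x\,\sym(y)\,x^{-1} + 2i\lambda\,\partial_\lambda x \cdot x^{-1}$ from the proof of Lemma \ref{symlemma} (taken with $y = I$) gives $\sym(D) = D\, i\sigma_3\, D^{-1} = i\sigma_3$, which is exactly the defining condition for membership in $\mathcal{K}$.

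Granting this, well-definedness on the quotient is immediate: if $F_1, F_2 \in \hh$ represent the same coset of $\hh^0$, then $F_2 = F_1 k$ for some $k \in \hh^0 \subset \mathcal{K}$, and equation \eqref{keqn} of Lemma \ref{symlemma} yields $\sym(F_2) = \sym(F_1 k) = \sym(F_1)$. Therefore $\sym$ factors through a set-theoretic map $\overline{\sym}: \hh/\hh^0 \to Lie(\hh)$.

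For real analyticity I would use that $\hh^0$ is a finite-dimensional closed Lie subgroup of the Banach Lie group $\hh$, so that $\hh/\hh^0$ is a real analytic Banach manifold and the projection $\pi: \hh \to \hh/\hh^0$ is a real analytic submersion admitting local real analytic sections. Given such a section $s: U \to \hh$ over an open set $U \subset \hh/\hh^0$, on $U$ one has $\overline{\sym} = (\overline{\sym}\circ\pi)\circ s = \sym \circ s$, a composition of real analytic maps, since $\sym: \hh \to Lie(\hh)$ is already known to be real analytic; covering $\hh/\hh^0$ by such open sets gives the claim. The only point requiring care, and the only mild obstacle, is the Banach-manifold structure on $\hh/\hh^0$ together with the existence of local analytic sections of $\pi$; this is standard because $\hh^0$ is closed and finite-dimensional (one may apply the inverse function theorem in Banach spaces, or view $\pi$ as a principal $\hh^0$-bundle). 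All the substantive content is already contained in the one-line identity $\sym(Fk) = \sym(F)$ for $k \in \hh^0$.
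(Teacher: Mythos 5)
Your proposal is correct and follows essentially the same route as the paper, which likewise deduces the lemma immediately from Lemma \ref{symlemma} after observing that $\hh^0$ consists of constant diagonal matrices lying in $\mathcal{K}$ (so that $\sym(Fk)=\sym(F)$). Your additional care about the real analytic Banach-manifold structure of $\hh/\hh^0$ and local sections of the projection is a sound elaboration of a point the paper leaves implicit.
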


On the big cell, $\mathcal{B}_{1,1}$, we can define an extended Sym-formula 
$\stilde : \mathcal{B}_{1,1} \to Lie(\hh)$, 
by the composition
\beq \label{sym3}
\widetilde{\mathcal{S}}(\phi) :=  \mathcal{S} (\pi (\phi)),
\eeq
where $\pi$ is the projection to  $\hh/\hh^0$ given by the 
$SU_{1,1}$ Iwasawa splitting, described in Corollary \ref{cor1}.
  It is a real analytic function
on $\mathcal{B}_{1,1}$, since it is a composition of two such functions. 
In spite of the conclusion of 
Proposition \ref{blowupprop}, we now show that this 
function extends 
to the first small cell $\mathcal{P}_1$. The critical point 
in the following argument is
the easily verified fact that the matrices $k_i$ given in 
Lemma \ref{switchlemma}
are  elements of  $\mathcal{K}$. The argument does not 
apply to the second small
cell, because the corresponding matrices $\Ad_{\sigma_1} k_i$ are \emph{not}
elements of $\mathcal{K}$.

\begin{theorem} \label{extensionthm}
The function $\stilde$ extends to a real analytic function 
$\mathcal{B}_{1,1} \sqcup \mathcal{P}_1 \to Lie(\hh)$.
\end{theorem}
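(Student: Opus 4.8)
The plan is to extend $\stilde$ locally near each point of $\mathcal{P}_1$ by a formula that does not see the blow-up of the $\hh$-factor established in Proposition \ref{blowupprop}, and then to glue these local extensions using that $\mathcal{B}_{1,1}$ is dense. First I would fix $\phi_0 \in \mathcal{P}_1$ and, using the defining factorization (\ref{pndecomp}), write $\phi_0 = F_0 \omega_1 B_0$ with $F_0 \in \hh_\tau$ and $B_0 \in \hh^\C_+$. For $\phi$ near $\phi_0$ set $\hat{\phi} := \phi B_0^{-1} \omega_1^{-1}$; this depends holomorphically on $\phi$ and satisfies $\hat{\phi}(\phi_0) = F_0 \in \hh \subset \mathcal{B}_{1,1}$, so by openness of $\mathcal{B}_{1,1}$ (Theorem \ref{section1-thm:SU11Iwa}(\ref{mainthm-(3)})) we have $\hat{\phi} \in \mathcal{B}_{1,1}$ for all $\phi$ in some neighborhood $U$ of $\phi_0$. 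On $U$ split $\hat{\phi} = \hat{F}\hat{B}$ with $\hat{F} \in \hh$, $\hat{B} \in \uhat$; by the diffeomorphism statement of Theorem \ref{section1-thm:SU11Iwa}(\ref{mainthm-(2)}), together with the holomorphic dependence of $\hat{\phi}$ on $\phi$, the factors $\hat{F}$ and $\hat{B}$ depend real analytically on $\phi$. Since $\mathcal{S}: \hh \to Lie(\hh)$ is real analytic, the assignment $\stilde_0(\phi) := \mathcal{S}(\hat{F})$ then defines a real analytic map on all of $U$, in particular along $U \cap \mathcal{P}_1$.

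The heart of the matter is to check that $\stilde_0$ agrees with $\stilde$ on $U \cap \mathcal{B}_{1,1}$. For this I would apply Lemma \ref{switchlemma}, in the $\lambda \mapsto -\lambda$ form used in the proof of Proposition \ref{blowupprop}, to the factor $\hat{B}\omega_1$, obtaining $\hat{B}\omega_1 = X\tilde{B}$ with $\tilde{B} \in \hh^\C_+$ and $X$ one of the three types $k_1$, $k_2$, $\omega_1^\theta$ of Lemma \ref{switchlemma} (these forms being stable under $\lambda \mapsto -\lambda$), the third occurring precisely when $\phi \in \mathcal{P}_1$, since $\mathcal{P}_1$ is invariant under left multiplication by $\hh$ and right multiplication by $\hh^\C_+$ (a consequence of the disjoint union (\ref{globaldecomp}) and the uniqueness of the Birkhoff middle term). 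Hence for $\phi \in U \cap \mathcal{B}_{1,1}$ one has $X \in \{k_1, k_2\} \subset \hh$ and
\[
\phi \;=\; \hat{F}\,\hat{B}\,\omega_1\, B_0 \;=\; (\hat{F} X)(\tilde{B} B_0), \qquad \hat{F} X \in \hh, \quad \tilde{B} B_0 \in \hh^\C_+,
\]
which is an $SU_{1,1}$-Iwasawa splitting of $\phi$; so $\pi(\phi) = [\hat{F}X]$ in $\hh/\hh^0$ and hence, by (\ref{sym3}) and Lemma \ref{extendtoquotient}, $\stilde(\phi) = \mathcal{S}(\hat{F}X)$. A short direct computation now gives $\mathcal{S}(k_1) = \mathcal{S}(k_2) = i\sigma_3$, so that $k_1, k_2 \in \mathcal{K}$, and Lemma \ref{symlemma} yields $\mathcal{S}(\hat{F}X) = \mathcal{S}(\hat{F}) = \stilde_0(\phi)$. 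This proves $\stilde_0 = \stilde$ on $U \cap \mathcal{B}_{1,1}$.

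It then remains to glue. The factorization just exhibited, applied to every element of $U$, shows that $U \subset \mathcal{B}_{1,1} \sqcup \mathcal{P}_1$, so $\mathcal{B}_{1,1} \sqcup \mathcal{P}_1$ is an open subset of $\hh^\C$ on which real analyticity makes sense; and $\mathcal{B}_{1,1}$, being dense in $\hh^\C$, is dense in it. Any two of the maps $\stilde_0$ agree with $\stilde$ on the dense subset of their common domain lying in $\mathcal{B}_{1,1}$, hence, being continuous, agree throughout that domain, and likewise match $\stilde$ on $\mathcal{B}_{1,1}$ itself; gluing produces a single real analytic map $\mathcal{B}_{1,1} \sqcup \mathcal{P}_1 \to Lie(\hh)$ restricting to $\stilde$ on $\mathcal{B}_{1,1}$, which is the assertion.

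I expect the only genuinely fiddly part to be the second paragraph: putting Lemma \ref{switchlemma} into the form $\hat{B}\omega_1 = X\tilde{B}$ via the $\lambda \mapsto -\lambda$ substitution, tracking that $\hat{F}X$ lands back in $\hh$, and verifying $k_1, k_2 \in \mathcal{K}$. By contrast the conceptual point — that the divergence of the $\hh$-factor as $\mathcal{P}_1$ is approached takes place entirely inside the subgroup $\mathcal{K}$, on which $\mathcal{S}$ is constant — makes the extension essentially automatic once the factorization is set up; this is precisely why the same argument does not extend $\stilde$ across $\mathcal{P}_2$, where the analogous matrices $\Ad_{\sigma_1}k_i$ fail to lie in $\mathcal{K}$.
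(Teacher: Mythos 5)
Your proposal is correct and follows essentially the same route as the paper: the same local extension $\hat{\phi} = \phi B_0^{-1}\omega_1^{-1}$ into the big cell, the same appeal to Lemma \ref{switchlemma} together with the fact $k_1, k_2 \in \mathcal{K}$ to identify the extension with $\stilde$ on $U \cap \mathcal{B}_{1,1}$, and the same density-plus-continuity gluing. The only (immaterial) difference is that you apply the switch lemma to $\hat{B}\omega_1$ after splitting $\hat{\phi}$, whereas the paper applies it directly to $BB_0^{-1}\omega_1^{-1}$ coming from the splitting of $\phi$.
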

\begin{proof}
Let $\phi_0$ be an element of $\mathcal{B}_{1,1} \sqcup \mathcal{P}_1$.
If $\phi_0 \in \mathcal{B}_{1,1}$ define $\stilde (\phi_0)$ 
by (\ref{sym3}), and
this is well defined and analytic in a neighborhood of $\phi_0$. 
If $\phi_0 \in \mathcal{P}_1$,
we have a factorization 
\beq \label{ssdecomp}
\phi_0 = F_0 \omega_1 B_0,
\eeq
  given by (\ref{globaldecomp}).
Then $\phi_0 B_0^{-1} \omega_1^{-1}$ is in $\mathcal{B}_{1,1}$, which is an
open set. Hence we can define, for $\phi$ in some neighborhood $\mathcal{W}_0$
of $\phi_0$, a new element 
\bdm
\hat{\phi} := \phi B_0^{-1} \omega_1^{-1},
\edm
and $\hat{\phi}$ is in $\mathcal{B}_{1,1}$ for all $\phi \in \mathcal{W}_0$.
Now we define, for $\phi \in \mathcal{W}_0$,
\beq \label{sym4}
\hat{\mathcal{S}} (\phi) := \stilde(\hat{\phi}).
\eeq
We need to check that this 
is well defined (because $B_0$ is not unique in (\ref{ssdecomp}))
and also that \eqref{sym3} and \eqref{sym4} coincide on 
$\mathcal{W}_0 \cap \mathcal{B}_{1,1}$.  
To prove both of these points it is enough to show just the second one, 
because $\mathcal{W}_0 \cap \mathcal{B}_{1,1}$,
is dense in $\mathcal{W}_0$ and because \eqref{sym4} is 
defined and continuous on the whole of $\mathcal{W}_0$. Now on 
$\mathcal{W}_0 \cap \mathcal{B}_{1,1}$, we have the Iwasawa factorization
$\phi = F B$, so 
\bdm
\hat{\phi} = F B B_0^{-1} \omega_1^{-1}.
\edm
Since we know this is in the big cell, we can express this, 
by Lemma \ref{switchlemma},
 as
\bdm
\hat{\phi} = F k B^\prime,
\edm
where $k$ is of the form 
$\begin{pmatrix} u & v \lambda \\ \pm \bar{v} \lambda^{-1} & \pm \bar{u} 
\end{pmatrix}$, and $B^\prime$ is in $\hh^C_+$.
Now $Fk \in \hh$, so, by definition,
$\stilde(\hat{\phi}) = \mathcal{S}(Fk)$. But  $k \in \mathcal{K}$, 
so, in fact, $\hat{\mathcal{S}}(\phi):=
\stilde(\hat{\phi}) = \mathcal{S}(F) = \stilde(\phi)$.
\end{proof}

\pagebreak
\begin{corollary} \label{sumcor3}
Proof of item \ref{summary3} of Theorem \ref{summarythm}.
\end{corollary}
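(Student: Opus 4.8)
The plan is to read off both assertions directly from Theorem~\ref{extensionthm}, Proposition~\ref{blowupprop}(2) and Remark~\ref{metricremark}, with only bookkeeping in between. For the extension statement, I would first rewrite the Sym--Bobenko formula on $\Sigma^\circ$ as $f^{\lambda_0}=-\tfrac{1}{2H}\,\mathcal{S}(F)\big|_{\lambda=\lambda_0}=-\tfrac{1}{2H}\,\stilde(\phi)\big|_{\lambda=\lambda_0}$, the second equality holding because $\mathcal{S}$ descends to $\hh/\hh^0$ (Lemma~\ref{extendtoquotient}) and $\stilde=\mathcal{S}\circ\pi$ by definition, so that on $\Sigma^\circ$ one has $\stilde(\phi)=\mathcal{S}(F)$ for the $\hh$-factor $F$ of $\phi=FB$. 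Since $\phi$ is holomorphic, hence real analytic, and $\stilde$ extends to a real analytic map $\mathcal{B}_{1,1}\sqcup\mathcal{P}_1\to Lie(\hh)$ by Theorem~\ref{extensionthm}, the composite $\stilde\circ\phi$ is real analytic on $\phi^{-1}(\mathcal{B}_{1,1}\sqcup\mathcal{P}_1)=\Sigma^\circ\cup C_1$, which is open by item~\ref{summary1} of Theorem~\ref{summarythm}. Evaluation of an $H^s$-loop at the fixed point $\lambda_0\in\SSS^1$ is bounded linear for $s>1/2$, hence real analytic, so $f^{\lambda_0}$ itself extends to a real analytic map $\Sigma^\circ\cup C_1\to\R^{2,1}$ that agrees with the original immersion on $\Sigma^\circ$. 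This gives the first assertion.

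For the non-immersion statement I would argue that the first fundamental form of the extended map vanishes identically at any $z_0\in C_1$. By Remark~\ref{metricremark}, in the conformal coordinate $\tilde z$ obtained by integrating $\tfrac{i}{H}a_{-1}\,dz$, the induced metric of $f^1$ on $\Sigma^\circ$ is $4\rho^4(d\tilde x^2+d\tilde y^2)$, where $\rho\colon\Sigma^\circ\to\R^+$ is read off from the constant term $\mathrm{diag}(\rho,\rho^{-1})$ of the normalized factor $B\in\uhat$; the same formula, with the same $\rho$, holds for each $f^{\lambda_0}$, $\lambda_0\in\SSS^1$, since $\rho$ is $\lambda$-independent and members of the associate family are mutually isometric. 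Pulled back to a fixed coordinate $z=x+iy$ near $z_0$, the metric is $\tfrac{4}{H^2}\rho^4|a_{-1}|^2(dx^2+dy^2)$, still conformal to the Euclidean one, with $|a_{-1}|^2$ bounded and nonvanishing near $z_0$ (recall $a_{-1}\neq 0$ on $\Sigma$). As $z\to z_0$ we have $\phi(z)\to\phi(z_0)\in\mathcal{P}_1$, so Proposition~\ref{blowupprop}(2) in the case $m=1$ gives $\rho(z)\to 0$. Since the extended map is real analytic, hence $C^1$, across $z_0$, its first fundamental form is continuous there and equals the limit of the first fundamental forms on $\Sigma^\circ$, which is $0$; therefore $df^{\lambda_0}(z_0)=0$ and $f^{\lambda_0}$ is not immersed at $z_0$.

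I do not expect a genuine obstacle: essentially all the content has been front-loaded into Theorem~\ref{extensionthm} (the point that the matrices $k_i$ of Lemma~\ref{switchlemma} lie in $\mathcal{K}$, so $\stilde$ is insensitive to the ambiguity in factoring out $\omega_1$) and into Proposition~\ref{blowupprop}. The only places needing a word of care are that point evaluation at $\lambda_0$ preserves real analyticity, which is immediate from the $H^s$, $s>1/2$, Sobolev embedding, and the implication ``induced metric $\sim\rho^4\to 0$'' $\Rightarrow$ ``$df^{\lambda_0}(z_0)=0$'', which uses only that the already-established real analytic extension is $C^1$ at $z_0$ so that the first fundamental form extends continuously. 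Both of these are routine, so the corollary follows with the bookkeeping above.
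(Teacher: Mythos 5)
Your proposal follows essentially the same route as the paper: the extension statement is read off from Theorem~\ref{extensionthm}, and the non-immersion statement from the formula $4\rho^4(\dd x^2+\dd y^2)$ of Remark~\ref{metricremark} together with $\rho\to 0$ from Proposition~\ref{blowupprop}(2) (the paper phrases this last step as a contradiction rather than directly). One inference in your second paragraph is too strong, however: in the indefinite metric of $\R^{2,1}$, the vanishing of the first fundamental form at $z_0$ does \emph{not} imply $\dd f^{\lambda_0}(z_0)=0$, since a nonzero lightlike vector has zero length; what it does imply is that the image of $\dd f^{\lambda_0}(z_0)$ is a totally isotropic subspace, which in signature $(2,1)$ has dimension at most $1$, so the rank is at most $1$ and $f^{\lambda_0}$ is still not immersed at $z_0$ --- this is exactly the point the paper covers with the remark that a conformally immersed surface in $\R^{2,1}$ cannot be null at a point.
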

\begin{proof}
We just showed that the surface obtained by the Sym-Bobenko formula extends
to a real analytic map from $\Sigma^\circ \cup C_1$.  
To prove that the surface is not immersed at $z_0 \in C_1$, suppose the
contrary: that is, there is an open set $W$ containing $z_0$ such 
that ${f}^{\lambda_0}: W \to \R^{2,1}$ is an immersion.
Let $\dd \hat s ^2$ denote the induced 
metric.  From Remark \ref{metricremark}, this metric is given on 
the open dense set $\Sigma^\circ$ by 
the expression $4 \rho^4 (\dd x^2 + \dd y^2)$.
The 1-form $\dd x^2 + \dd y^2$ is well defined on $\Sigma$, but, by
item (2) of Proposition \ref{blowupprop}, the 
function $\rho^4$ approaches $0$ as $z$ approaches $z_0$.
 Therefore the induced metric is zero at this point. This is a contradiction, because a conformally
 immersed surface in $\R^{2,1}$ cannot be null at a point.
\end{proof}

\subsection{The behavior of $\stilde$ when approaching other small cells}
 The function $\stilde$ does not extend continuously to any of the other
small cells. To see this, consider the functions $\psi^m_z$ 
and $F_z^m$ given in Remark \ref{psidefremark}.
  On the big cell, we have 
\beqas
\stilde(\psi^m_z) &=& \mathcal{S}(F^m_z) 
 = i\sigma_3 +
   \frac{2i(m-1)}{1-z\bar{z}} \begin{pmatrix} - z \bar{z} & \bar{z}\lambda^m \\
        -z \lambda^{-m} & z \bar{z} \end{pmatrix}, \hspace{.2cm} 
       \textup{$m$ odd}; \\
\stilde(\psi^m_z) &=& i \sigma_3 + 
   \frac{2i\, m}{1-z\bar{z}} \begin{pmatrix} z \bar{z} &-z \lambda^{-m+1} \\
    \bar{z}\lambda^{m-1} 
         & - z \bar{z} \end{pmatrix}, \hspace{.2cm} \textup{$m$ even}.       
\eeqas 
We know $\psi^m_z = \omega_m \in \mathcal{P}_m$ at $z = 1$, 
and that $\psi^m_z \in \mathcal{B}_{1,1}$ for $|z| \neq 1$;
but, other than the case $m=1$, we see that 
$\stilde(\psi^m_z)$ does not have a finite limit as $z \to 1$.  

We next show that for $\mathcal{P}_2$  this 
behavior is typical. An example corresponding to the following result is 
the two-sheeted hyperboloid of Example \ref{exa:spheres}.
\begin{theorem} \label{blowupprop2}
Let $\phi_n$ be a sequence in $\mathcal{B}_{1,1}$ with 
$\lim_{n \to \infty} \phi_n = \phi_0 \in \mathcal{P}_2$.
Denote the components of $\stilde(\phi_n)$ by 
\begin{small}$\stilde(\phi_n) = \begin{pmatrix} a_n & b_n \\ 
b^*_n & -a_n \end{pmatrix}$.  
\end{small}
Then 
$\lim_{n\to \infty} |a_n| = \lim_{n\to \infty} |b_n| = \infty$, for 
all $\lambda \in \SSS^1$. 
\end{theorem}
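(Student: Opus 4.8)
The plan is to reduce the statement, via conjugation by the Pauli matrix $\sigma_1$, to the behavior already understood at the \emph{first} small cell, and then to play the extension result of Theorem~\ref{extensionthm} against the blow-up established in Proposition~\ref{blowupprop}.

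First I would record the relevant properties of ordinary conjugation $\Ad_{\sigma_1}$. It is an automorphism of $\hh^\C$: it interchanges diagonal and off-diagonal entries, which preserves both the parity (even/odd in $\lambda$) condition and holomorphy in $D_\pm$, and it commutes with $\tau$ (using $\sigma_3\sigma_1=-\sigma_1\sigma_3$, the two reflections compose correctly); hence $\Ad_{\sigma_1}$ preserves $\hh_\tau$, $\hh^\C_+$ and $\hh^\C_-$. Since $\Ad_{\sigma_1}\omega_{2k-1}=\omega_{2k}$, it interchanges $\mathcal{P}_{2k-1}$ and $\mathcal{P}_{2k}$, and therefore, by the disjoint decomposition \eqref{globaldecomp}, it also maps $\mathcal{B}_{1,1}$ to itself. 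In particular, for the given sequence, $\psi_n:=\Ad_{\sigma_1}\phi_n$ is again a sequence in $\mathcal{B}_{1,1}$ with $\psi_n\to\Ad_{\sigma_1}\phi_0\in\mathcal{P}_1$.

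Next I would derive the transformation law of $\stilde$ under $\Ad_{\sigma_1}$. Applying the cocycle formula $\mathcal{S}(xy)=x\,\mathcal{S}(y)\,x^{-1}+2i\lambda\,\partial_\lambda x\cdot x^{-1}$ (from the proof of Lemma~\ref{symlemma}) with $x=\sigma_1$, together with $\sigma_1 i\sigma_3\sigma_1=-i\sigma_3$, gives, for every $F\in\hh$,
\[
  \mathcal{S}(\Ad_{\sigma_1}F)=\Ad_{\sigma_1}\mathcal{S}(F)-2\,\Ad_{\sigma_1}(F i\sigma_3 F^{-1}),
\]
and, applying $\Ad_{\sigma_1}$ again and rearranging,
\[
  \mathcal{S}(F)=\Ad_{\sigma_1}\mathcal{S}(\Ad_{\sigma_1}F)+2\, F i\sigma_3 F^{-1}.
\]
Since for an $SU_{1,1}$-Iwasawa splitting $\phi=FB$ one has $\stilde(\phi)=\mathcal{S}(F)$, while $\Ad_{\sigma_1}\phi=(\Ad_{\sigma_1}F)(\Ad_{\sigma_1}B)$ is again such a splitting, this reads
\[
  \stilde(\phi_n)=\Ad_{\sigma_1}(\stilde(\psi_n))+2\, F_n i\sigma_3 F_n^{-1},
\]
where $F_n\in\hh$ is the $\hh$-factor of $\phi_n$ (note $F_n i\sigma_3 F_n^{-1}$ is insensitive to the $\hh^0$-ambiguity of $F_n$, since $\hh^0$ commutes with $i\sigma_3$).

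Finally I would read off the two terms, which behave oppositely. Because $\psi_n\to\Ad_{\sigma_1}\phi_0\in\mathcal{P}_1$ and, by Theorem~\ref{extensionthm}, $\stilde$ extends real-analytically across $\mathcal{P}_1$, the loop $\stilde(\psi_n)$ converges to $\stilde(\Ad_{\sigma_1}\phi_0)$; so for each $\lambda\in\SSS^1$ every entry of $\Ad_{\sigma_1}(\stilde(\psi_n))$ stays bounded. For the second term, write $F_n={\small\begin{pmatrix} x_n & y_n \\ \pm y_n^* & \pm x_n^*\end{pmatrix}}$; Proposition~\ref{blowupprop}(1) gives $|x_n|,|y_n|\to\infty$ for all $\lambda\in\SSS^1$, and the computation in the proof of Corollary~\ref{sumcor2} shows that $F_n i\sigma_3 F_n^{-1}$ has $(1,1)$-entry of modulus $|x_n|^2+|y_n|^2$ and $(1,2)$-entry of modulus $2|x_n||y_n|$, both tending to $\infty$. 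Taking the $(1,1)$- and $(1,2)$-entries of the displayed identity, and using $[\Ad_{\sigma_1}M]_{11}=M_{22}$ and $[\Ad_{\sigma_1}M]_{12}=M_{21}$, we conclude $|a_n|\to\infty$ and $|b_n|\to\infty$ for every $\lambda\in\SSS^1$, as claimed. The one genuinely delicate point is the bookkeeping of the first step — verifying that $\Ad_{\sigma_1}$ really preserves all the subgroups and cells in play, in particular that it commutes with $\tau$ (and $\sigma$) so that $\mathcal{B}_{1,1}$ is stable under it; the geometric content, the divergence of the normal direction, is handed to us by Proposition~\ref{blowupprop}.
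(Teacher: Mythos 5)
Your proof is correct and follows essentially the same route as the paper: conjugate by $\sigma_1$ to move the limit point into $\mathcal{P}_1$, invoke Theorem \ref{extensionthm} to get a finite limit for $\stilde(\Ad_{\sigma_1}\phi_n)$, and set this against the divergence of $F_n i\sigma_3 F_n^{-1}$ from Proposition \ref{blowupprop}. Your rearrangement $\mathcal{S}(F_n)=\Ad_{\sigma_1}\mathcal{S}(\Ad_{\sigma_1}F_n)+2F_n i\sigma_3 F_n^{-1}$ is an equivalent (and slightly more direct) form of the identity the paper uses, which instead first deduces that $\lambda(\partial_\lambda F_n)F_n^{-1}$ must blow up.
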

\begin{proof}
Let $\phi_n = F_n B_n$ be the $SU_{1,1}$ Iwasawa splitting for $\phi_n$,
and $\phi_0 = F_0 \omega_2 B_0$.  Because $\Ad_{\sigma_1} 
\omega_2 = \omega_1$, $\Ad_{\sigma_1} \phi_0 = \Ad_{\sigma_1}F_0 \,
 \Ad_{\sigma_1}\omega_2 \, \Ad_{\sigma_1} B_0$ is in $\mathcal{P}_1$. 
So $\Ad_{\sigma_1} \phi_n $ is a sequence in $\mathcal{B}_{1,1}$ 
which approaches
$\mathcal{P}_1$. Therefore, by Theorem \ref{extensionthm}, 
there exists a finite limit:
\bdm
\lim_{n\to\infty} \sym(\Ad_{\sigma_1} F_n) = L.
\edm
Now
\beq  \label{parallelsym}
\mathcal{S}(\Ad_{\sigma_1} F_n) = \sigma_1[-F_n i \sigma_3 F_n^{-1} +
   2 i \lambda (\partial_\lambda F_n) F_n^{-1}] \sigma_1,
\eeq
and, from Proposition \ref{blowupprop}, we can write
\bdm
F_n \sigma_3 F_n^{-1} = \begin{pmatrix} \pm (|x_n|^2 + |y_n|^2) 
              & -2x_n y_n \\
     2 x^*_n y^*_n & \mp (|x_n|^2 + |y_n|^2) \end{pmatrix}, 
\edm
where $|x_n| \to \infty$, $|y_n| \to 
\infty$.  Thus, all components of the matrix
$F_n i \sigma_3 F_n^{-1}$ blow up as $n \to \infty$, and, for the
limit $L$ to exist it is necessary that all components of the matrix
$\lambda (\partial_\lambda F_n) F_n^{-1}$ also blow up.  
Now we compute
\beqas
\sym (F_n) &=& F_n i\sigma_3 F_n^{-1} + 2i 
\lambda(\partial_\lambda F_n)F_n^{-1} \\
 &=& -[-F_n i\sigma_3 F_n^{-1} + 2i \lambda(\partial_\lambda F_n)F_n^{-1}] +
    4i \lambda(\partial_\lambda F_n)F_n^{-1} \\
    &=& -\sigma_1 \sym(\Ad_{\sigma_1} F_n) 
\sigma_1 + 4i \lambda(\partial_\lambda F_n)F_n^{-1}.
\eeqas 
Since the first term on the right-hand side has the finite limit 
$-\sigma_1 L \sigma_1$, and all components of the second term diverge,
it follows that all components of $\sym (F_n)$ diverge.
\end{proof}

\pagebreak

\begin{corollary} \label{sumcor4}
Proof of item \ref{summary4} of Theorem \ref{summarythm}.
\end{corollary}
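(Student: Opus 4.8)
The plan is to read off both assertions of item \ref{summary4} from the two blow-up estimates already in hand: Theorem \ref{blowupprop2} for the Sym--Bobenko image, and item (2) of Proposition \ref{blowupprop} for the conformal factor.

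First I would establish the divergence of the surface. Fix $z_0 \in C_2 = \phi^{-1}(\mathcal{P}_2)$ and let $z_n \to z_0$ be an arbitrary sequence in $\Sigma^\circ$; since $\phi$ is continuous and $\phi(z_0) \in \mathcal{P}_2$, the loops $\phi_n := \phi(z_n)$ form a sequence in $\mathcal{B}_{1,1}$ with $\phi_n \to \phi(z_0) \in \mathcal{P}_2$, so the hypotheses of Theorem \ref{blowupprop2} are met. On $\Sigma^\circ$ the surface is $f^{\lambda_0} = -\tfrac{1}{2H}\,\stilde(\phi)\big|_{\lambda = \lambda_0}$, with $\stilde$ as in \eqref{sym3}, and Theorem \ref{blowupprop2} asserts that every entry of $\stilde(\phi_n)$ has modulus tending to $\infty$ for every $\lambda \in \SSS^1$, in particular at $\lambda = \lambda_0$. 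Hence $f^{\lambda_0}(z_n)$ leaves every compact subset of $\R^{2,1}$, which is the first claim.

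Next I would treat the metric. By Remark \ref{metricremark}, in the conformal coordinate $\tilde z = \tfrac{i}{H}\int a_{-1}\,\dd z$ (a biholomorphism near $z_0$, since $a_{-1}$ is holomorphic and nowhere zero) the induced metric of $f^1$ is $4\rho^4(\dd \tilde x^2 + \dd \tilde y^2)$, where $\rho$ is the positive function determined by $B|_{\lambda = 0} = \operatorname{diag}(\rho, \rho^{-1})$ for the normalized splitting $\phi = FB$ with $B \in \uhat$. Because the members of the associate family $\{f^{\lambda_0}\}_{\lambda_0 \in \SSS^1}$ are mutually isometric --- the conformal factor $u$, equivalently $\rho = e^{u/2}$, does not involve $\lambda$ --- the same expression gives the metric of every $f^{\lambda_0}$. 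Now item (2) of Proposition \ref{blowupprop} with $m = 2$ gives $|\rho(z_n)| \to \infty$ along every such sequence $z_n \to z_0$; since near $z_0$ the coefficient $\dd \tilde x^2 + \dd \tilde y^2$ is a fixed positive multiple of $\dd x^2 + \dd y^2$, the induced metric blows up, which is the second claim.

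The only delicate point, and it is a minor bookkeeping matter, is to reconcile normalizations: one must identify the $\rho$ of Remark \ref{metricremark} (coming from $B \in \uhat$) with the modulus of the $\rho_n$ in Proposition \ref{blowupprop}(2), where $B_n$ is merely assumed to lie in $\hh^\C_+$. Passing from an arbitrary $B_n$ to the normalized one multiplies its constant term by a constant diagonal matrix with unit-modulus entries --- exactly the $\hh^0$ ambiguity of the splitting --- so it replaces $\rho_n$ by $|\rho_n|$ and leaves the conclusion $\to \infty$ untouched. Everything else is a direct quotation of the preceding results, so I anticipate no real obstacle.
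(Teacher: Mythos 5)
Your proof is correct and is essentially the paper's own argument: the divergence of $f^{\lambda_0}$ is read off from Theorem \ref{blowupprop2}, and the blow-up of the metric from Remark \ref{metricremark} together with item (2) of Proposition \ref{blowupprop} for $m=2$. The extra care you take with the coordinate change and the $\hh^0$ normalization of $\rho$ is sound bookkeeping that the paper leaves implicit.
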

\begin{proof}
We just showed that $f^{\lambda_0}$ diverges to $\infty$ as $z \to z_0 \in C_2$.
The  metric is given  on $\Sigma^\circ$ by
the expression $4 \rho^4 (\dd x^2 + \dd y^2)$ (see Remark \ref{metricremark}).
By Proposition \ref{blowupprop}, we have $\rho^4 \to \infty$ as $z \to z_0$.
\end{proof}

\subsubsection{The higher small cells}
Numerical experimentation shows that the behavior of the surface as $\mathcal{P}_j$
is approached, for $j \geq 3$, may not be so straightforward.
To analyze the behavior analytically becomes more complicated. In principle, one can obtain explicit
factorizations such as in Lemma \ref{switchlemma} by finite linear algebra,
but we do not attempt an exhaustive account here.
One should observe, however, that, relating the Iwasawa decomposition given
here to Theorem (8.7.2) of \cite{PreS} shows that the higher small cells 
occur in higher codimension in the loop group.

\section{Spacelike CMC surfaces of revolution and equivariant surfaces 
in $\R^{2,1}$}\label{section4-wayne}

\subsection{Surfaces with rotational symmetry}
To make general spacelike rotational CMC surfaces in $\R^{2,1}$, 
we convert a result in \cite{SKKR} to the 
$SU_{1,1}$ case.  This theorem provides us with a frame $F$ that gives 
rotationally invariant surfaces when inserted into the Sym-Bobenko formula.  

\begin{theorem}\label{SU11DelaunayIwasawa}
For $a,b \in \mathbb{R}^*$ and $c \in \mathbb{R}$, 
let $\Sigma = \{ z=x+i y \in \C \, | \, 
-\kappa_1^2 < x < \kappa_2^2 \}$ and choose $\kappa_1,\kappa_2$ so 
that $x \in (-\kappa_1^2,\kappa_2^2)$ 
is the largest interval for which a solution $v=v(x)$ of 
\beq \label{vprimeeqn}
\begin{split} 
& (v^\prime)^2 = (v^2-4a^2)(v^2-4b^2)+4c^2v^2 \; , \\
& v^{\prime \prime} = 2 v (v^2-2a^2-2b^2+2c^2), \\
&  v(0) =  2 b, 
\end{split}
\eeq
 is finite and never zero ($\prime$ denotes $\tfrac{d}{dx}$).  
When $c \neq 0$, we require $v^\prime(0)$ and $-b c$ to have the same 
sign.  Let $\phi$ solve $d\phi = \phi \xi$ on $\Sigma$ for 
$\xi = A dz$ with 
\begin{equation}\label{eqn:formforA} 
A = \begin{pmatrix} c & a \lambda^{-1} + 
b \lambda \\ -a \lambda - b \lambda^{-1} & -c \end{pmatrix}
\end{equation} and $\phi(z=0) = I$.  Then we have the 
$SU_{1,1}$ Iwasawa splitting $\phi = F B$, with 
\[ \phi = \exp((x+iy)A)\; , \;\;\; 
   F = \phi \cdot \exp (-f A) \cdot B_1^{-1} \; , \;\;\; 
   B = B_1 \cdot \exp (f A) \; , \] where, 
taking $\sqrt{\det B_0}$ so that $\sqrt{\det B_0}|_{\lambda=0}>0$, 
\beqas
\begin{aligned}
& f = \int_0^x \frac{2 dt}{1 +(4 a b \lambda^2)^{-1} v^2(t)} \; , \\
&B_1 = \frac{1}{\sqrt{\det B_0}} B_0 \; , \hspace{0.7cm} 
B_0 = \begin{pmatrix} 2 v (b+a \lambda^2) & 
     (2 c v+v^\prime) \lambda \\ 0 & 4 a b \lambda^2 + v^2
     \end{pmatrix} \; . 
\end{aligned}
\eeqas
\end{theorem}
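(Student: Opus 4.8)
The plan is to treat the displayed $F$ and $B$ as an ansatz and to verify directly that $F$ takes values in $\hh$ and $B$ in $\hh^\C_+$. Since $FB=\phi$ will be immediate, Theorem~\ref{section1-thm:SU11Iwa}(\ref{mainthm-(2)}) then identifies $\phi=FB$ as the normalized $SU_{1,1}$-Iwasawa splitting and in particular places $\phi(\Sigma)$ in $\mathcal{B}_{1,1}$. The interval $(-\kappa_1^2,\kappa_2^2)$ is by construction the range on which $v$ is finite and never zero, so $B_1=B_0/\sqrt{\det B_0}$, hence the whole ansatz, is defined on $\Sigma$; and since $v(0)=2b$ and $v$ never changes sign, $v$ keeps the sign of $b$ throughout $\Sigma$.

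First come two elementary reductions. As $f$ is scalar-valued and $A$ is independent of $z$, the matrices $\phi=\exp((x+iy)A)$ and $\exp(\pm fA)$ mutually commute, $\exp(fA)\exp(-fA)=\id$, and $B_1^{-1}B_1=\id$, so $FB=\phi$ holds identically; moreover $F=\exp((x+iy-f)A)\,B_1^{-1}$, and a one-line differentiation gives
\[
F^{-1}dF = d(z-f)\,B_1AB_1^{-1}-dB_1\cdot B_1^{-1},\qquad dB\cdot B^{-1}=dB_1\cdot B_1^{-1}+(df)\,B_1AB_1^{-1}.
\]
At $z=0$ we have $f=0$, and the sign hypothesis on $v'(0)$ forces $v'(0)=-4bc$ — the first line of \eqref{vprimeeqn} gives $v'(0)^2=16b^2c^2$, and among the two roots only $-4bc$ has the sign of $-bc$ — so $2cv+v'$ vanishes at $x=0$, whence $B_0|_{x=0}=4b(b+a\lambda^2)\,\id$, $B_1|_{x=0}=\id$, and therefore $F(0)=B(0)=\id$.

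The substantive step is to evaluate $B_1AB_1^{-1}$, $dB_1B_1^{-1}$ and $f_x=8ab\lambda^2/(4ab\lambda^2+v^2)$ explicitly and to simplify using the ODE system \eqref{vprimeeqn}, principally $(v')^2=v^4-4(a^2+b^2-c^2)v^2+16a^2b^2$ and its differentiated consequence $v''v-(v')^2=v^4-16a^2b^2$. The a~priori unwieldy rational functions of $\lambda$ then collapse:
\[
dB\cdot B^{-1}=\begin{pmatrix} -v'/2v & v\lambda \\ -4ab\lambda/v & v'/2v\end{pmatrix}dx,
\]
which is a degree-one polynomial in $\lambda$ of the correct $\sigma$-parity, hence lies in $Lie(\hh^\C_+)\otimes\Omega^1(\Sigma)$; while $F^{-1}dF=U^\lambda dz+V^\lambda d\bar z$ with $U^\lambda$ carrying only the powers $\lambda^{-1},\lambda^0$, $V^\lambda$ only $\lambda^0,\lambda^1$, and $\tau(U^\lambda dz)=V^\lambda d\bar z$. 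Thus $F^{-1}dF$ is, after the coordinate change $w=\tfrac iH\int a\,dz$ of Section~\ref{section3-wayne}, of the form \eqref{withlambda}, and in particular lies in $Lie(\hh_\tau)\otimes\Omega^1(\Sigma)$; reading off the data yields $\rho=\sqrt{2b/v}>0$, and since $fA=O(\lambda)$ at $\lambda=0$ we get $\exp(fA)|_{\lambda=0}=\id$, so $B|_{\lambda=0}=B_1|_{\lambda=0}=\text{diag}(\rho,\rho^{-1})$.

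To conclude: $F$ solves $dF=F\,(F^{-1}dF)$ with connection form in $Lie(\hh_\tau)\otimes\Omega^1$ and $F(0)=\id\in\hh_\tau$, so $F$ maps into the closed subgroup $\hh_\tau\subset\hh$ (this Delaunay family never leaves the $SU_{1,1}$ component); likewise $B$ solves $dB=(dB\cdot B^{-1})\,B$ with connection form in $Lie(\hh^\C_+)\otimes\Omega^1$ and $B(0)=\id$, so $B$ maps into $\hh^\C_+$, indeed into $\uhat$ by the form of $B|_{\lambda=0}$. With $FB=\phi$ the verification is complete, and Theorem~\ref{section1-thm:SU11Iwa}(\ref{mainthm-(2)}) gives the assertion. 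The one real obstacle is the simplification in the third paragraph: there is no conceptual shortcut, and one must check by hand that \eqref{vprimeeqn} is exactly the relation forcing every spurious power of $\lambda$ to cancel — which is the concrete content of ``converting the result of \cite{SKKR}'', the only new subtlety being that here the reality involution is $\tau(x)=\Ad_{\sigma_3}(\overline{x^t})^{-1}$, so one must additionally confirm that the surviving terms land $F^{-1}dF$ in $Lie(\hh_\tau)$ and $B|_{\lambda=0}$ in the positive reals.
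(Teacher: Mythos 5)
Your proposal is correct and follows essentially the same route as the paper: both verify the given ansatz by computing the Maurer--Cartan forms, checking that $dB\cdot B^{-1}$ (your matrix agrees with the paper's $-(\Theta_1+i\Theta_2)\,dx$) is holomorphic at $\lambda=0$ with traceless coefficient, and that $F^{-1}dF$ is $\tau$-fixed so that $F(0)=I$ forces $F$ into $\hh_\tau$. Your additional remarks --- the sign condition forcing $v'(0)=-4bc$ so that $B_1|_{z=0}=I$, and the identification $\rho=\sqrt{2b/v}$ --- are consistent with, and slightly more explicit than, the paper's own presentation.
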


The second, overdetermining, equation in 
(\ref{vprimeeqn}) excludes certain enveloping solutions.
In particular it removes constant solutions for $v$, except precisely in the 
case where we want them (when $a=\pm b$ and $c=0$).


\begin{figure}
\begin{center}
\includegraphics[height=.30\linewidth]{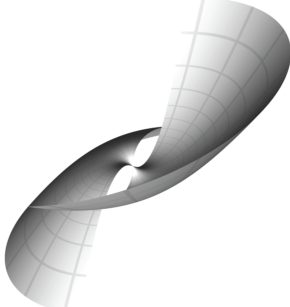} \hspace{.4cm}
\includegraphics[height=.30\linewidth]{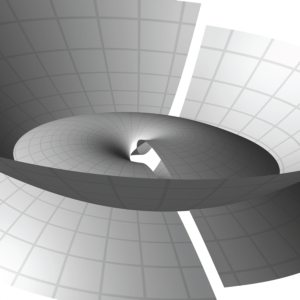} \hspace{.4cm}
\includegraphics[height=.30\linewidth]{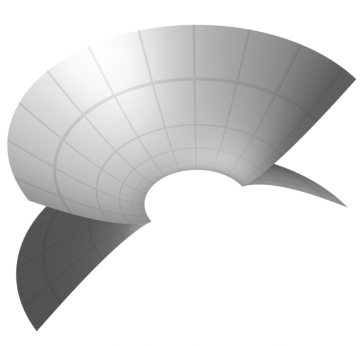}
\end{center}
\caption{A surface of revolution in $T_2$ with timelike axis,
a surface in its associate family, and the parallel constant
Gaussian curvature surface (left to right).
The second and third surfaces
appear to have cuspidal edge singularities.}
\label{fg:4}
\end{figure}


\begin{proof}
Because $B_0|_{z=0} = (4 a b \lambda^2 + 4 b^2) \cdot I$, we have 
$B|_{z=0} = F|_{z=0} = I$.  We set $\Theta=\Theta_1 dx + \Theta_2 dy$, 
where $z=x+iy$, with 
\[ \Theta_1 = \begin{pmatrix}
0 & \tfrac{2 a b}{\lambda v} - \tfrac{v \lambda}{2} \\ 
\tfrac{2 a b \lambda}{v} - \tfrac{v}{2 \lambda} & 0 
\end{pmatrix} \; , \hspace{.7cm}
\Theta_2 = i \begin{pmatrix}
-\tfrac{v^\prime}{2 v} & \tfrac{2 a b}{\lambda v} + \tfrac{v \lambda}{2} \\ 
- \tfrac{2 a b \lambda}{v} - \tfrac{v}{2 \lambda} & \tfrac{v^\prime}{2 v} 
\end{pmatrix} \; . \]  
A computation gives $B_x+(\Theta_1 + i \Theta_2) B = 0$ and 
$\Theta_2 B - i B A = 0$, implying 
$dB+\Theta B - B A (dx+idy) = 0$, and so $F^{-1} dF = \Theta$.  
Noting that 
$\Theta_1 + i \Theta_2$ has no singularity at 
$\lambda = 0$, we have that 
$B$ is holomorphic in $\lambda$ for all $\lambda \in \mathbb{C}$.  
Also, $\text{trace}(\Theta_1 + i \Theta_2) = 0$ 
implies $\det B$ is constant, so 
$\det B = 1$.  Hence
$B$ takes values in $\uhat$. 
We have $\tau(\Theta)= \Theta$, so 
$\tau(F^{-1} dF)=F^{-1} dF$.  It follows from 
$F|_{z=0}=I$ that $\tau(F)=F$, so $F$ takes values in  $\hh_\tau$. 
\end{proof}

\begin{remark}
Note that we must restrict $\kappa_1,\kappa_2$ so that $v$ is 
never zero on $\Sigma$.  
When $v$ reaches zero, this is precisely the moment 
when $\phi$ leaves $\mathcal{B}_{1,1}$.  
Also, note that $v$ can be non-constant even when 
$c = 0$.  A solution to the equation for $v$, for 
example when $0<b<a$ and $c \leq 0$, is given in terms of the 
Jacobi sn function as:
$ v(x) = 2 b \ell^{-1} \text{sn}_{b/(\ell^2 a)}(2 \ell a (x+x_0))$,
where $\ell$ is the largest (in absolute value) of the real solutions
to the equation $a^2\ell^4 + (c^2-a^2-b^2)\ell ^2 + b^2 =0$,
 and $x_0$ is chosen so that $v(0)=2b$ and $v^\prime(0) 
\geq 0$.  
\end{remark}

Inserting the above $F$ into the Sym-Bobenko formula, we get 
explicit parametrizations of CMC rotational surfaces in $\R^{2,1}$.  
Because the mean curvature $H$ and the Hopf differential 
term $Q$ are constant reals, 
and because the metric $ds^2$ is invariant under 
translation of the $z$-plane in the 
direction of the imaginary axis, we conclude that these surfaces are 
rotationally symmetric, by the fundamental theorem 
for surface theory, and we have the following corollary.  

\begin{corollary}\label{lastresult}
Inserting $F$ as in Theorem \ref{SU11DelaunayIwasawa} into 
\eqref{symformula} with $\lambda_0=1$, we have a surface of revolution 
$\hat f^1$ with axis parallel to the line through $0$ and $i A$ in 
$\R^{2,1} \approx \mathfrak{su}_{1,1}$.  
In particular, the axis is timelike, null or spacelike 
when $(a+b)^2-c^2$ is negative, zero or positive, respectively.
\end{corollary}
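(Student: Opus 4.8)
The plan is to exploit the fact that the potential $A$ in \eqref{eqn:formforA} does not depend on $z$, so that $\phi=\exp(zA)$ and the explicit $F$ of Theorem~\ref{SU11DelaunayIwasawa} separates the two real variables. Since $\exp(-fA)$ and $B_1^{-1}$ are functions of $x$ alone and all the exponential factors are functions of the single matrix $A$, we may write $\exp((x+iy)A)\exp(-fA)=\exp(iyA)\exp((x-f)A)$, hence $F(x,y)=\exp(iyA)\,\Gamma(x)$ with $\Gamma(x):=\exp\bigl((x-f(x))A\bigr)B_1(x)^{-1}$ depending only on $x$. Consequently $F(x,y+t)=\exp(itA)\,F(x,y)$, and the cocycle identity $\mathcal S(PQ)=P\,\mathcal S(Q)\,P^{-1}+2i\lambda(\partial_\lambda P)P^{-1}$ from the proof of Lemma~\ref{symlemma} gives
\[
\mathcal S\bigl(F(x,y+t)\bigr)=\exp(itA)\,\mathcal S\bigl(F(x,y)\bigr)\exp(-itA)+2i\lambda\,\partial_\lambda\bigl(\exp(itA)\bigr)\exp(-itA).
\]
Evaluating at $\lambda=\lambda_0=1$ and applying $-\tfrac{1}{2H}$ shows that $t\mapsto\bigl(x\mapsto\hat f^1(x,y+t)\bigr)$ is a one-parameter group of rigid motions, $\hat f^1(x,y+t)=\Ad_{\exp(itA|_{\lambda=1})}\hat f^1(x,y)+\vec c_t$, once one checks, via the matrix-form criterion \eqref{matrixforms}, that $iA|_{\lambda=1}=(a+b)e_2+ce_3$ lies in $\mathfrak{su}_{1,1}$ — so that $\exp(itA|_{\lambda=1})\in SU_{1,1}$, $\Ad_{\exp(itA|_{\lambda=1})}$ is a genuine Lorentz rotation, and the last term is a real translation vector. (Rotational invariance of $\hat f^1$ is already recorded, via the fundamental theorem of surface theory, in the paragraph preceding the corollary; the computation above gives it again and, unlike Bonnet's theorem, pins down the axis.)

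For the axis: $\operatorname{ad}_{iA|_{\lambda=1}}$ annihilates $iA|_{\lambda=1}$, so $\Ad_{\exp(itA|_{\lambda=1})}$ fixes the line $\R\cdot iA|_{\lambda=1}$ pointwise. To rule out a screw component I would use $\partial_\lambda\bigl(\exp(itA)\bigr)\exp(-itA)=it\int_0^1\Ad_{\exp(sitA)}(\partial_\lambda A)\,ds$, observe that $\partial_\lambda A|_{\lambda=1}=(b-a)\sigma_1=(b-a)e_1$ is $\langle\cdot,\cdot\rangle$-orthogonal to $(a+b)e_2+ce_3$, and conclude, using that each $\Ad_{\exp(sitA|_{\lambda=1})}$ is an isometry fixing the axis, that $\vec c_t\perp iA|_{\lambda=1}$. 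Then $x\mapsto\Ad_{\exp(itA|_{\lambda=1})}x+\vec c_t$ is a rotation about a line parallel to $\R\cdot iA|_{\lambda=1}$, because the equation $\vec c_t=(I-\Ad_{\exp(itA|_{\lambda=1})})p$ is solvable: $\operatorname{im}(I-\Ad_{\exp(itA|_{\lambda=1})})=\operatorname{im}(\operatorname{ad}_{iA|_{\lambda=1}})$ equals the $\langle\cdot,\cdot\rangle$-orthogonal complement of the axis, which contains $\vec c_t$. Since $iA|_{\lambda=1}$ is precisely $iA$ evaluated at $\lambda=1$, the axis is parallel to the line through $0$ and $iA$, as claimed.

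Finally, the causal type follows from $\langle iA|_{\lambda=1},iA|_{\lambda=1}\rangle=\tfrac12\,\text{trace}\bigl((iA|_{\lambda=1})^2\bigr)=(a+b)^2-c^2$, equivalently $\langle(a+b)e_2+ce_3,(a+b)e_2+ce_3\rangle=(a+b)^2-c^2$ since $\langle e_2,e_2\rangle=1$ and $\langle e_3,e_3\rangle=-1$; thus the axis is timelike, null, or spacelike exactly when $(a+b)^2-c^2$ is negative, zero, or positive. The one genuinely delicate point is the "no screw component'' step — in particular verifying that $\operatorname{im}(I-\Ad_{\exp(itA|_{\lambda=1})})$ really does equal $(\text{axis})^\perp$, hence that the motion reduces to a pure rotation about a parallel line, uniformly across the timelike, spacelike, and degenerate lightlike cases (where $I-\Ad_{\exp(itA|_{\lambda=1})}$ is not invertible). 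The remaining manipulations are routine.
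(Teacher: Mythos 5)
Your argument is essentially the paper's own proof: the paper likewise writes the rotational symmetry as $F \to \exp(iy_0A)F$, observes that the Sym--Bobenko formula then transforms by conjugation by $\exp(iy_0A)$ plus a translation term $-iH^{-1}\partial_\lambda(\exp(iy_0A))|_{\lambda=1}\exp(-iy_0A)$, and identifies the axis as parallel to the line invariant under this conjugation, i.e.\ $\R\cdot iA|_{\lambda=1}=\R\cdot\bigl((a+b)e_2+ce_3\bigr)$, whose causal character is read off from $\langle iA,iA\rangle=(a+b)^2-c^2$. You simply supply the details (the splitting $F=\exp(iyA)\Gamma(x)$, and the verification that the translation component lies in $\operatorname{im}(\operatorname{ad}_{iA|_{\lambda=1}})=(iA|_{\lambda=1})^\perp$ so the motion is a genuine rotation about a parallel axis) that the paper leaves implicit; this is correct and not a different route.
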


\begin{proof}
The rotational symmetry of the surface is represented by 
$F \to \exp(i y_0 A) F$ at $\lambda_0=1$ for each $y_0 \in 
\R$, and the Sym-Bobenko formula 
changes from $\hat f^1$ to \[ 
\exp(i y_0 A) \hat f^1 \exp(-i y_0 A) - 
i H^{-1} \partial_\lambda (\exp(i y_0 A))|_{\lambda=1} \cdot 
\exp(-i y_0 A) \; . \]  
The axis is then a line parallel to the line invariant under 
conjugation by $\exp(i y_0 A)$.  
\end{proof}

\begin{remark}
Using conjugation by $\text{diag}(\sqrt{i},1/\sqrt{i})$ on all of 
$A$, $\phi$, $F$, $B$, one can see that if we choose $H=-2 a b$, 
Equation \eqref{withlambda} 
gives $v=e^{-u}$ and $Q=1$, for the surfaces in 
Corollary \ref{lastresult}.  
\end{remark}

\subsection{Equivariant surfaces}
By inserting the $F$ in Theorem \ref{SU11DelaunayIwasawa} 
into \eqref{symformula} and evaluating at various 
values of $\lambda_0 \in \mathbb{S}^1$, we get surfaces in the associate 
families of the surfaces of revolution in 
Corollary \ref{lastresult}.  These are the equivariant 
surfaces, which we now describe.  

\begin{definition}
An immersion $f: U \subset \R^2\to\MINK$
is equivariant with respect to $y$
if there exists a continuous homomorphism $R_t:\R\to\calE$
into the group $\calE$ of isometries of $\MINK$
such that
\begin{equation*}
\label{eq:equivariant-def}
f(x,\,y+t) = R_t f(x,\,y).
\end{equation*}
\end{definition}

In the following we write $z=x+iy$.

\begin{proposition}
\label{prop:equivariant1}
Let $f: U \subset \R^2\to\MINK$ be a conformal immersion
with metric $4 v^{-2} \abs{dz}^2$, 
mean curvature $H$, and Hopf differential $Q\,dz^2$.
Then $f$ is equivariant with respect to $y$ if and only if
$v$, $H$ and $Q$ are $y$-independent.
\end{proposition}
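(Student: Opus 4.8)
The plan is to compare $f$ with its $y$-translates $f_t:=f\circ\phi_t$, where $\phi_t\colon(x,y)\mapsto(x,y+t)$, and to exploit the fact that, in the conformal coordinate $z=x+iy$, both fundamental forms of $f$ are built solely out of $v$, $H$ and $Q$. First, I would record that the first fundamental form of $f$ is $4v^{-2}(dx^2+dy^2)$ and, as in Section~\ref{section2-wayne}, the second fundamental form has $(2,0)$-part a fixed multiple of $Q\,dz^2$ and $(1,1)$-part a fixed multiple of $Hv^{-2}|dz|^2$, the multiples depending only on the sign conventions. Since $\phi_t$ is a translation it fixes $dx$, $dy$ and $dz$, so the fundamental forms of $f_t$ are obtained from those of $f$ by replacing $v(x,y),H(x,y),Q(x,y)$ with $v(x,y+t)$, $H(x,y+t)$, $Q(x,y+t)$. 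Throughout I would read the statement with $U$ invariant under $y$-translations (e.g.\ $U=I\times\R$), or else argue locally, so that $\phi_t$ actually maps $U$ to itself.

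For the ``if'' direction, assume $v$, $H$, $Q$ are $y$-independent. Then $f_t$ has the \emph{same} first and second fundamental forms as $f$, so by the fundamental theorem of surface theory for spacelike (hence Riemannian) immersions into $\MINK$ --- the Gauss--Codazzi equations holding automatically, being those of the actual immersion $f$ --- there is an isometry $R_t\in\calE$ with $f_t=R_t\circ f$, i.e.\ $f(x,y+t)=R_t\,f(x,y)$. To upgrade $t\mapsto R_t$ to a continuous homomorphism I would use the rigidity clause of that theorem: since $f$ is an immersion its image contains an open piece of a surface on which an element of $\calE$ acts injectively, so $R_t$ is the \emph{unique} isometry carrying $f$, together with an adapted frame $(f_x,f_y,N)$ at a base point $(x_0,y_0)$, to $f_t$. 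Applying uniqueness to $f\circ\phi_{s+t}=f\circ\phi_s\circ\phi_t$ gives $R_{s+t}=R_sR_t$ and $R_0=I$; and $R_t$ is determined by the images of $f(x_0,y_0)$ and of the frame there, all of which depend continuously (indeed smoothly) on $t$, so $t\mapsto R_t$ is continuous. This gives equivariance.

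For the ``only if'' direction, suppose $f(x,y+t)=R_t\,f(x,y)$ with $R_t\colon\R\to\calE$ a continuous homomorphism. Since $R_0=I$ and $t\mapsto R_t$ is continuous, each $R_t$ lies in the identity component of $\calE$, hence preserves orientation and time-orientation; such isometries preserve the induced metric, the future-pointing timelike unit normal, the mean curvature and the Hopf differential. Writing $f_t=R_t\circ f$ and pulling back (and using that $\phi_t$, being a holomorphic translation, is orientation-preserving and conformal), I would get $\phi_t^*(f^*g)=f^*g$ for the Minkowski metric $g$, $\phi_t^*(Q\,dz^2)=Q\,dz^2$, and $H\circ\phi_t=H$; since $\phi_t$ fixes $dx$, $dy$, $dz$, these say precisely that $v$, $Q$ and $H$ are $y$-independent.

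The step I expect to require the most care is the passage, in the ``if'' direction, from ``for each $t$ there exists an isometry $R_t$'' to ``$t\mapsto R_t$ is a continuous one-parameter subgroup'': this is where the uniqueness (rigidity) part of the fundamental theorem and the immersion hypothesis on $f$ are genuinely used, and where one must be slightly careful about the shape of the domain $U$. The surface-theory input itself --- existence and uniqueness up to an ambient isometry for prescribed fundamental forms --- is standard and is already invoked in Corollary~\ref{lastresult}, so I would cite it rather than reprove it. The remaining computations (that $\mathrm{II}$ really is the stated combination of $v$, $H$, $Q$, and the naturality of the Hopf differential under $\phi_t$ and under elements of the identity component of $\calE$) are routine.
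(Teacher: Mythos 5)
Your argument is correct and is essentially the paper's proof: both compare $f$ with its $y$-translates $f_t(x,y)=f(x,y+t)$, use the fundamental theorem of surface theory to pass between ``$f$ and $f_t$ differ by an ambient isometry'' and ``$f$ and $f_t$ have equal fundamental forms,'' and read off $y$-independence of $v$, $H$, $Q$ from the latter, recovering the homomorphism property of $t\mapsto R_t$ from the composition law. The only difference is that you spell out the points the paper leaves implicit (uniqueness/rigidity to get a continuous one-parameter group, and restriction to orientation- and time-orientation-preserving isometries so that $N$, $H$ and $Q$ are preserved), which is a welcome but not essentially new refinement.
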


\begin{proof}
The proposition is 
shown by the following sequence of equivalent statements:

1. The immersion $f$ is equivariant with respect to $y$.

2. For any $t\in\R$,
the maps $f(x,\,y)$ and $f_t(x,\,y) = f(x,\,y+t)$
differ by an isometry $R_t$ of $\MINK$.
To show statement 1 from 2, note that
$R_{s+t}f(x,\,y) = f(x,\,y+s+t) = R_t f(x,\,y+s) = R_t R_s f(x,\,y)$, 
so under suitable non-degeneracy conditions on $f$, the map $t\mapsto R_t$ is
a continuous homomorphism.

3. The immersions $f$ and $f_t$ have equal first and second
fundamental forms. Statements 2 and 3 are equivalent by the
fundamental theorem of surface theory.

4. The geometric data for $f$ satisfy
$v(x,\,y+t) = v(x,\,y)$, $H(x,\,y+t)=H(x,y)$ and $Q(x,\,y+t) = Q(x,\,y)$.

5. The functions $v$, $H$ and $Q$ are $y$-independent.
\end{proof}


\begin{proposition}
\label{prop:equivariant2}
Let $f: \Sigma \subset \C \to\MINK$ be a conformal CMC $H$ immersion
with metric $4 v^{-2} \abs{dz}^2$
and Hopf differential  $Q\,dz^2$.
Take $q \in \real^* := \real \setminus \{ 0 \}$ so that $4H^2 = q^2$, 
and suppose $|Q|$ is $1$ at some point in $\R^2$.  
Then $f$ is equivariant with respect to $y$
if and only if
$Q$ is constant,
$v$ depends only on $x$, and for some $p\in\R$, $v$ satisfies
\begin{equation}
\label{eq:equivariant-v}
{v'}^2 = v^4 -2 p v^2 + q^2 , \quad
v'' = 2v(v^2-p).
\end{equation}
\end{proposition}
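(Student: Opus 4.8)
The strategy is to reduce Proposition~\ref{prop:equivariant2} to Proposition~\ref{prop:equivariant1}, which already characterizes equivariance by the $y$-independence of $v$, $H$ and $Q$. Since $H$ is assumed constant from the start, the only content is to show that, given $Q$ constant (in particular $y$-independent) and $v=v(x)$, the Gauss--Codazzi equations \eqref{compatibility1} force $v$ to satisfy the ODE system \eqref{eq:equivariant-v}, and conversely. Write $v = e^{-u}$, so that the metric $4v^{-2}|dz|^2 = 4e^{2u}|dz|^2$ matches \eqref{eqn:dssquared}.

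\textbf{Forward direction.} Suppose $f$ is equivariant with respect to $y$. By Proposition~\ref{prop:equivariant1}, $v$, $H$, $Q$ are all $y$-independent; so $v=v(x)$ and, since $H$ is already a nonzero constant, it remains to see that $Q$ is a (complex) \emph{constant}. The second Gauss--Codazzi equation in \eqref{compatibility1}, namely $Q_{\bar z} = 2e^{2u}H_z = 0$ since $H$ is constant, shows $Q$ is holomorphic; but a holomorphic function depending only on $x = \operatorname{Re} z$ must be constant. Rescaling $z$ by a unit complex constant (which preserves the form $|dz|^2$ of the metric and the conformality) we may assume $Q$ is a nonnegative real constant, and the hypothesis $|Q|=1$ somewhere, together with $Q$ constant, gives $Q=1$, hence $|Q|^2 = 1$. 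Now plug $u = u(x)$, $H$ constant with $4H^2 = q^2$, and $|Q|^2 = 1$ into the first equation of \eqref{compatibility1}: since $u_{z\bar z} = \tfrac14 u''$ (as $u$ depends only on $x$), we get $\tfrac14 u'' - H^2 e^{2u} + \tfrac14 e^{-2u} = 0$. Translating back via $v = e^{-u}$, $v' = -u' e^{-u}$, $v'' = (u'^2 - u'')e^{-u}$, a direct substitution turns this into the second equation $v'' = 2v(v^2 - p)$ of \eqref{eq:equivariant-v} for a suitable real constant $p$ (one reads off $p$ in terms of $q$ and a first integral). Multiplying $v'' = 2v(v^2-p)$ by $2v'$ and integrating once yields $v'^2 = v^4 - 2pv^2 + C$; evaluating the constant of integration $C$ using the first-order relation coming directly from \eqref{compatibility1} (before differentiating) pins down $C = q^2$, giving the first equation of \eqref{eq:equivariant-v}.

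\textbf{Converse.} Suppose $Q$ is constant with $|Q|=1$ somewhere (hence $|Q|\equiv 1$), $v = v(x)$, and \eqref{eq:equivariant-v} holds. Then $v$, $H$ and $Q$ are manifestly $y$-independent, so by Proposition~\ref{prop:equivariant1} the immersion $f$ is equivariant with respect to $y$. One should also check consistency, i.e.\ that $v$ satisfying \eqref{eq:equivariant-v} does solve the Gauss--Codazzi equation \eqref{compatibility1}: the two equations in \eqref{eq:equivariant-v} are compatible (differentiate the first, use the second) and their combination is exactly $\tfrac14 v''/v \cdot(\text{stuff})$ reproducing $u_{z\bar z} = H^2 e^{2u} - \tfrac14|Q|^2 e^{-2u}$, so an immersion with this data exists by the fundamental theorem for surfaces and is equivariant.

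\textbf{Main obstacle.} The only delicate point is bookkeeping the constants: matching $p$ and the integration constant $q^2$ correctly when passing between the $u$-form of the Gauss equation and the $v$-form, and making sure the normalization $|Q|=1$ (used to fix $Q=1$ and eliminate an extra scaling parameter) is handled cleanly so that the stated ODE \eqref{eq:equivariant-v} — with precisely the coefficient $q^2 = 4H^2$ — comes out, rather than a version with an undetermined scale. The geometric input is entirely contained in Proposition~\ref{prop:equivariant1} and the compatibility equations \eqref{compatibility1}; everything else is a one-variable ODE manipulation.
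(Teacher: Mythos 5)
Your argument is correct and is essentially the paper's own proof: reduce to Proposition~\ref{prop:equivariant1}, conclude $Q$ is constant from holomorphicity (the Codazzi part of \eqref{compatibility1}) together with $y$-independence, and turn the Gauss equation into an ODE in $x$ whose first integral (multiply by $u'$ and integrate once, with $p$ the constant of integration) is exactly \eqref{eq:equivariant-v}, the converse being immediate from Proposition~\ref{prop:equivariant1}. The only cosmetic difference is bookkeeping: in that single first integral the constant term $q^2=4H^2$ appears automatically and $p$ enters as the coefficient of $v^2$, so there is no separate constant $C$ to pin down afterwards, and the rotation of $z$ to make $Q=1$ is unnecessary since only $|Q|^2$ enters the Gauss equation.
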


\begin{proof}
If $f$ is equivariant, then $v$ and $Q$ are $y$-independent
by Proposition~\ref{prop:equivariant1}.
Since $f$ is CMC, then $Q$ is holomorphic in $z$, and is hence constant.
So $|Q| \equiv 1$.  
Since $v$ is $y$-independent, the Gauss equation~\eqref{compatibility1}
with $v=e^{-u}$ is 
a second order ODE in $x$.  Multiplying the Gauss equation
by $u'$ and integrating yields~\eqref{eq:equivariant-v}, where
$p$ is a constant of integration.

Conversely, if $v$ and $Q$ satisfy the conditions of the proposition,
then $f$ is equivariant, by Proposition~\ref{prop:equivariant1}.
\end{proof}

\begin{corollary}
\label{cor:equivariant3}
Any immersion $\hat f^{\lambda_0}$ into $\MINK$ as in 
\eqref{symformula}, obtained from 
a DPW potential of the form $Adz$,
where $A$ is given by~\eqref{eqn:formforA}, 
is a conformal CMC immersion equivariant with respect to $y$.

Conversely, up to an isometry of $\MINK$, every non-totally-umbilic
conformal  spacelike CMC $H \neq 0$ 
immersion equivariant with respect to $y$ is obtained from
some DPW potential $Adz$, where $A$ is of the form~\eqref{eqn:formforA}.
\end{corollary}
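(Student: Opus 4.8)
The plan is to reduce the statement to Propositions~\ref{prop:equivariant1} and~\ref{prop:equivariant2} and the DPW construction of Theorem~\ref{dpwthm}. For the forward direction, I would start from a potential $\xi = A\,dz$ with $A$ as in~\eqref{eqn:formforA}. By Theorem~\ref{dpwthm}, any $G$-Iwasawa splitting $\phi = FB$ of a solution $\phi$ of $\phi^{-1}d\phi = \xi$ produces, via the Sym--Bobenko formula, a conformal CMC $H$ immersion $\hat f^{\lambda_0}$; the metric is $4\rho^4\abs{d\tilde z}^2$ and the Hopf differential is $Q\,d\tilde z^2$ with $Q = -2H\,b_{-1}/a_{-1}$ (Remark~\ref{metricremark}). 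For the present $A$, the entries $a_{-1} = a$ and $b_{-1} = -b$ are \emph{constant}, so $Q$ is constant; in particular $\xi$ is autonomous. The key observation is that translation $z \mapsto z + it$ sends a solution $\phi(z)$ to another solution $\phi(z+it) = \phi(it)\,\phi(z)$ (since $\xi$ has constant coefficients), and $\phi(it) = \exp(itA) \in \hh^\C$. Tracking this left multiplication through the Iwasawa splitting and the Sym--Bobenko formula shows, via Theorem~\ref{symthm}\eqref{symthm3} and the cocycle property, that $\hat f^{\lambda_0}(x,y+t)$ and $\hat f^{\lambda_0}(x,y)$ differ by an isometry $R_t$ of $\MINK$ depending continuously on $t$; alternatively, and more cleanly, one simply notes that $v$, $H$, $Q$ are all $y$-independent (the ODE~\eqref{vprimeeqn}, or its reduction~\eqref{eq:equivariant-v}, is autonomous in $x$ and $v(0)$ is prescribed independently of $y$) and invokes Proposition~\ref{prop:equivariant1} directly.

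For the converse, I would start with an arbitrary non-totally-umbilic conformal spacelike CMC $H\neq 0$ immersion $f$ equivariant with respect to $y$. After a conformal change of coordinate and rescaling we may normalize so that the metric is $4v^{-2}\abs{dz}^2$ and $\abs{Q} \equiv 1$ at some point, matching the hypotheses of Proposition~\ref{prop:equivariant2}; since $f$ is equivariant and CMC, that proposition gives that $Q$ is a constant of modulus $1$ and that $v = v(x)$ satisfies the first-order ODE~\eqref{eq:equivariant-v} with $q = 2H$. Then I would exhibit a DPW potential of the form~\eqref{eqn:formforA} whose associated surface is $f$: one reads off the parameters by matching $Q = -2H b/a$ (a constant of modulus one fixes $b/a$ up to a phase, which can be absorbed), and $c$ together with the relation between $a,b,c$ and the constant $p$ in~\eqref{eq:equivariant-v} is determined by comparing~\eqref{eq:equivariant-v} with the reduction of~\eqref{vprimeeqn}; concretely $(v')^2 = (v^2-4a^2)(v^2-4b^2)+4c^2v^2$ must equal $v^4 - 2pv^2 + q^2$, giving $4(a^2+b^2) - 4c^2 = 2p$ and $16a^2b^2 = q^2 = 4H^2$. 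This identifies a potential $A\,dz$ of the required form. By the uniqueness part of the fundamental theorem of surface theory, the CMC immersion produced from this potential via Theorem~\ref{dpwthm} has the same first and second fundamental forms as $f$, hence agrees with $f$ up to an isometry of $\MINK$. One must also handle the initial condition and the choice of branch/sign for $v'(0)$ so that the solution of~\eqref{vprimeeqn} is the correct one; this is exactly the sign constraint built into Theorem~\ref{SU11DelaunayIwasawa}.

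I expect the main obstacle to be the converse: verifying that the parameters $(a,b,c)$ can genuinely be chosen so that the autonomous ODE~\eqref{vprimeeqn}, \emph{including its overdetermining second equation} $v'' = 2v(v^2 - 2a^2 - 2b^2 + 2c^2)$, is satisfied by the given $v$ — i.e.\ that no enveloping/spurious solutions obstruct the matching — and that the resulting surface is defined on the same domain (the interval where $v \neq 0$). The totally-umbilic case is excluded precisely because it corresponds to constant $v$ with $a = \pm b$, $c = 0$, which the hypothesis removes. Once the ODE parameters are matched, the remaining identification is a routine appeal to the fundamental theorem, since both surfaces are conformal with the same metric $4v^{-2}\abs{dz}^2$, the same constant $H$, and the same constant Hopf differential.
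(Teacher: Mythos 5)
Your route is the same as the paper's: forward direction from the special structure of the frame generated by $A\,dz$, converse via Proposition~\ref{prop:equivariant2}, matching \eqref{eq:equivariant-v} against \eqref{vprimeeqn}, and the fundamental theorem of surface theory. One caution on the forward direction: your ``cleaner'' alternative (b) is circular as stated, because invoking Proposition~\ref{prop:equivariant1} requires knowing that the conformal factor of the \emph{constructed} surface is $y$-independent, and that is exactly what has to be proved; it does not follow from the autonomy of \eqref{vprimeeqn}. Your alternative (a) does work and is essentially the paper's argument: since $it A\in Lie(\hh_\tau)$ for $t\in\R$, one has $\phi(z+it)=\exp(itA)\phi(z)$ with $\exp(itA)\in\hh_\tau$, so the normalized splitting gives $F(x,y+t)=\exp(itA)F(x,y)$ and $B(x,y+t)=B(x,y)$, and the Sym--Bobenko formula then changes by conjugation plus a translation; the paper gets the same frame form $F(x,y)=\exp(iyA)\mathcal{G}(x)$ directly from Theorem~\ref{SU11DelaunayIwasawa}.

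In the converse there are two concrete loose ends. First, the phase of $Q$ cannot be ``absorbed'' into $b/a$: in \eqref{eqn:formforA} the parameters $a,b$ are real (Theorem~\ref{SU11DelaunayIwasawa}), so $-2Hb_{-1}/a_{-1}$ is real, and you also cannot rotate the $z$-coordinate to make $Q$ real without changing the direction of equivariance. The phase must be placed in the evaluation point: since $\hat f^{\lambda_0}$ has Hopf differential $\lambda_0^{-2}$ times the potential's $Q$ (Theorem~\ref{symthm}), one chooses $\lambda_0\in\SSS^1$ with $Q=\lambda_0^{-2}$ (the sign being available through $q=\pm4ab$); this is how the paper proceeds. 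Second, the ``main obstacle'' you flag is not actually an obstacle, while the check you omit is the real one. Proposition~\ref{prop:equivariant2} already delivers \emph{both} equations in \eqref{eq:equivariant-v}, in particular $v''=2v(v^2-p)$; once $p=2(a^2+b^2-c^2)$ and $q^2=16a^2b^2$, the overdetermining equation in \eqref{vprimeeqn} coincides with it identically, so no spurious-solution issue arises. What must be verified is that a real $c$ with $p=2(a^2+b^2-c^2)$ exists: setting $b=v(0)/2$ (nonzero since the metric is nondegenerate) and defining $a$ by $H=-2ab$, the inequality ${v'(0)}^2=16b^4-8pb^2+16a^2b^2\ge 0$ gives exactly $p\le 2(a^2+b^2)$, which is the step the paper records; the sign of $c$ is then chosen to satisfy the sign condition of Theorem~\ref{SU11DelaunayIwasawa}, as you anticipated. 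With these two points supplied, your argument matches the paper's proof.
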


\begin{proof}
By Theorem~\ref{SU11DelaunayIwasawa},
the extended frame of the immersion obtained
from $A$ is of the form $F(x,\,y) = \exp(iyA)\mathcal{G}(x)$ for 
some map
$\mathcal{G}: J \to SU_{1,1}$, where $J = (-\kappa_1^2, \kappa_2^2) \subset \real$.
   The Sym-Bobenko formula 
$\hat f^{\lambda_0}$ applied to $F$
yields an immersion which is equivariant with respect to $y$.

Conversely, given a CMC immersion
 $f: (-\tilde{\kappa}_1^2, \tilde{\kappa}_2^2) \times \real \subset  \R^2\to\MINK$, which is equivariant
with respect to the second coordinate $y$, let $4 v^{-2} \abs{dz}^2$ and $Qdz^2$
be the metric and Hopf differential 
 for $f$, respectively.
By a dilation  of coordinates $z \to r z$ for a constant 
$r \in  \R$, we may assume $|Q|=1$.
Let $q$ be as in Proposition~\ref{prop:equivariant2}.
  By that proposition,
$v$ satisfies~\eqref{eq:equivariant-v} for some $p\in\R$.
Let $b = v(0)/2$, and define $a \in \real^*$ by the equation $H=-2 a b$,
and so $q=\pm 4ab$.  
Then it follows that $p \leq 2 (a^2+b^2)$, and 
there exist $c\in\R$ and $\lambda_0 \in \SSS^1$
such that $p = 2(a^2+b^2-c^2)$ and $Q = \lambda_0^{-2}$.  
Let $\hat f^{\lambda_0}$ be the immersion induced
from the DPW potential $\xi =A\dd z$, with $A$ as in
 Theorem \ref{SU11DelaunayIwasawa}, initial condition
$\Phi(0)=\id$, and $\lambda_0$ and $H=-2 a b$.  Then 
$\hat f^{\lambda_0}$ has metric $4 v^{-2} \abs{dz}^2$, 
by Theorems \ref{SU11DelaunayIwasawa} and \ref{dpwthm}, and has
mean curvature $-2 a b$ and Hopf differential
$\lambda_0^{-2} dz^2$. By the fundamental theorem of surface theory,
$f$ and $\hat f^{\lambda_0}$ differ by an isometry of $\MINK$.
\end{proof}

We now describe the two spaces $R/\!\!\sim_R$ and $E/\!\!\sim_E$
of immersions into $\MINK$ which are rotationally invariant and
equivariant, respectively.
Both constructions are based on the family
of solutions to the integrated Gauss equation~\eqref{eq:equivariant-v},
where solutions are identified which amount to a coordinate shift
and hence yield ambiently isometric immersions.
Bifurcations in the space of solutions to 
Equation~\eqref{eq:equivariant-v}
lead to non-Hausdorff 
quotient spaces.


\begin{figure}[ht]
\includegraphics[width=5.4in]{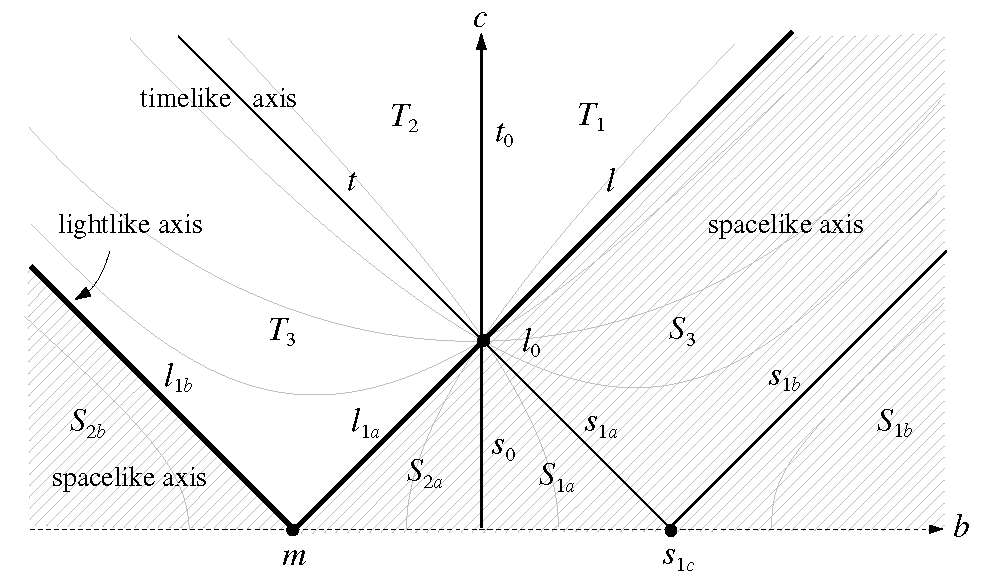}
\caption{\small A blowup of the moduli space of surfaces with rotational
symmetry $\R^{2,1}$.
The blowdown is obtained by identifying points along segments
of hyperbolas within each region.
The heavily drawn left v-shaped line represents the lightlike
axis examples, separating those with spacelike and timelike axes.
The line segments and points in the diagram represent examples
whose metrics degenerate to elementary functions;
in particular, the $c$-axis represents hyperboloids.
Pairs in the same associate family are represented by points
reflected across the $c$-axis.
}\label{fig:moduli}
\end{figure}


The space $R/\!\!\sim_R$ of immersions with rotational symmetry is a quotient
of the space
\begin{equation*}
\label{eq:equivariantR}
R = \{(p,\,q,\,v_0)\in\R^3 \suchthat v_0^4-2pv_0^2+q^2 \ge 0\}
\end{equation*}
parametrizing solutions to~\eqref{eq:equivariant-v}.
The equivalence relation $\sim_R$ on $R$ is defined as follows:
$(p_1,\,q_1,\,v_1)\!\!\sim_R\!\!(p_2,\,q_2,\,v_2)$ if,
for $k=1$ and $2$, the respective solutions to
\begin{equation} \label{vprimesquaredeqn}
\begin{split}
&{v'}^2 = v^4 - 2 p_k v^2 + q_k^2, \\
& v'' =  2v(v^2-p_k),\\
& v(0) = v_k,
\end{split}
\end{equation}
are equivalent in the following sense:
there exist $r\in\R_+$ and $c\in\R$ such that $v_2(x) = rv_1(rx+c)$
or $v_2(x) = -rv_1(rx+c)$.
The space $R/\!\!\sim_R$ is a $1$-dimensional 
non-Hausdorff manifold. For a point in $R$ with 
$q\neq 0$, the corresponding surface
is constructed by relating (\ref{vprimesquaredeqn}) to (\ref{vprimeeqn}).
This determines $a$, $b$ and $c$ in Theorem \ref{SU11DelaunayIwasawa},
and the surface is given by Corollary \ref{lastresult}. If $q=0$, 
the surface is totally umbilic.

To describe the space of equivariant immersions, let
\begin{equation*}
\label{eq:equivariantE}
E = \{(p,\,P,\,v_0)\in\R\times\C\times\R
\suchthat v_0^4-2pv_0^2+\abs{P}^2 \ge 0\}.
\end{equation*}
The equivalence relation $\sim_E$ on $E$ is
defined as follows: $(p,\,P,\,v_0)\!\!\sim_E\!\!(p',\,P',\,v_0')$ if
there exist $q,\,q'\in\R$ and $\lambda\in\SSS^1$
such that $P = q\lambda^{-2}$ and $P' = q'\lambda^{-2}$, and
$(p,\,q,\,v_0)\!\!\sim_R\!\!(p',\,q',\,v_0')$.
The space $E/\!\!\sim_E$ is a $2$-dimensional 
non-Hausdorff manifold.  The surface corresponding to a point in 
$E$, when $P \neq 0$, is as  in the case of the space $R$, 
except that the Sym-Bobenko formula now uses general $\lambda \in \SSS^1$ (not 
necessarily $\lambda = 1$).  When $P=0$, the surface is totally umbilic. 

The above constructions are summarized as: 
\begin{theorem}
\label{thm:equivariant-moduli}
Up to coordinate change and ambient isometry,
the spaces $E/\!\!\sim_E$ and $R/\!\!\sim_R$ are the moduli spaces of
CMC immersions into $\MINK$ which are respectively 
equivariant and rotational.
\end{theorem}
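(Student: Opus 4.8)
The plan is to show that the constructions of the previous results set up, for each of the two cases, a surjection from the parameter space onto the relevant moduli set, and that the fibres of this surjection are precisely the orbits of $\sim_R$, respectively $\sim_E$. Both surjections depend real analytically on parameters by Theorems \ref{SU11DelaunayIwasawa} and \ref{dpwthm}, so the induced bijections are the required identifications as (non-Hausdorff) manifolds.

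First, \emph{surjectivity}. Given $(p,q,v_0)\in R$ with $q\neq 0$, solve \eqref{vprimesquaredeqn}, set $b=v_0/2$, pick $a\in\real^*$ with $q=\pm 4ab$, and then $c\in\R$ with $p=2(a^2+b^2-c^2)$; the inequality $p\le 2(a^2+b^2)$ required for the last choice is forced by $v_0^4-2pv_0^2+q^2\ge 0$, exactly as in the proof of Corollary \ref{cor:equivariant3}. The potential $A\,dz$ of \eqref{eqn:formforA} then yields, by Theorem \ref{SU11DelaunayIwasawa} and Corollary \ref{lastresult} (with $\lambda_0=1$ and $H=-2ab$), a rotational CMC immersion with metric $4v^{-2}\abs{dz}^2$. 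For a point $(p,P,v_0)\in E$ with $P\neq 0$ one does the same with $q=\abs{P}$ and applies the Sym--Bobenko formula \eqref{symformula} at the $\lambda_0\in\SSS^1$ determined by $\lambda_0^{-2}=P/q$; by Corollary \ref{cor:equivariant3} this is an equivariant immersion. If $q=0$ (respectively $P=0$) the system \eqref{eq:equivariant-v} forces a totally umbilic surface, which is trivially rotational (respectively equivariant). Conversely, Corollary \ref{cor:equivariant3} together with Proposition \ref{prop:equivariant2} shows that every non-umbilic equivariant CMC $H\neq 0$ immersion, after the dilation normalizing $\abs{Q}=1$, has $v=v(x)$ solving \eqref{eq:equivariant-v} for some $p$ and so arises, up to ambient isometry, from a point of $E$; if it is rotational then $Q$ can be made real, so $\lambda_0=1$ and the point lies in $R\subset E$.

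Second, \emph{the fibres are the $\sim$-orbits}. Here the engine is the fundamental theorem of surface theory: a conformal CMC immersion into $\MINK$ is determined, up to an ambient isometry, by the triple $(v,H,Q)$. If two parameter points are related by $\sim_R$ (respectively $\sim_E$) one checks directly, using the formulas $v_0=2b$, $q=\pm 4ab$, $p=2(a^2+b^2-c^2)$ and $P=q\lambda_0^{-2}$, that the two triples $(v,H,Q)$ differ by a conformal coordinate change $z\mapsto rz+c$ (together, in the equivariant case, with a rotation $z\mapsto e^{i\theta}z$ of the domain) and a homothety of $\MINK$ absorbed into the choice of $H$; hence the surfaces agree up to coordinate change and ambient isometry, and the surjections descend to the quotients. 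Conversely, suppose two such immersions become ambiently isometric after some coordinate change. That coordinate change is holomorphic or antiholomorphic and carries the foliation by $y$-orbits to itself, hence is an affine conformal map $z\mapsto rz+c$ up to the reflections $z\mapsto -z$, $z\mapsto\bar z$ and, in the equivariant case, a domain rotation; tracking its effect on $(v,H,Q)$ through the parametrization above, and using the normalizations built into the definitions of $R$ and $E$, one recovers precisely the relation $\sim_R$ (respectively $\sim_E$). Thus the fibres are exactly the equivalence classes, so the maps $R/\!\!\sim_R$ and $E/\!\!\sim_E$ onto the moduli sets are bijections, which proves the theorem.

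The main obstacle is this last step: verifying that the only coordinate changes respecting equivariance are the affine conformal maps (plus the listed discrete moves), so that no spurious identifications appear, and---more delicate---pinning down how the choice of base point $\lambda_0$ in the associate family interacts with rotations of the domain. It is this interaction that forces the data in the equivariant case to be the complex parameter $P=q\lambda_0^{-2}$ rather than a real one, that is responsible for the precise form of $\sim_E$, and that is the source of the bifurcations in the solution family of \eqref{eq:equivariant-v} making the quotients non-Hausdorff.
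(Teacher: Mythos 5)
Your proposal is correct and follows essentially the same route as the paper, which in fact offers no separate proof of Theorem \ref{thm:equivariant-moduli}: the statement is presented as a summary of Theorem \ref{SU11DelaunayIwasawa}, Corollary \ref{lastresult}, Propositions \ref{prop:equivariant1}--\ref{prop:equivariant2} and Corollary \ref{cor:equivariant3}, and your argument is the natural fleshing-out of exactly those ingredients (surjectivity from the constructions, identification of fibres with $\sim$-orbits via the fundamental theorem of surface theory). The one point you rightly sense as delicate---that $\sim_R$ with $r\neq 1$ rescales the mean curvature, so the identification is really up to ambient homothety, i.e.\ equivalently up to isometry after fixing $H$---is an imprecision already present in the theorem's wording rather than a defect of your proof.
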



\subsection{The moduli space of surfaces with rotational symmetry}

Figure~\ref{fig:moduli} shows a blowup of the moduli space of surfaces with
rotational symmetry in $\MINK$.
The underlying space
is the closed $(b,c)$-half-plane
obtained by the normalization $\lambda=1$ and $a=1$.
The blowdown to the 1-dimensional moduli space of surfaces
is the quotient modulo identification of points on segments of hyperbolas
$1+b^2-c^2 = (\mathrm{constant})\cdot b$
 foliating each region.
The examples with spacelike, timelike and lightlike axes
are represented respectively by the shaded and unshaded regions,
and the left heavily-drawn v-shaped line.
Subscripted letters $S$, $L$ and $T$ denote
one-parameter families with spacelike, lightlike and timelike axes, 
respectively;
likewise, $s$, $\ell$ and $t$ designate single examples,
and the example $m$ has no axis.
The moduli space is a connected non-Hausdorff space, and  
 is the disjoint union of eight one-parameter families
$S_{1a}$, $S_{1b}$, $S_{2a}$, $S_{2b}$, $S_3$,
 $T_1$, $T_2$, $T_3$,
eight individual examples
$s_{1a}$, $s_{1b}$, $s_{1c}$,
$\ell_{1a}$, $\ell_{1b}$, $m$, $\ell$, $t$,
and the hyperboloids corresponding to
$s_0$, $\ell_0$, $t_0$ considered with
spacelike, lightlike and timelike axes respectively.

The non-Hausdorffness of the moduli space arises from the fact that
the limit surface of a sequence of surfaces in any of the one-parameter
families (designated by capital letters)
to a point not in that family is not
uniquely determined:
the sequence will have different limit surfaces
depending on how the sequence is chosen to be positioned in $\MINK$.
The blowup of the moduli space shown in Figure~\ref{fig:moduli}
maps this topology.
For example, the same sequence of surfaces
in $S_3$ can converge to either $s_{1a}$, $s_{1b}$ or $s_{1c}$;
likewise a sequence in $T_3$ can converge to 
either $l_{1a}$, $l_{1b}$ or $m$.\\

\begin{figure}[ht]
\centering
$
\begin{array}{cccc}
\includegraphics[width=3cm]{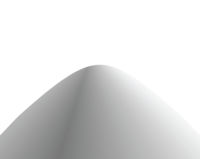}
& \includegraphics[width=3cm]{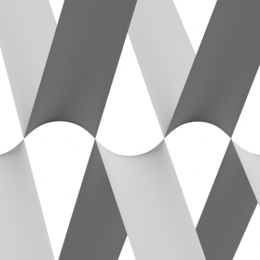}
& \includegraphics[width=3cm]{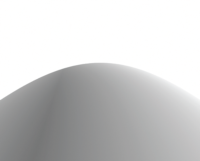}
& \includegraphics[width=3cm]{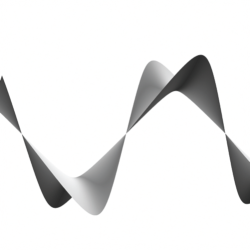}
\\
  \delcaption{S_{1a}}{\frac{1}{2}}{0}
  & \delcaption{S_{1b}}{2}{0}
  & \delcaption{S_{2a}}{-\frac{1}{2}}{0}
  & \delcaption{S_{2b}}{-2}{0}
\vspace{2ex} \\
\includegraphics[width=3cm]{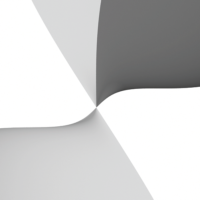}
& \includegraphics[width=3cm]{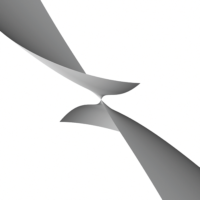}
& \includegraphics[width=3cm]{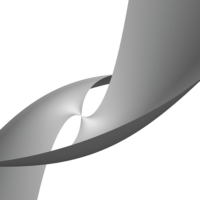}
& \includegraphics[width=3cm]{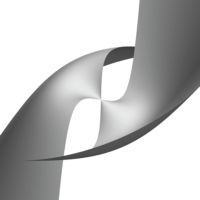}
\\
  \delcaption{S_3}{1}{\sqrt{2}}
  & \delcaption{T_1}{1}{4}
  & \delcaption{T_2}{-1}{4}
  & \delcaption{T_3}{-1}{\sqrt{2}}
\end{array}
$
\caption{Examples from each of the eight families of
surfaces with rotational symmetry in $\MINK$.
These families together with the eight single examples shown
in Figure~\ref{fig:delaunay2} comprise all such surfaces.
The designation symbol and the numbers $(b,\,c)$
refer to the blowup of the moduli space in Figure~\ref{fig:moduli}.
Note that entire examples are necessarily complete \cite{cheng-yau}.
Images created with XLab \cite{xlab}.} 
\end{figure}


\begin{figure}[ht]
\centering
$
\begin{array}{cccc}
\includegraphics[width=3cm]{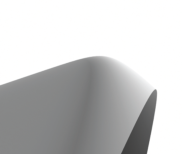}
& \includegraphics[width=3cm]{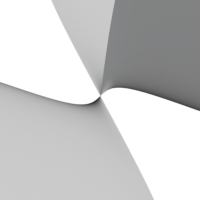}
& \includegraphics[width=3cm]{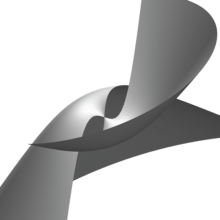}
& \includegraphics[width=3cm]{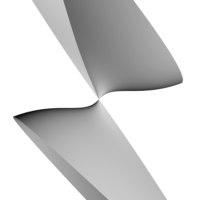}
\\
  \delcaption{s_{1a}}{\frac{1}{2}}{\frac{1}{2}}
  & \delcaption{s_{1b}}{2}{1}
  & \delcaption{t}{-1}{2}
  & \delcaption{\ell}{1}{2} \\
\includegraphics[width=3cm]{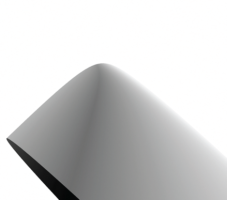}
& \includegraphics[width=3cm]{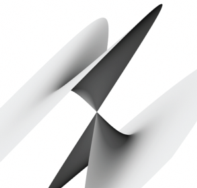}
& \includegraphics[width=3cm]{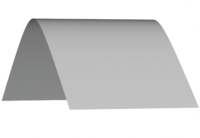}
& \includegraphics[width=3cm]{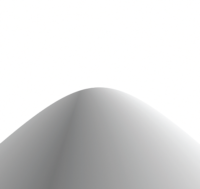}
\\
 \delcaption{\ell_{1a}}{-\frac{1}{2}}{\frac{1}{2}}
 & \delcaption{\ell_{1b}}{-2}{1}
 & s_{1c},\,m
 & s_0,\,\ell_0,\,t_0
\vspace{2ex}
\end{array}
$
\caption{The eight surfaces with rotational symmetry in $\MINK$ whose
metric is an elementary function.
The last two examples,
a cylinder over a hyperbola and a hyperboloid
respectively, appear multiple times in the blowup of the moduli space.
Designation symbols 
are as  in Figure~\ref{fig:moduli}.
Images created with XLab~\cite{xlab}.} \label{fig:delaunay2}
\end{figure}




\section{Analogues of Smyth surfaces in $\mathbb{R}^{2,1}$}
\label{section5-wayne}
A generalization of Delaunay surfaces in $\R^3$ was studied by B. Smyth in \cite{smyth}.
These are constant mean curvature surfaces whose metrics are invariant under rotations.
They were also studied by Timmreck et al. in \cite{FerPT}, where they were shown
to be properly immersed (a property which we will see does not hold for the 
analogue in $\real^{2,1}$).  The DPW approach was applied in \cite{DorPW} and
\cite{bi}.

Here we use the DPW method to construct the analogue of Smyth  surfaces 
 in $\R^{2,1}$, and describe some of their properties.  Define 
\begin{equation}\label{smyth-potential} 
\xi = \lambda^{-1} \bbar 0 & 1\\  c z^k  & 0 \ebar dz \; , \;\;\; c \in 
\mathbb{C} \; , \;\;\; z \in \Sigma = \C \; , \end{equation} 
and take the solution $\phi$ of $d\phi = \phi \xi$ with 
$\phi |_{z=0}=I$.  If $k=0$ and $c \in \mathbb{S}^1$, then 
one can explicitly split $\phi$ as in Example 
\ref{exa:cylinders}, and the 
resulting CMC surface is a cylinder over a hyperbola whose axis depends on the choice 
of $c$.  When $c=0$, one produces a two-sheeted hyperboloid.  
However, when $c \not\in \mathbb{S}^1 \cup \{ 0 \}$ or when $k>0$, 
Iwasawa splitting of $\phi$ is not so simple.  

Changing 
$c$ to $c e^{i \theta_0}$ for any $\theta_0 \in \mathbb{R}$ only 
changes the resulting surface 
by a rigid motion and a reparametrization $z\to
ze^{-\frac{i\theta_0}{k+2}}$.  
So without loss of generality we assume that 
$c \in \mathbb{R}^+ := \real \cap (0,\infty)$.  

\begin{lemma}\label{lm:smyth}
The surfaces $f: \Sigma^\circ = \phi^{-1}(\mathcal{B}_{1,1}) \to \real^{2,1}$,
produced via the DPW method, from $\xi$ 
in \eqref{smyth-potential}, with $\phi|_{z=0}=\id$ and 
$\lambda_0=1$, have reflective symmetry with respect to $k+2$ geodesic 
planes that meet equiangularly along a geodesic line.  
\end{lemma}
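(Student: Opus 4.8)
The plan is to exploit the symmetries of the potential $\xi$ in \eqref{smyth-potential} under the two basic "scaling" and "reflection" transformations of the $z$-plane, and to trace these through the DPW construction: $\phi \mapsto$ Iwasawa splitting $\mapsto$ Sym-Bobenko formula. First I would identify the symmetry group of the profile curve of the potential. Writing $\zeta = z e^{2\pi i/(k+2)}$, one checks that $\xi(\zeta) = \Lambda^{-1}\xi(z)\Lambda$ for a suitable constant $\lambda$-dependent diagonal matrix $\Lambda = \operatorname{diag}(\mu,\mu^{-1})$ (with $\mu$ a root of unity times a power of $e^{i\pi/(k+2)}$), since rotating $z$ by $2\pi/(k+2)$ multiplies $cz^k\,dz$ by $e^{2\pi i(k+1)/(k+2)}$ and $dz$ (hence the upper-right entry) by $e^{2\pi i/(k+2)}$, and these two phases can be absorbed into conjugation by a constant diagonal matrix together with a $\lambda$-independent gauge. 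Similarly, complex conjugation combined with the reflection $z \mapsto \bar z$ (using $c\in\real^+$) relates $\xi$ to its image under $\tau$-type conjugation. The key point is that both transformations act on the potential by constant (in $z$) gauge transformations.

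Next I would push these symmetries through the solution $\phi$ of $d\phi = \phi\xi$ with $\phi(0)=\id$: because the transformation of $\xi$ is by a constant gauge, the transformed $\phi$ differs from $\phi$ only by left multiplication by a constant loop (determined by matching the initial condition at $z=0$). Explicitly, if $\xi$ pulled back under $z\mapsto \zeta$ equals $\Lambda^{-1}\xi\Lambda$, then $\phi(\zeta) = C\,\phi(z)\,\Lambda$ for a constant loop $C$. Then I would feed this into the Sym-Bobenko formula via Theorem \ref{dpwthm}: the $\hh$-factor $F$ transforms by left multiplication by a constant loop in $\hh$ (and the right $\Lambda$ factor, being in $\hh^0$ up to an element of $\hh^\C_+$, is absorbed using the invariance in item \ref{symthm3} of Theorem \ref{symthm} and Theorem \ref{symthm}). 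A left constant-loop multiplication $F\mapsto C F$ with $C\in\hh$, evaluated at $\lambda_0=1$, induces an isometry of $\R^{2,1}$ on the surface $\hat f^1$ (as in the discussion in Section \ref{section2}: $\sym(CF) = \Ad_{C(1)}\sym(F) + \text{const}$). For the rotational transformation this isometry is a rotation of order $k+2$ about a fixed geodesic; for the conjugation transformation it is a reflection. Combining, the $k+2$ reflections $z\mapsto e^{2\pi i j/(k+2)}\bar z$ produce $k+2$ reflective isometries of the surface, all fixing the common geodesic axis (the image of the real axis), and the angle between successive reflection planes is $\pi/(k+2)$.

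The main obstacle I expect is bookkeeping the $\lambda$-dependence carefully: the constant matrix $\Lambda$ arising from the $z$-rotation is genuinely $\lambda$-dependent (it lives in $\hh^0$ only after the twisting, and one must check it is diagonal constant-in-$z$ so it can be absorbed on the right of the Iwasawa splitting), and one must verify that the resulting $C(1)$ (the value at the Sym point) is a \emph{real} isometry, i.e. lies in $SU_{1,1}$ (or its double $G$), so that conjugation by it genuinely acts on $\R^{2,1}\cong\su_{1,1}$. This amounts to checking $\tau(C)=\pm C$, which follows from applying $\tau$ to the relation $\phi(\zeta)=C\phi(z)\Lambda$ together with the reality structure of $\xi$; once this is in hand, identifying the order-$(k+2)$ rotation and the reflections, and that they share a common fixed geodesic, is a short computation with $2\times 2$ matrices. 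A secondary point is that these symmetries of the surface are only defined on $\Sigma^\circ$ and permute its connected structure; but since the symmetries of $\Sigma$ preserve $\phi^{-1}(\mathcal{B}_{1,1})$ (as $\mathcal{B}_{1,1}$ is invariant under the relevant left and right multiplications), the statement descends to $\Sigma^\circ$ without trouble.
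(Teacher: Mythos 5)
Your proposal is correct and follows essentially the same route as the paper: the potential is equivariant under the dihedral symmetries of the $z$-plane via constant diagonal gauges, this propagates to $\phi$ through the initial condition, then to the Iwasawa factors by uniqueness, and finally to ambient isometries via the Sym--Bobenko formula (the paper simply treats the $k+2$ reflections $R_\ell(z)=e^{2\pi i\ell/(k+2)}\bar z$ in one stroke rather than factoring them as rotation composed with $z\mapsto\bar z$, and the anti-holomorphic symmetry also involves $\lambda\mapsto\bar\lambda$, which is harmless at $\lambda_0=1$). One of your anticipated obstacles dissolves: the conjugating matrix $A_\ell=\operatorname{diag}(e^{\pi i\ell/(k+2)},e^{-\pi i\ell/(k+2)})$ is $\lambda$-independent and unitary diagonal, hence lies in $\hh^0$ and is absorbed exactly as you describe.
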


\begin{proof}
Consider the reflections 
\bdm
R_\ell(z)=e^{\frac{2 \pi i \ell}{k+2}} \bar z,
\edm
 of the domain 
$\Sigma = \mathbb{C}$, for $\ell \in \{0,1,...,k+1\}$. In the coordinate
$w := R_\ell (z)$, we have 
\bdm
\xi = A_\ell \Big ( \lambda^{-1} \bbar 0 & 1 \\ c \bar{w}^k & 0 \ebar \dd \bar{w} \Big ) A_\ell^{-1}, 
   \hspace{1cm} A_\ell = \text{diag}(e^{\frac{\pi i \ell}{k+2}},
e^{\frac{-\pi i \ell}{k+2}}).
\edm
Comparing this with (\ref{smyth-potential}), it follows that 
$\phi (z) = A_\ell \phi(\bar{w}) A_\ell^{-1}$, and hence 
\bdm
\phi (R_\ell (z)) = A_\ell \phi(\bar{z}) A_\ell^{-1}.
\edm
It is easy to see that this relation extends to the factors $F$ and $B$ in
the Iwasawa splitting $\phi = FB$, and so we have a frame $F$ which satisfies
\bdm
F(R_\ell(z)) = A_\ell F(\bar{z}) A_\ell^{-1}.
\edm
Since we have assumed $c \in \real^+$, it follows from the form of $\xi$ and
the initial condition for $\phi$ that
$\phi(\bar{z}, \lambda) = \overline{\phi(z, \bar{\lambda})}$.
This symmetry also extends to the factors $F$ and $B$, and combines with
the first symmetry as: $F(R_\ell(z), \lambda) = A_\ell \overline{F(z, \bar{\lambda})}A_\ell^{-1}$.   
Inserting this into \eqref{symformula}, we have
\[ \hat f^\lambda(R_\ell(z)) 
 = -A_\ell \overline{\hat f^{\bar \lambda}(z)} A_\ell^{-1} \; . \] 
Then for $\hat f^1$, 
the transformation $\hat f^1 \to -\overline{\hat f^1}$ 
represents reflection across the plane $\{x_2=0\}$ of 
$\mathbb{R}^{2,1} = \{ x_1e_1+x_2e_2+x_0e_3 \}$, 
and conjugation by $A_\ell$ represents a rotation by angle $2\pi i\ell /(k+2)$ 
about the $x_0$-axis. 
\end{proof}


\begin{figure}[ht]
\label{fig:smyth}
\centering
\includegraphics[height=4cm]{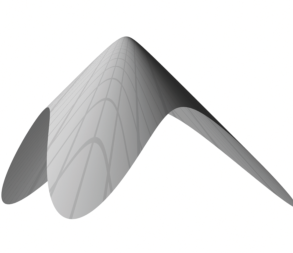} \hspace{.2cm}
\includegraphics[height=4cm]{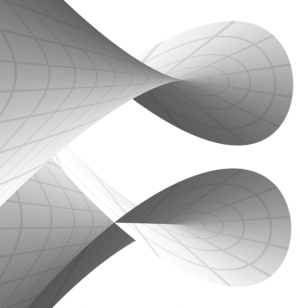} \hspace{.2cm}
\includegraphics[height=4cm]{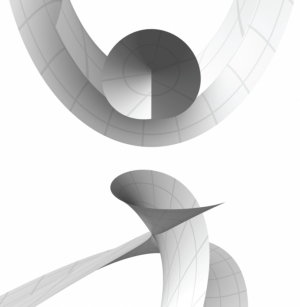}

\caption{Details of Smyth surface analogs in $\R^{2,1}$.
An immersed portion of this surface (left) has three-fold ambient
rotational symmetry, Lemma~\ref{lm:smyth}.
A singularity further out on the surface appears to be a
swallowtail (second image). The third image shows another singularity on the same
surface.}
\label{fg:1}
\end{figure}

We now show that $u: \Sigma^\circ  \to \mathbb{R}$ 
in the metric \eqref{eqn:dssquared} 
of the surface resulting from the frame $F$ 
is constant on each circle of radius $r$ centered at 
the origin in $\Sigma$, that is, 
$u=u(r)$ is independent of $\theta$ in $z=r e^{i \theta}$.  
Having this internal rotational symmetry of the metric (without actually 
having a surface of revolution) is what defines the surface as 
an analogue of a Smyth surface.  

\begin{proposition}
\label{prop:Smyth rotational symmetry}
The solution $u$ of the Gauss equation 
\eqref{compatibility1} for a surface generated 
by $\xi$ in \eqref{smyth-potential}, with 
$\phi |_{z=0}=I$, is rotationally symmetric.  That 
is, $u$ depends only on $|z|$.
\end{proposition}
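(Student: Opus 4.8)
The plan is to upgrade the finite reflective symmetry of Lemma~\ref{lm:smyth} to a continuous rotational symmetry, by exhibiting a gauge symmetry of the potential $\xi$ of \eqref{smyth-potential} under the rotations $\mu_\theta(z) := e^{i\theta}z$ of $\Sigma = \C$. First I would record, by a direct substitution, the identity
\[
D_\theta^{-1}\,\bigl(\mu_\theta^{\ast}\xi\bigr)\,D_\theta
 \;=\; e^{i(k+2)\theta/2}\,\lambda^{-1}\bbar 0 & 1 \\ c\,z^k & 0 \ebar \dd z
 \;=\; \xi\bigl(z,\; e^{-i(k+2)\theta/2}\lambda\bigr),
\qquad
D_\theta := \mathrm{diag}\bigl(e^{-ik\theta/4},\,e^{ik\theta/4}\bigr),
\]
of $\lambda$--dependent $\mathfrak{sl}_2\C$--valued $1$--forms on $\C$, where $\xi(z,\mu)$ denotes $\xi$ with $\lambda$ replaced by $\mu$; note that $D_\theta$ is a constant element of $\hh^{0}$. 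This is the $\R^{2,1}$ analogue of the computation behind the internal symmetry of Smyth surfaces in $\R^{3}$.

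Next I would transfer this identity to $\phi$. Since $\Sigma = \C$ is simply connected and $\phi$ solves $\dd\phi = \phi\,\xi$ with $\phi|_{z=0} = \id$, the two maps $z \mapsto D_\theta^{-1}\phi(e^{i\theta}z,\lambda)D_\theta$ and $z \mapsto \phi(z,\, e^{-i(k+2)\theta/2}\lambda)$ satisfy the same linear system $\dd(\,\cdot\,) = (\,\cdot\,)\,\xi(z,\, e^{-i(k+2)\theta/2}\lambda)$ and agree at $z = 0$; uniqueness of solutions of linear ODEs then yields, for all $\theta \in \R$, $z \in \C$ and $\lambda \in \SSS^1$,
\[
\phi\bigl(e^{i\theta}z,\lambda\bigr) \;=\; D_\theta\,\phi\bigl(z,\; e^{-i(k+2)\theta/2}\lambda\bigr)\,D_\theta^{-1}.
\]

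The third step, which I expect to be the only genuinely delicate point, is to push this through the $SU_{1,1}$--Iwasawa splitting. The operations appearing on the right are conjugation by the constant diagonal element $D_\theta \in \hh^{0}$ and the loop--parameter rotation $\lambda \mapsto e^{-i(k+2)\theta/2}\lambda$. The first preserves each of $\hh$, $\hh^\C_+$, $\uhat$ and the big cell $\mathcal{B}_{1,1}$, and fixes the constant term at $\lambda = 0$, since it is conjugation by a constant real element and diagonal matrices commute. For the second one must check that it likewise preserves these objects even though it is not one of the structural involutions: it maps $\SSS^1$ and $D_{\pm}$ onto themselves, and on $\SSS^1$ the involution $\tau$ acts pointwise (because $\bar\lambda^{-1} = \lambda$ there) while $\sigma$ acts through $\lambda \mapsto -\lambda$, and both of these commute with $\lambda \mapsto e^{-i(k+2)\theta/2}\lambda$; hence it preserves $\hh$, $\hh^\C_\pm$, $\uhat$ and $\mathcal{B}_{1,1}$, and fixes $0$. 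Therefore $\Sigma^{\circ} = \phi^{-1}(\mathcal{B}_{1,1})$ is $\mu_\theta$--invariant, and if $\phi(z,\cdot) = F(z,\cdot)B(z,\cdot)$ is the normalized splitting, with $B(z,\cdot) \in \uhat$ and $B(z,0) = \mathrm{diag}(\rho(z),\rho(z)^{-1})$, $\rho(z) > 0$, then
\[
\phi\bigl(e^{i\theta}z,\cdot\bigr) \;=\; \Bigl[D_\theta\,F\bigl(z,\, e^{-i(k+2)\theta/2}\,\cdot\bigr)\,D_\theta^{-1}\Bigr]\,\Bigl[D_\theta\,B\bigl(z,\, e^{-i(k+2)\theta/2}\,\cdot\bigr)\,D_\theta^{-1}\Bigr]
\]
is again a splitting of the same type, with constant term $\mathrm{diag}(\rho(z),\rho(z)^{-1})$. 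By the uniqueness clause of Theorem~\ref{section1-thm:SU11Iwa} it is the normalized splitting of $\phi(e^{i\theta}z,\cdot)$, so $\rho(e^{i\theta}z) = \rho(z)$ for all $\theta$; i.e.\ $\rho$ depends only on $|z|$.

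Finally, since $a_{-1} \equiv 1$ for \eqref{smyth-potential}, the conformal coordinate produced by the construction of Theorem~\ref{dpwthm} is $w = \tfrac{i}{H}z$ up to an additive constant, so by Remark~\ref{metricremark} the conformal factor in \eqref{eqn:dssquared} satisfies $e^{2u} = \rho^{4}$ (up to the constant $H^{-2}$ if one insists on the coordinate $z$). Hence $u$, the solution of the Gauss equation \eqref{compatibility1} for this surface, equals $2\log\rho$ up to an additive constant, and therefore depends only on $|z|$, as claimed.
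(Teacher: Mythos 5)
Your proposal is correct and follows essentially the same route as the paper: the identity $\phi(e^{i\theta}z,\lambda)=D_\theta\,\phi(z,e^{-i(k+2)\theta/2}\lambda)\,D_\theta^{-1}$ is exactly the paper's relation $\phi(\tilde z,\tilde\lambda)=L\phi(z,\lambda)L^{-1}$ with $L=D_\theta$ and $\tilde\lambda=e^{i(k+2)\theta/2}\lambda$, and both arguments then invoke uniqueness of the normalized Iwasawa splitting to conclude that $\rho$, hence $u=2\log\rho$, is invariant under $z\mapsto e^{i\theta}z$. If anything, your version is slightly more careful than the paper's in verifying that the loop-parameter rotation and conjugation by $D_\theta\in\hh^0$ preserve $\hh$, $\uhat$ and $\mathcal{B}_{1,1}$, a point the paper leaves implicit.
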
 

\begin{proof}
Define 
\bdm \tilde z =e^{i\theta}z,  \hspace{1cm} \tilde{\lambda}=
e^{i\theta}e^{\frac{i\theta k}{2}}\lambda,
\edm 
for any fixed $\theta \in{\mathbb{R}}$.  Then 
\bdm
\xi(z,\lambda) = L^{-1} \, \Big (  \tilde{\lambda}^{-1}  \bbar 0 & 1 \\ c \tilde{z}^k & 0 \ebar \dd \tilde{z} \Big )  \, L, \hspace{1cm} L = \bbar e^{\frac{-i k \theta}{4}} & 0 \\
    0 & e^{\frac{i k \theta}{4}} \ebar.
\edm  
It follows that
\bdm
\phi (\tilde z,\tilde{\lambda})=L\,\phi(z,\lambda)\,L^{-1}.
\edm  
Let $\phi = F B$ be the normalized Iwasawa splitting of $\phi$, with
$B: \Sigma^\circ \to \uhat$.  Then 
\beqas
\phi (\tilde z,\tilde{\lambda})&=&(L F(z,\lambda) L^{-1})\cdot(L B(z,\lambda) L^{-1})\\
 &=& F(\tilde z, \tilde \lambda) \cdot B(\tilde z, \tilde \lambda).
\eeqas
Since $L B(z,\lambda) L^{-1}$ and $B(\tilde z, \tilde \lambda)$ are both loops
in $\uhat$, and the  left factors are both loops in $\hh$, it follows by 
uniqueness that
the corresponding factors are equal.  Recall from Section \ref{section3-wayne} 
that $u = 2 \log \rho$ is determined by the function $\rho(z)$, which is the
first component of the diagonal matrix $B(z) \big|_{\lambda = 0}$. Since this
matrix is diagonal and independent of $\lambda$, we have
just shown that $B(z) \big|_{\lambda = 0} = B(\tilde z) \big|_{\lambda = 0}$,
and hence $u(\tilde z) = u(z)$. 
\end{proof}
 
We now show that the Gauss equation for these surfaces 
in $\R^{2,1}$ is a special 
case of the Painleve III equation.  This was proven for 
Smyth surfaces in $\R^3$, in \cite{bi}. 

\begin{proposition}
\label{prop:Painleve III}
The Gauss equation \eqref{compatibility1} for a 
surface generated by $\xi$ in \eqref{smyth-potential}, with 
$\phi |_{z=0}=I$, 
is a special case of the Painleve III equation.
\end{proposition}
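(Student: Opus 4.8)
The plan is to exploit the rotational symmetry of $u$ already established in Proposition~\ref{prop:Smyth rotational symmetry} to reduce the Gauss equation \eqref{compatibility1} to an ordinary differential equation in the radial variable, and then to transform that ODE into the standard Painlev\'e III equation by an explicit substitution of both variables whose exponents are dictated by $k$.

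First I would spell out the DPW correspondence of Section~\ref{section3-wayne} for the potential \eqref{smyth-potential}. Here $a_{-1}\equiv 1$ and $b_{-1}=cz^{k}$, so the conformal parameter of the induced surface is $w=\tfrac{i}{H}z$ (with $w(0)=0$), the mean curvature is the constant $H$, the metric is $4e^{2u}\abs{dw}^{2}$ with $u$ solving \eqref{compatibility1} in the variable $w$, and the Hopf differential is $Q\,dw^{2}$ with $Q=-2Hc(-iH)^{k}w^{k}$; in particular $\abs{Q}=C\,r^{k}$, where $r:=\abs{w}$ and $C:=2c\,\abs{H}^{k+1}>0$, using the earlier normalization $c\in\R^{+}$. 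By Proposition~\ref{prop:Smyth rotational symmetry}, $u$ depends only on $\abs{z}$, hence only on $r$; writing $u=u(r)$ and using $u_{w\bar w}=\tfrac14\bigl(u_{rr}+\tfrac1r u_{r}\bigr)$, equation \eqref{compatibility1} becomes
\[
u_{rr}+\tfrac1r u_{r}=4H^{2}e^{2u}-C^{2}r^{2k}e^{-2u},\qquad r>0 .
\]

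Next I would substitute
\[
t=\tfrac{2}{k+2}\,r^{(k+2)/2},\qquad y=r^{-k/2}e^{u},
\]
which is legitimate because $t$ is a smooth strictly increasing function of $r>0$, so that $y$ is a well-defined function of $t$. With $\tfrac{d}{dr}=r^{k/2}\tfrac{d}{dt}$ and $e^{u}=r^{k/2}y$, feeding these into the radial equation is a routine chain-rule computation: the weight $-k/2$ makes the two terms on the right carry the common factor $r^{k}$, the power $(k+2)/2$ matches that factor to the leading part of $\tfrac{d^{2}}{dr^{2}}$ expressed in $t$, so the equation becomes $r$-free after dividing through, and the accumulated first-order terms collapse to $-\tfrac1t\tfrac{dy}{dt}$. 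The result is
\[
\frac{d^{2}y}{dt^{2}}=\frac1y\left(\frac{dy}{dt}\right)^{2}-\frac1t\frac{dy}{dt}+4H^{2}y^{3}-\frac{C^{2}}{y},
\]
which is Painlev\'e III with parameters $(\alpha,\beta,\gamma,\delta)=(0,\,0,\,4H^{2},\,-C^{2})$; since $\gamma>0$ and $\delta<0$, a further constant rescaling of $t$ and $y$ normalizes $\gamma=1$, $\delta=-1$. Hence the Gauss equation for these surfaces is exactly the case $\alpha=\beta=0$ of Painlev\'e III.

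There is no real obstacle here beyond bookkeeping: the point that must be checked carefully is that $\abs{Q}$ is genuinely a constant multiple of $r^{k}$ (so that the nonlinearity has the right homogeneity), and that the exponents $-k/2$ and $(k+2)/2$ are the unique pair that eliminates all explicit dependence on $r$ — it is precisely this cancellation that produces the Painlev\'e III structure. The remaining chain-rule manipulation is mechanical, and the case $k=0$ (where $t=r$, $y=e^{u}$) is included automatically.
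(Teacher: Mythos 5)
Your proposal is correct and follows essentially the same route as the paper: use the rotational symmetry of $u$ to reduce the Gauss equation to a radial ODE, then apply the change of variables with exponents $(k+2)/2$ in the independent variable and $-k/2$ in $e^u$ to remove the explicit $r$-dependence. The only cosmetic difference is that the paper first normalizes $H=1/2$ and passes through the intermediate form $v_{\mu\mu}+\mu^{-1}v_\mu-2\sinh(2v)=0$ before identifying it with Painlev\'e III via $y=e^v$, whereas you keep $H$ and the constant $C$ general and land directly on the Painlev\'e III form with $(\alpha,\beta,\gamma,\delta)=(0,0,4H^2,-C^2)$.
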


\begin{proof}
The Painleve III equation, for constants 
$\alpha, \beta, \gamma, \delta$, is 
\bdm
y^{\prime\prime}=y^{-1}(y^{\prime})^2-x^{-1}y^\prime + 
    x^{-1}(\alpha y^2+\beta)+\gamma y^3 + \delta y^{-1},
 \edm  
 where $\prime$ denotes the derivative with respect to $x$.
Setting 
$y=e^v$, $\alpha=\beta=0$, $\gamma=-\delta=1$, 
we have 
$(v^{\prime}e^v)^\prime=e^{-v} {(v^{\prime}e^v)}^2 -
x^{-1} v^{\prime}e^v +
 0+e^{3v}-e^{-v}$.  
Therefore
\begin{equation} 
 v^{\prime \prime}+x^{-1} 
 v^{\prime} - 2\sinh (2v)=0 \label{eq:Painleve III}
 \end{equation}
is a particular case of the Painleve III equation.
 
By a homothety 
and/or reflection, we may assume 
the surface has $H = 1/2$, and then we 
have $Q=-cz^k$.  (By Section 
\ref{section3-wayne}, $Q= - 2 H b_{-1}/a_{-1}$.)  
Setting $r:= |z|$, the Gauss equation becomes 
\begin{equation}
\label{eq:Gauss of Smyth}
4u_{z\bar z}+c^2 \,r^{2k} e^{-2u}-e^{2u}=0\; .
\end{equation}

To prove this proposition,\ we show that Equation 
\eqref{eq:Gauss of Smyth} can be written 
in the form \eqref{eq:Painleve III}.  Set 
\[ v:=u-\tfrac{1}{2}\log |Q|=u-\tfrac{1}{2}\log c-
\tfrac{k}{2}\log r \; , \]
so
$4v_{z\bar z}+\tfrac{k}{2}(\log r)_{z\bar z}+
c^2\,r^{2k}\,
e^{-2(v+\frac{1}{2}\log c+\frac{k}{2}\log {r})}-
e^{2(v+\frac{1}{2}\log c+
\frac{k}{2}\log {r})}=0$, and this simplifies to
\bdm
4v_{z\bar z}-2c\,r^k\,\sinh (2v)=0.
\edm  
Now $v$ is a function of $r$ only, which means that 
$v_{z \bar{z}} = \frac{1}{4}(v^{\prime \prime}(r) + \frac{1}{r} v^\prime(r))$, 
and the equation becomes
\bdm
v^{\prime \prime}(r) +r^{-1} v^\prime(r) - 
2c\,r^k\,\sinh (2v)=0.
\edm  Now set
\bdm
\mu:=(1+\frac{k}{2})^{-1} r^{1+\frac{k}{2}} \,\sqrt{c}.
\edm  
Then
$\partial_{r}\mu=r^{\frac{k}{2}}\,\sqrt{c}$.  
So we have 
\[ \partial_{r}(\partial_\mu v \,r^{\frac{k}{2}}\sqrt{c})+
r^{-1}\partial_\mu v 
\,r^\frac{k}{2}\sqrt{c}-2c\,r^k\,\sinh (2v)
=0\; , \] which simplifies to
$v_{\mu\mu}+\mu^{-1} \,v_\mu - 2\sinh (2v)=0$, 
coinciding with  \eqref{eq:Painleve III}.
\end{proof}

\providecommand{\bysame}{\leavevmode\hbox to3em{\hrulefill}\thinspace}
\providecommand{\MR}{\relax\ifhmode\unskip\space\fi MR }
\providecommand{\MRhref}[2]{%
  \href{http://www.ams.org/mathscinet-getitem?mr=#1}{#2}
}
\providecommand{\href}[2]{#2}

\end{document}